\def\MR#1{}
\newcommand{\kibitz}[2]{\ifnum\Comments=1\textcolor{#1}{#2}\fi}
\theoremstyle{plain}
\newtheorem{thm}{Theorem}[section]
\newtheorem{lemma}{Lemma}[section]
\newtheorem{prop}[lemma]{Proposition}
\theoremstyle{definition}
\theoremstyle{remark}
\newtheorem{remark}[lemma]{Remark}
\numberwithin{equation}{section}
\def\esmath{\ensuremath}
\def\phv{\ensuremath\varphi}
\def\RR{\esmath\mathbb R} %real numbers
\newcommand{\p}{\partial}
\newcommand{\bn}{\begin{enumerate}}
\newcommand{\en}{\end{enumerate}}
\newcommand{\bi}{\begin{itemize}}
\newcommand{\ei}{\end{itemize}}
\newcommand{\bqq}{\begin{eqnarray*}}
\newcommand{\eqq}{\end{eqnarray*}}
\newcommand{\balg}{\begin{align*}}
\newcommand{\ealg}{\end{align*}}
\DeclareMathOperator{\Rm}{Rm}
\begin{document}

%%%%%%%%%%%%%%%%%

\title[Noncompact MCF with Type-II curvature blow-up. II]{Mean curvature flow of noncompact hypersurfaces with Type-II curvature blow-up. II}

\author{James Isenberg}
\address{Department of Mathematics, University of Oregon, Eugene, OR 97403, USA}
\email{isenberg@uoregon.edu}

\author{Haotian Wu}
\address{School of Mathematics and Statistics, The University of Sydney, NSW 2006, Australia}
\email{haotian.wu@sydney.edu.au}

\author{Zhou Zhang}
\address{School of Mathematics and Statistics, The University of Sydney, NSW 2006, Australia}
\email{zhou.zhang@sydney.edu.au}

\date{\usdate\today} 

\keywords{Mean curvature flow; noncompact hypersurfaces; Type-II curvature blow-up; precise asymptotics.}

\subjclass[2010]{53C44 (primary), 35K59 (secondary)}

%%%%%%%%%%%%%%%%%
%\linenumbers
%%%%%%%%%%%%%%%%%

\begin{abstract}
We continue the study, initiated by the first two authors in \cite{IW19}, of Type-II curvature blow-up in mean curvature flow of complete noncompact embedded hypersurfaces. In particular, we construct mean curvature flow solutions, in the rotationally symmetric class, with the following precise asymptotics near the ``vanishing'' time $T$: (1) The highest curvature concentrates at the tip of the hypersurface (an umbilical point) and blows up at the rate $(T-t)^{-1}$. (2) In a neighbourhood of the tip, the solution converges to a translating soliton known as the bowl soliton. (3) Near spatial infinity, the hypersurface approaches a collapsing cylinder at an exponential rate.
\end{abstract}

\maketitle

\section{Introduction}\label{sec:intro}

This paper continues the investigation by the first two authors \cite{IW19} concerning the Type-II curvature blow-up in mean curvature flow (MCF) of \emph{noncompact} hypersurfaces embedded in Euclidean space.

Given a one-parameter family of embeddings (or more generally, immersions) $\phv(t):M^n\to\mathbb{R}^{n+1}$, $t_0<t<t_1$, of $n$-dimensional hypersurfaces in the Euclidean space, MCF is defined by the following evolution equation
\begin{align}\label{eq:mcf}
\partial_t\phv (p,t) = \vec H,\quad p\in M^n,\quad t_0\leqslant t<t_1.
\end{align}
which geometrically deforms the hypersurface in the direction of its mean curvature vector $\vec H$.

In local coordinates, the MCF equation \eqref{eq:mcf} is a (weakly) parabolic PDE system whose short-time existence and uniqueness is well-known. Although the flow has smoothing property in short time, it can develop singularities over larger time scales for many initial data. For example, under MCF and in finite time, any closed convex hypersurface develops a ``spherical singularity'' \cite{Hui84}, whereas hypersurfaces close to a round cylinder develop a ``cylindrical singularity'' \cite{GS09}.

Let $M_t:=\phv(t)(M^n)$ be the hypersurface under MCF at time $t$ and $h(p,t)$ the second fundamental form of $M_t$ at $p$. Suppose MCF of $M_t$ becomes singular at time $t=T<\infty$. Then this finite-time singularity is called \emph{Type-I} if
\begin{align*}
\sup\limits_{p\in M_t} \vert h(p,t)\vert(T-t)^{1/2} \leqslant C
\end{align*}
for some finite constant $C$, and it is called \emph{Type-II} (more precisely, Type-IIa\footnote{A MCF solution is said to be \emph{Type-IIb} if it exists for $t\in[t_0,\infty)$ and $\sup_{M_t}\vert h(\cdot,t)\vert$ blows up at a rate faster than $Ct^{-1/2}$ for some constant $C$. We study the precise asymptotics of Type-IIb MCF solutions elsewhere \cite{IWZ19b}.}) if $\sup_{M_t}\vert h(\cdot,t)\vert$ blows up at a faster rate.

Examples of Type-I MCF solutions are plentiful in all dimensions. For example, MCF of any closed embedded curve in the plane always becomes convex \cite{Gray87} and then forms a Type-I round singularity \cite{GH86}. In dimension two or higher, typical Type-I examples include the round sphere, the round cylinder, and hypersurfaces in suitable open sets around them \cite{Hui84,GS09}. In contrast, MCFs which develop Type-II singularities are more difficult to specify and are typically expected to appear if the behaviour of the flow undergoes a ``phase change''. To explain what we mean by such a phase change, we consider the following scenarios.

Consider a one-parameter family of rotationally symmetric n-spheres ($n\geqslant 2$) embedded in $\mathbb{R}^{n+1}$ with the parameter controlling the extent to which the equator is tightly cinched. Depending on the amount of cinching, MCF starting from a 2-sphere in this family has the following behaviours:
(i) For very loose cinching, the flow converges to the shrinking round sphere with its usual (global) Type I singularity \cite{Hui84}.  (ii) For very tight cinching, the equator shrinks more rapidly than the two ``dumbbell'' hemispheres,  and forms a (local) Type-I ``neckpinch'' modelled locally by a cylinder \cite{Hui90}. Scenarios (i) and (ii) represent different behaviours of MCF. As the cinching parameter varies from ``very loose'' to ``very tight'', we expect: (iii) at some ``threshold'' parameter in between, MCF forms a finite-time singularity that is not Type-I, and hence Type-II. The existence of scenario (iii) is justified by Angenent, Altschuler and Giga \cite{AAG}. The quantitative precise asymptotics for such Type-II solutions have been obtained by Angenent and Vel\'{a}zquez in \cite{AV97}.

For each integer $m\geqslant 3$, Angenent and Vel\'{a}zquez \cite{AV97} construct a (mean convex) rotationally symmetric MCF on an $n$-sphere ($n\geqslant 2$) (centred at the origin) shrinking to a point (the origin) in a ``non-convex'' fashion in finite time $T$. If $m$ is even, the solution has reflexive symmetry across the equator and corresponds to the aforementioned scenario (iii); if $m$ is odd, the solution looks like an asymmetric ``dumbbell'' and we refer the reader to \cite{AV97}.

The geometric-analytic features of an Angenent-Vel\'{a}zquez solution can be summarized as follows: (1) At each pole (the ``tip'') of the sphere (an umbilical point), the curvature blows up at the Type-II rate $(T-t)^{-(1-1/m)}$ and the singularity model there is the bowl soliton, which is the unique (up to rigid motion) translating soliton that is rotationally symmetric and strictly convex \cite{Has15}. (2) Near the equator (the ``neck''), the curvature blows up at the Type-I rate and the singularity model there is the shrinking soliton. (3) Between each pole and the equator, the solution is approximately given by rotating the profile (with the $x$-axis being the axis of rotation) $u^2+Kx^m = 2(n-1)(T-t)$, $t\in[t_0,T)$, for some positive constant $K$. These examples are all believed to be ``rare'', as is reflected by the fact that their Type-II curvature blow-up rates are discrete and quantized. Indeed, by the fundamental work of Colding and Minicozzi \cite{CM12}, we know these solutions are non-generic. We note that, integrating $(T-t)^{-(1-1/m)}$ in $t$, the tip moves by a finite distance over the time interval $[t_0,T)$.

Having established the existence of compact MCF solutions with Type-II curvature blow-up, it is natural to seek \emph{noncompact} counterparts, as first realized in \cite{IW19} by the first two authors of this paper. More precisely, for each real number $\gamma>1/2$, we have constructed MCF of noncompact rotationally symmetric embedded hypersurfaces that are complete convex graphs over a shrinking ball and asymptotically approach a shrinking cylinder near spatial infinity. Such a mean curvature flow solution exhibits the following behaviour near the ``vanishing'' time $T$: (1) The highest curvature, concentrated at the tip of the hypersurface (an umbilical point), blows up at the rate $(T-t)^{-(\gamma +1/2)}$ where $\gamma>1/2$, and the singularity model there is the bowl soliton. (2) Near spatial infinity, the hypersurface approaches a collapsing cylinder at a power decay rate dependent on the parameter $\gamma$. (3) Between the tip and the cylindrical end, the solution is approximately given by rotating the profile (with the $x$-axis being the axis of rotation) $u^2+Kx^{\frac{1}{1/2-\gamma}} = 2(n-1)(T-t)$, $t\in[t_0,T)$, for some positive constant\footnote{In this paper, constants $K$ and $C$ may change from line to line.} $K$ .

The Isenberg-Wu solutions and the Angenent-Vel\'{a}zquez solutions share similar geometric features---in particular, in both cases, the solutions join a translating soliton to a shrinking soliton. Yet they are different in terms of the topology of the hypersurfaces and the geometric-analytic features. In particular, the noncompact Isenberg-Wu solutions seem to be much more ``abundant'' than the compact Angenent-Vel\'{a}zquez ones, as is reflected by the fact that their Type-II curvature blow-up rates form a continuum $(1,\infty)$ and we have an open set of solutions for each $\gamma>1/2$. We note that, integrating $(T-t)^{-(\gamma +1/2)}$ in $t$ and because $\gamma>1/2$, the tip moves by an infinite distance over the finite time interval $[t_0,T)$, so the MCF solution disappears at spatial infinity at $T$, exactly when the asymptotic cylinder collapses to a line. The general behaviour, but not the precise asymptotics of such solutions is studied in \cite{SS14}.

An inspection of the Angenent-Vel\'{a}zquez solutions and the Isenberg-Wu solutions immediately raises the following question: does there exist a Type-II MCF solution with curvature blow-up rate $(T-t)^{-1}$? The existence is suggested by taking the appropriate limit of the parameter in either construction: 
\begin{align*}
\lim\limits_{m\to\infty}{(T-t)^{-1+\frac{1}{m}}} = \lim\limits_{\gamma\to\frac{1}{2}+} (T-t)^{-(\gamma+1/2)} = (T-t)^{-1}.
\end{align*}
Further motivation comes from the differences which have been observed between the Type-II solutions in \emph{Ricci flow} on compact manifolds and those seen on noncompact manifolds. On compact manifolds $\Sigma$, all the examples that  have been found \cite{AIK15} have ``quantized''  blowup rates of $\sup\limits_{x\in \Sigma}|\Rm(x,t)| \sim  (T-t)^{\frac{2}{k}-2}$ for integers $k\geqslant 3$ (here $T$ is the time of the first singularity). By contrast, for noncompact manifolds $\Sigma$, the known examples \cite{Wu14} have a continuous spectrum of blowup rates:  $\sup\limits_{x\in \Sigma}|\Rm(x,t)| \sim (T-t)^{-\lambda-1}$ for all $\lambda \geqslant 1$. The borderline Type-II rate $(T-t)^{-2}$ (letting $\lambda=1$ or $k\to\infty$) in Ricci flow can be realized on a noncompact manifold. Correspondingly, the borderline Type-II rate $(T-t)^{-1}$ in MCF can be expected on a noncompact hypersurface. In this paper, we confirm this expectation.

Following the set up in \cite{IW19}, in this paper we consider mean curvature flow of rotationally symmetric hypersurfaces embedded in Euclidean space. For any point $(x_0,x_1,\ldots,x_n)\in\mathbb{R}^{n+1}$ for $n\geqslant 2$, we write
\begin{align*}
x = x_0, \quad r = \sqrt{x_1^2 + \cdots + x_n^2}.
\end{align*}
A noncompact hypersurface $\Gamma$ is said to be rotationally symmetric if
\begin{align*}
\Gamma = \left\{(x_0,x_1,\ldots,x_n) : r = u(x), a\leqslant x < \infty \right\}.
\end{align*}
The rotational symmetry is preserved along MCF, for example, by the obvious curvature bound in our consideration and the standard uniqueness result.

We assume that $u$ is strictly concave so that the hypersurface $\Gamma$ is convex and that $u$ is strictly increasing with $u(a)=0$ and with $\lim\limits_{x\nearrow \infty}u(x) = r_0$, where $r_0$ is the radius of the cylinder. The function $u$ is assumed to be smooth, except at $x=a$. Note that this particular non-smoothness of $u$ is a consequence of the choice of the (cylindrical-type) coordinates; in fact, as seen below, if the time-dependent flow function $u(x,t)$ is inverted in a particular way, this irregularity is removed. We label the point where $u=0$ the \emph{tip} of the surface.

We focus our attention on the class of complete hypersurfaces that are rotationally symmetric, (strictly) convex\footnote{Throughout this paper, ``convex'' means ``strictly convex''.}, smooth graphs over a ball and asymptotic to a cylinder. One readily verifies that embeddings with these properties are preserved by MCF (see for example \cite{SS14}). Representing the evolving hypersurface $\Gamma_t$ by the profile of rotation, i.e., the graph of $r=u(x,t)$, then the function $u$ satisfies the PDE,
\begin{align}
\label{eq:u(x,t)}
u_t & = \frac{u_{xx}}{1+u^2_x} - \frac{n-1}{u}.
\end{align}
We introduce the following scaled time and space parameters, and the scaled profile function:
\begin{align*}
\tau  &= -\log(T-t),\\
\quad y & = x + a \log(T-t),\\
\quad \phi(y,\tau) & = u(x,t)(T-t)^{-1/2},
\end{align*}
where $a>0$ is to be chosen later.

Under the rescaled parameters, \eqref{eq:u(x,t)} for $u(x, t)$ is transformed to the following PDE for $\phi(y,\tau)$:
\begin{align}\label{eq:phi(y,tau)}
\left. \p_\tau \right\vert_y \phi & = \frac{e^{-\tau} \phi_{yy}}{1+e^{-\tau}\phi^2_y} + a \phi_y
+ \frac{\phi}{2} - \frac{(n-1)}{\phi} ,
\end{align}
where $\left. \p_\tau \right\vert_y $ means taking the partial derivative for the variable $\tau$ with respect to the coordinates $(y, \tau)$; in other words, with $y$ fixed. This notation appears repeatedly throughout this paper. We readily note that equation \eqref{eq:phi(y,tau)} admits the constant solution $\phi \equiv \sqrt{2(n-1)}$, which  corresponds to the collapsing cylinder (a shrinking soliton).

Because our hypersurface is assumed to be a complete convex graph over a ball, it is useful to invert the coordinates and work with 
\begin{align*}
y( \phi,\tau) & = y\left( \phi(y,\tau), \tau \right).
\end{align*}
This inversion can be done because the hypersurface under consideration is a convex graph over a ball. In terms of $y(\phi,\tau)$, the equation corresponding to mean curvature flow (equivalent to equation \eqref{eq:phi(y,tau)} and also \eqref{eq:u(x,t)}) is
\begin{align}
\label{eq:y(phi,tau)}
\left. \p_\tau \right\vert_{\phi} y & = \frac{y_{\phi\phi}}{1 + e^{\tau} y^2_\phi} + \left(\frac{(n-1)}{\phi} - \frac{\phi}{2} \right) y_\phi - a.
\end{align}

Our main result is the following. 
\begin{thm}
\label{thmmain}
For any choice of an integer $n\geqslant 2$ and for any real number $a>0$, there exists a family $\mathscr{G}$ of $n$-dimensional, smooth, complete noncompact, rotationally symmetric, strictly convex hypersurfaces in $\mathbb{R}^{n+1}$ such that the MCF evolution $\Gamma_t$ starting at each hypersurface $\Gamma\in\mathscr{G}$ is trapped in a shrinking cylinder,  escapes at spatial infinity while the cylinder becomes singular at $T<\infty$, and has the following precise asymptotic properties near the vanishing time $T$ of $\Gamma_t$:
\begin{enumerate}
\item The highest curvature occurs at the tip of the hypersurface $\Gamma_t$ , and it blows up at the precise Type-II rate
\begin{align}
\sup\limits_{p\in M_t}|h(p,t)| & \sim (T-t)^{-1} \quad \text{as } t\nearrow T.
\end{align}
\item Near the tip, the Type-II blow-up of $\Gamma_t$ converges to a translating soliton which is a higher-dimensional analogue of the ``Grim Reaper''
\begin{align}
y\left(e^{-\tau/2} z,\tau\right)=y(0,\tau) + e^{-\tau} \left(\frac{\tilde P\left( a z \right)}{a}+o(1)\right)\quad \text{as } \tau\nearrow \infty
\end{align}
uniformly on compact $z$ intervals, where $z=\phi e^{\tau/2}$ and $\tilde P$ is defined in equation \eqref{eq:tildeP}.
\item Away from the tip and near spatial infinity, the Type-I blow-up of $\Gamma_t$  approaches the cylinder at the rate 
\begin{align}
2(n-1)-\phi^2 & \sim e^{-y/a} \quad \text{as } y\nearrow \infty.
\end{align}
\end{enumerate}
In particular, the solution\footnote{Any such solution is unique by work of Chen and Yin \cite{CY07}} constructed has the asymptotics predicted by the formal solution described in Section \ref{formal}.
\end{thm}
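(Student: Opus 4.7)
The plan is to work with the rescaled inverted equation \eqref{eq:y(phi,tau)} and build MCF solutions by matched asymptotics in three regions: a \emph{tip region} where the solution converges to a bowl-type translating soliton, an \emph{intermediate region} where a quasi-stationary profile interpolates, and an \emph{outer region} where the surface is a small perturbation of the shrinking cylinder $\phi\equiv\sqrt{2(n-1)}$. A barrier argument together with a finite-dimensional shooting then produces the open family $\mathscr{G}$ of initial hypersurfaces with the prescribed asymptotics.

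First I would carry out the formal analysis developed in Section \ref{formal}. In the outer region I would linearise \eqref{eq:phi(y,tau)} around the constant solution $\sqrt{2(n-1)}$ and solve the resulting linear parabolic equation in the self-similar variable $y$; the translation parameter $a>0$ entering $y=x+a\log(T-t)$ is chosen so that the dominant decaying mode is exactly $e^{-y/a}$, which gives conclusion (3) and supplies the boundary data feeding into the intermediate region. In the tip region, the ansatz
\begin{align*}
y(e^{-\tau/2}z,\tau) - y(0,\tau) = e^{-\tau}\bigl(\tilde P(az)/a + o(1)\bigr)
\end{align*}
reduces \eqref{eq:y(phi,tau)} at leading order to the ODE characterising a higher-dimensional Grim-Reaper-type translator $\tilde P$; I would show this ODE has a unique convex solution with the asymptotic slope prescribed by matching to the outer profile. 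The intermediate region is a stretched parabolic layer where the linearised operator about the formal profile is uniformly parabolic away from $\phi=0$, and a standard separation of variables in $\phi$ lets me identify finitely many unstable directions transversely crossing the formal solution.

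Next I would glue the three local profiles into a global approximate solution $\bar y(\phi,\tau)$ and construct upper and lower barriers $y_\pm=\bar y\pm\eta$, where $\eta$ is a region-dependent correction scaled to dominate the formal error while remaining smaller than the leading-order profile. Because \eqref{eq:y(phi,tau)} is uniformly parabolic on any compact $\phi$-interval away from $\phi=0$ (with a controlled degeneracy at the tip governed by the bowl ODE), the maximum principle traps any MCF solution whose initial profile lies between $y_-$ and $y_+$. To extract actual flows whose asymptotics truly match the formal ones, I would run a Wa\.zewski-type topological shooting on a finite-dimensional family of initial data, parametrising the unstable modes of the linearisation in the outer region and using a degree argument to kill them, in the spirit of \cite{AV97} and \cite{IW19}; the openness of $\mathscr{G}$ then follows from the transversality of the shooting. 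With the solution pinned to the formal one, the three conclusions follow by inspection: the tip curvature rate $(T-t)^{-1}$ comes from the $e^\tau$ rescaling in the bowl layer, the convergence to $\tilde P$ on compact $z$-intervals is built into the inner expansion, and the exponential approach to the cylinder is the leading outer behaviour.

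The main obstacle is the \emph{borderline} nature of the rate $(T-t)^{-1}$. In the Isenberg--Wu construction with $\gamma>1/2$, both the translation speed of the tip and the cylindrical decay exponent are power-like in $T-t$, so the three regions are well separated on a polynomial scale and the matching eigenfunction sits strictly inside the spectrum of the linearised operator in self-similar variables. Here, with rate $(T-t)^{-1}$, the tip travels an infinite distance at only a logarithmic rate, $a$ itself becomes a free parameter rather than being fixed by the formal analysis, and the matching eigenfunction lies precisely at the edge of the spectrum. Consequently the region separations degrade from polynomial to logarithmic, and designing barriers that are simultaneously strict sub/super-solutions across all three regions — especially in the intermediate layer where both the parabolic smoothing and the soliton drift are weak — requires weighted H\"older estimates sharper than those used in the off-critical regime. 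Handling this critical matching is the heart of the proof.
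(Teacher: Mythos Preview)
Your proposal has the right architecture (formal matched asymptotics $\to$ barriers $\to$ comparison principle), but two concrete steps would fail as written, and your diagnosis of the main difficulty is off.

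\textbf{There is no shooting.} You propose a Wa\.zewski-type topological argument to ``kill finitely many unstable modes'' in the outer region. That machinery is needed in the compact Angenent--Vel\'azquez setting because the neck region generates genuine unstable Hermite directions that must be tuned out. Here the hypersurface is noncompact and asymptotic to the cylinder at infinity: there is no neck and no unstable modes to eliminate. The paper works in the variable $\lambda=-1/y$ (which compactifies the cylindrical end), builds explicit piecewise-smooth sub/supersolutions $\lambda^\pm$ in just two regions (interior $|z|\leqslant R_1$ and exterior $|\phi|\geqslant R_2e^{-\tau/2}$), patches them by $\inf/\sup$, and then simply places smooth initial data strictly between $\lambda^-(\cdot,\tau_0)$ and $\lambda^+(\cdot,\tau_0)$. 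The comparison principle traps the flow for all later times, and the family $\mathscr G$ is open because the ordering $\lambda^-<\lambda_0<\lambda^+$ is a strict inequality --- no degree theory, no transversality. The paper even remarks explicitly that the Exit Lemma and Hermite-perturbation structure from \cite{AV97} are ``not available in our case'' and are replaced by direct construction.

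\textbf{Convergence to the bowl soliton is not ``built into the inner expansion''.} Being trapped between $\lambda^\pm_{\mathrm{int}}=-A^\pm+O(\tau e^{-\tau})$ only pins the solution to the constant $-A$ at leading order; it gives \emph{no} information about the $e^{-\tau}$-scale correction, which is where the bowl profile $\tilde P$ lives. The paper extracts Item~(2) via a separate argument (Lemma~5.3): set $q=\tilde p_z$, derive a uniformly parabolic equation for $q$ in the time variable $s=e^\tau$, obtain linear bounds $c_-z\leqslant q\leqslant c_+z$ by comparison with $\tilde P'(\lambda z)$, then pass to a subsequential limit and identify it uniquely as $\tilde P'(az)$ using the boundary behaviour inherited from the exterior convergence. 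Without this step you only get the Type-II \emph{rate}, not the soliton \emph{shape}.

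Finally, your ``main obstacle'' paragraph overstates the difficulty. The borderline case does not require sharper weighted H\"older estimates or a spectral-edge analysis; the entire point is that replacing $y=x(T-t)^{\gamma-1/2}$ by $y=x+a\log(T-t)$ changes only the first-order term in \eqref{eq:phi(y,tau)} (equivalently the zeroth-order term in \eqref{eq:y(phi,tau)}), and the barrier construction of \cite{IW19} goes through with essentially the same estimates --- the exterior correction $\psi$ solves an explicit linear ODE, and the logarithmic terms that appear are handled by elementary bounds, not by any refined functional-analytic machinery.
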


Comparing with \cite{IW19}, we see that the definitions of $\tau$ and $\phi$ remain the same but that of $y$ has changed from $y=x(T-t)^{\gamma-1/2}$ in \cite{IW19} to $y= x+a\log(T-t)$ in the present paper. Indeed, to capture the borderline case $\gamma=1/2$, taking the limit $\gamma\to\frac{1}{2}$ in \cite{IW19} is insufficient. The new scaling of $y$, on the other hand, is natural because in \cite{IW19} the asymptotic cylindricality is measured precisely by $2(n-1)-\phi^2\sim y^{(1/2-\gamma)^{-1}}$ and if we let $\gamma\to 1/2$, then we expect $2(n-1)-\phi^2$ to decay faster than any arbitrarily large power of $y$; i.e., we have exponential decay in $y$, as is captured by the asymptotic property (3) of Theorem \ref{thmmain}. The new scaling of $y$ implies changes in the rescaled PDEs for MCF; e.g., equations \eqref{eq:phi(y,tau)} and \eqref{eq:y(phi,tau)}, cf. the same-numbered equations in \cite{IW19}. In particular, we note that the change occurs in the first-order term in equation \eqref{eq:phi(y,tau)}, or equivalently in the zeroth-order term in equation \eqref{eq:phi(y,tau)}. This suggests that the method of construction in \cite{IW19} is still applicable.

The proof of this theorem is based on matched asymptotic analysis and barrier arguments for nonlinear PDE. While the analysis is intricate, this method is powerful and has been successfully applied in a number of studies of Type-I and Type-II singularities which develop both in Ricci flow  \cite{AK07, AIK15, Wu14} and in MCF \cite{AV97, IW19}. The proof proceeds in the following steps: (1) By considering rotationally symmetric hypersurfaces, we reduce the MCF equation to a quasilinear  parabolic PDE for a scalar function. (2) Applying matched asymptotic analysis, we formally construct approximate solutions to the rescaled versions of this PDE. (3) For each such approximate solution, we construct subsolutions and supersolutions which, if carefully patched, form barriers for the rescaled PDE. These barriers carry information of the approximate solution for times very close to the vanishing time $T$. (4) Once we have shown (using a comparison principle) that any solution starting from initial data between the barriers does stay between them up to time $T$, and once we have determined that such initial data sets do exist, we can conclude that there are MCF solutions whose behaviours are predicted by the barriers. Near spatial infinity, the barriers give us precise measure of the asymptotic cylindricality of a solution. At the tip, the barriers have Type-II speeds (cf. Section \ref{formal}), which implies that on average any MCF solution in between is also Type-II. However, this alone does not imply the stronger convergence result of Type-II blow-up. To prove the strong convergence result stated in property (2) of the theorem, we rely on Lemma \ref{lem:type2}.

This paper is organized as follows. Section \ref{formal} describes the construction of formal solutions using the method of formal matched asymptotics. In Section \ref{super-sub}, we use these formal solutions to construct the corresponding supersolutions and subsolutions to the rescaled PDE. The supersolutions and subsolutions are ordered and patched to create the barriers to the rescaled PDE in Section \ref{barriers}; a comparison principle for the subsolutions and supersolutions is also proved there. In Section \ref{existence}, we use these results to complete the proof of our main theorem. 

\subsection*{Acknowledgements}
We thank Dan Knopf for helpful discussion on this project. J. Isenberg is partially supported by NSF grant PHY-1707427; H. Wu thanks the support by ARC grant DE180101348; Z. Zhang thanks the support by ARC grant FT150100341.

\section{Formal solutions}\label{formal}

To begin the derivation of a class of formal approximate solutions, we assume that for large values of $\tau$, the terms $\left. \p_\tau \right\vert_{\phi} y$ and $\displaystyle{\frac{y_{\phi\phi}}{1+e^{\tau}y^2_\phi}}$ in equation \eqref{eq:y(phi,tau)} are negligible. It follows that the PDE \eqref{eq:y(phi,tau)} can be approximated by the ODE
\begin{align}
\label{truncd}
\left(\frac{(n-1)}{\phi}-\frac{\phi}{2}\right) \tilde y_\phi - a = 0, 
\end{align}
for which the general solution takes the form
\begin{align}
\label{eq:truncdsoln}
\tilde y (\phi) & = C_1 - a \log\left(2(n-1)-\phi^2\right),
\end{align}
where $C_1$ is an arbitrary constant, and $\phi\in[0,\sqrt{2(n-1)})$. Note that $y(\phi)$ is convex and $\tilde y \nearrow \infty$ as $\phi\nearrow\sqrt{2(n-1)}$. This is consistent with the hypersurface being asymptotic to a cylinder at spatial infinity, which is a desired feature for the solutions of interest.

In light of the assumptions made at the beginning of this section in obtaining the ODE \eqref{truncd}, we substitute the solution $\tilde y$ into the quantity $\displaystyle{\frac{y_{\phi\phi}}{1+e^{\tau}y^2_\phi}}$, obtaining 
\begin{align*}
\frac{\tilde y_{\phi\phi}}{1 + e^{\tau} {\tilde y_\phi}^2} & = \frac{2a(2n-2+\phi^2)}{(2n-2-\phi^2)^2 + 4a^2 e^\tau \phi^2 }.
\end{align*}
This suggests that  $\tilde y$ is a reasonable approximate solution, provided that $\phi e^{\tau/2}$ is sufficiently large. 

We now set $z:=\phi e^{\tau/2}$ and label the dynamic (i.e. time-dependent) region in which $z=O(1)$ as the \emph{interior region}. The complement of the interior region is labelled as the \emph{exterior region}. 

Note that the condition $z=\phi e^{\tau/2} =O(1)$ is equivalent to the condition $\phi = O\left( e^{-\tau/2}\right)$, which corresponds to a region near the tip (at which $\phi = 0$). Using the change-of-variables formula
\begin{align*}
\p_\tau|_z y = \p_\tau|_{\phi} y - \frac{1}{2} z y_z,
\end{align*}
and \eqref{eq:y(phi,tau)}, we obtain the evolution equation for $y(z,\tau)$:
\begin{align}\label{eq:y(z,tau)}
\left. \p_\tau\right\vert_z y & = \frac{y_{zz}}{e^{-\tau} + e^{\tau}y^2_z} +  \left(e^{\tau}\frac{(n-1)}{z} - z \right) y_z - a.
\end{align}

As in \cite{IW19}, we consider the ansatz
\begin{align}
\label{eq:tipansatz-y}
y = \tilde A + e^{-\tau} \tilde F(z,\tau),
\end{align}
where $\tilde A$ is a constant. Substituting \eqref{eq:tipansatz-y} into equation \eqref{eq:y(z,tau)}, we obtain 
\begin{align}
\label{eq:tildeAF}
\frac{\tilde F_{zz}}{1 + \tilde F^2_z} + (n-1) \frac{\tilde F_z}{z} & = a + e^{-\tau}\left( z \tilde F_z - \tilde F + \left.\p_\tau\right\vert_z \tilde F \right). 
\end{align}
Continuing the formal argument, we assume that for $\tau$ very large, the term in \eqref{eq:tildeAF} with the coefficient $e^{-\tau}$ is negligible. Then equation \eqref{eq:tildeAF} is reduced to the ODE
\begin{align}
\label{eq:tildeF}
\frac{\tilde F_{zz}}{1+\tilde F^2_z} + (n-1)\frac{\tilde F_z}{z} & = a
\end{align}
for some constant $a$. To solve \eqref{eq:tildeF} for $\tilde F$, we define $\tilde P(w)$ to be the unique solution to the initial value problem  
\begin{align}
\label{eq:tildeP}
\frac{\tilde P_{ww}}{1+(\tilde P_w)^2} + (n-1)\frac{\tilde P_w}{w} = 1,\quad  \tilde P(0) = \tilde P_w(0) = 0,
\end{align}
which is clearly an even function by symmetry and uniqueness. Note that $w=0$ is a regular singular point. We then readily verify that for an arbitrary function $C(\tau)$, 
\begin{align}
\label{eq:tildeF(z,tau)}
\tilde F(z,\tau) = \frac{1}{a} \tilde P ( a z ) + C(\tau)
\end{align}
satisfies \eqref{eq:tildeF}. 

\begin{remark}

For the dimension $n=1$, \eqref{eq:tildeP} is reduced to
\begin{align*}
\frac{\tilde P''}{1+(\tilde P')^2} = 1,\quad  \tilde P(0) = \tilde P'(0) = 0,
\end{align*}
whose solution is $\tilde P(r) = -\log \cos r$. The graph of $x= t-\log \cos r$, where $r\in(-\pi/2,\pi/2)$, has been named the ``Grim Reaper'' by M. Grayson \cite{Ang91}. It translates with constant velocity along the $x$-axis and is a solution to the curve-shortening flow (i.e., 1-dimensional MCF). 

For $n\geqslant 2$, solving \eqref{eq:tildeP} for the function $\tilde P$ and then rotating the graph of $x=c^{-1}\tilde{P}(cr)+ct$ around the $x$-axis defines a higher dimensional analogue of the Grim Reaper, a translating soliton also known as the bowl soliton. 
\end{remark}

The initial value problem \eqref{eq:tildeP} has been solved in \cite[pp.24--25]{AV97} for general dimensions, which has a unique convex solution with the following asymptotics: 
\begin{align}\label{eq:asymptotics-tildeP}
\tilde P(z) &= \left\{
\begin{array}{cc}
\frac{z^2}{2n} + o\left(z^2\right) & z\searrow 0\\ \\
\frac{z^2}{2(n-1)} - \log z + O\left(z^{-2}\right)& z\nearrow\infty.
\end{array}
\right.
\end{align}
where the $z\nearrow\infty$ case is derived in \cite[Proposition 2.1]{AV97} and the $z\searrow 0$ case is obvious by the equation and demonstrates the smoothness through $z=0$. It then follows that the asymptotics for $y(z,\tau)$ take the form
\begin{align}\label{eq:asymptotics-y}
y(z) &= \left\{
\begin{array}{cc}
\tilde A + e^{-\tau} C(\tau) + e^{-\tau} \frac{a}{2n} z^2+ o\left(e^{-\tau}z^2\right) & z\searrow 0\\ \\
\vspace{10pt}
\tilde A + e^{-\tau} \left(C(\tau)-\frac{1}{a}\log a \right)+ e^{-\tau} \frac{a}{2(n-1)} z^2-e^{-\tau}\frac{1}{a}\log z+ O\left(e^{-\tau}z^{-2}\right) & z\nearrow\infty.
\end{array}
\right.
\end{align}

We now discuss the properties of the formal solutions. Recalling the scaling formulas $x = y - a\log(T-t)$ and $z = u (T-t)^{-1}$, as well as the interior region ansatz formula  \eqref{eq:tipansatz-y} and the expression \eqref{eq:asymptotics-tildeP} for the asymptotics of $\tilde P$,  we obtain the following asymptotic expression for $x$ in a neighbourhood of the tip (i.e. for $z$ near $0$):
\begin{align*}
x & = y - a \log(T-t) \\
& =  \tilde A + (T-t) C\left(-\log(T-t)\right) - a\log(T-t) +\frac{a}{2n} \frac{u^2}{(T-t)} + o\left(\frac{u^2}{T-t}\right).
\end{align*}

In our consideration, as $t\nearrow T$, the highest curvature always occurs at the tip, which is an umbilical point (cf. item (1) of Theorem \ref{thmmain}), so the mean curvature and hence the normal (horizontal) velocity attain their maximal values there. Using the asymptotic expression for $x$ from above, we have 
\begin{align}\label{eq:meancurv}
\left. H \right\vert_{\text{tip}} & = n \left. \frac{d^2 x}{du^2}\right   \vert_{u=0} = \frac{a}{T-t}
\end{align}
which implies that the curvature at the tip blows up at Type-II rate. Moreover, over the time period $[t_0,T)$, the tip of the surface moves  along the $x$-axis to the right from its initial position $x_0$ by the amount
\begin{align*}
\int_{t_0}^{T} H \; ds & = \lim\limits_{t\nearrow T} \int_{t_0}^{t} \left. H\right\vert_{\text{tip}} \; ds\\
& = \lim\limits_{t\nearrow T} -a\log(T-t) +a\log(T-t_0) \\
& = \infty.
\end{align*}
Hence, we see that in terms of the original $x$-coordinate, the surface evolving by MCF disappears off to spatial infinity as $t\nearrow T$. However in terms of  the $y$-coordinate, if for example, we choose $C(\tau)=O(\tau)$ (cf. Section 3.1), then the tip remains a finite distance from the origin for all time $\tau$ since
\begin{align*}
y_0(\tau) & = \tilde A + e^{-\tau}C(\tau)\\
& \approx \tilde A.
\end{align*}

The formal solutions constructed separately in the interior and the exterior regions each involve a free parameter. Matching the formal solutions on the overlap of the two regions, we can establish an algebraic relationship between them. 

In the interior region, for the large $z$ asymptotic expansion formula \eqref{eq:asymptotics-y} for the solution $y(z)$, by setting $z$ equal to a large constant $R$ and presuming that $\tau$ is very large, one has 
\begin{align}\label{eq:match-formal-int}
y & =    \tilde A + e^{-\tau} \left(C(\tau)-\frac{1}{a}\log a \right)+ e^{-\tau} \frac{a}{2(n-1)} z^2-e^{-\tau}\frac{1}{a}\log z+ O\left(e^{-\tau}z^{-2}\right) \notag\\
& \approx \tilde A.
\end{align}

In the exterior region, also setting $z=R$ (and so $\phi = Re^{-\tau/2}$) and presuming very large $\tau$, we have from (\ref{eq:truncdsoln})
\begin{align}
\label{eq:match-formal-ext}
y 
& = \tilde y (\phi) \notag \\
& = C_1 - a \log\left(2(n-1)-\phi^2\right) \notag \\
& \approx C_1 - a\log(2n-2).
\end{align}
Matching \eqref{eq:match-formal-int} with \eqref{eq:match-formal-ext}, we obtain 
\begin{align*}
\tilde A = C_1 - a\log(2n-2).
\end{align*}

We now collect these results and write out expressions for our formal solutions, both in the interior and the exterior regions. In the interior region, which is characterised by $z = \phi e^{\tau/2} = u e^{\tau} = O(1)$, we blow up the  MCF solution $u(t,x)$  at the prescribed Type-II rate $(T-t)^{-1}$. We also rescale the coordinates in accord with how fast the surface moves under mean curvature flow. Then in the interior region, the formal solution is given by 
\begin{align*}
y_{form, int} & = \tilde A +  e^{-\tau} C(\tau) + e^{-\tau} \tilde F(z),
\end{align*}
where $\tilde F$ and the as-yet-unspecified function $ C(\tau)$ are related to $\tilde P$ as in \eqref{eq:tildeF(z,tau)}, and where $\tilde P$ is the solution to the initial value problem \eqref{eq:tildeP}. 

In the exterior region, where $R e^{-\gamma\tau} \leqslant \phi < \sqrt{2(n-1)}$ for some large $R>0$, the formal solution takes the form
\begin{align*}
y_{form, ext} & = \tilde A + a\log(2n-2) - a\log(2n-2-\phi^2).
\end{align*}
We note that $y\nearrow\infty$ as $\phi\nearrow\sqrt{2(n-1)}$, which indicates that the exterior formal solutions are  asymptotic to and lie strictly within the cylinder of radius $\sqrt{2(n-1)}$.

\subsection{The formal solutions revisited in the form  $\lambda(z,\tau)$ or $\lambda(\phi, \tau)$}\label{lambdaztau}

To prove the main result, Theorem \ref{thmmain}, it is useful to work with the quantity $\lambda := -1/y$, since in terms of $\lambda$, the asymptotically cylindrical end of the embedded hypersurface corresponding to large values of $y$ is effectively compactified. The MCF evolution equation for $\lambda$ is readily obtained by substituting $\lambda=-1/y$ into \eqref{eq:y(phi,tau)}:
\begin{align}\label{eq:lambda(phi,tau)}
\left. \p_\tau \right\vert_\phi \lambda & = \frac{\lambda_{\phi\phi}-2\lambda^2_\phi/\lambda}{1+e^{\tau}\lambda^2_\phi/\lambda^4} + \left(\frac{n-1}{\phi} - \frac{\phi}{2}\right)\lambda_\phi - a \lambda^2.
\end{align}
The class of MCF solutions we consider here corresponds to solutions of equation \eqref{eq:lambda(phi,tau)} subject to the following effective boundary conditions: the rotational symmetry of the graph implies that $\lambda_\phi (0,\tau) = 0$, and the asymptotically cylindrical condition implies that $\lambda(\sqrt{2(n-1)}, \tau)=\lambda(-\sqrt{2(n-1)}, \tau)=0$.

As in the analysis done above in terms of $y$, it is useful here to use the dilated spatial variable $z=\phi e^{\tau/2}$. The evolution equation for $\lambda(z,\tau)$ takes the form
\begin{align}\label{eq:lambda(z,tau)}
\left. \p_\tau \right\vert_z \lambda & = \frac{e^{\tau}(\lambda_{zz}-2\lambda^2_z/\lambda)}{1+e^{2\tau}\lambda^2_z/\lambda^4} + e^{\tau}(n-1)\frac{\lambda_z}{z} - z\lambda_z - a \lambda^2.
\end{align}

We now construct the formal solutions in terms of $\lambda(z,\tau)$ or $\lambda(\phi,\tau)$, using arguments very similar to those above in terms of $y$. 

In the interior region, where $z=O(1)$, we use the ansatz
\begin{align*}
\lambda=-A+e^{-\tau}F(z),
\end{align*}
where $A$ is a positive constant. Substituting this ansatz into equation \eqref{eq:lambda(z,tau)}, we find $F$ satisfying
\begin{align}
\label{Flambda}
e^{-\tau}\left(-F + \left. \partial_\tau\right\vert_z F \right)
& = \frac{F_{zz} - 2e^{-\tau} F^2_z/(-A+e^{-\tau}F)}{1+F^2_z/(-A+e^{-\tau} F)^4} + (n-1)\frac{F_z}{z} - zF_ze^{-\tau}\\
\nonumber &\quad  - a A^2 + 2aAFe^{-\tau} - aF^2e^{-2\tau}.
\end{align}
Assuming, in the formal argument, that the terms with the coefficient $e^{-\tau}$ in equation \eqref{Flambda} can be ignored for large $\tau$, \eqref{Flambda} is reduced to the following ODE for $F$:
\begin{align}
\label{eq:ODE-F}
\frac{F_{zz}}{1+F^2_z/A^4} + (n-1) \frac{F_z}{z} = aA^2.
\end{align}
To solve \eqref{Flambda}, we rescale $F$ according to
\begin{equation}
\label{FP}
F(z)= \frac{A^2}{a} P(az),
\end{equation}
and determine that $P(w)$ satisfies the ODE for $w=az$, 
\begin{align*}
\frac{P_{ww}}{1 + P_{w}^2} + (n-1)\frac{P_w}{w} = 1.
\end{align*}
Subject to the initial conditions $P(0)=P_w(0)=0$ which naturally come from the geometric interpretation of $\lambda$, we can solve for $P$ uniquely; cf., equation \eqref{eq:tildeP}. Moreover, the asymptotic expansions of $P(w)$ are known:
\begin{align*}
P(w) &= \left\{
\begin{array}{cc}
\frac{1}{2n} w^2 + o\left(w^2\right) & w\searrow 0 \\ \\
\frac{1}{2(n-1)} w^2 - \log w + O\left(w^{-2} \right) & w\nearrow\infty
\end{array}
\right.
\end{align*}
where the $w\nearrow\infty$ case is derived in \cite[Proposition 2.1]{AV97} and the $w\searrow 0$ is obvious by the equation. Consequently, the asymptotic expansions of $F(z)$ are as follows:
\begin{align}\label{eq:asymp-F}
F(z) &= \left\{
\begin{array}{cc}
\frac{aA^2}{2n} z^2 + o\left(z^2\right) & z\searrow 0 \\ \\
\frac{aA^2}{2(n-1)} z^2 - \frac{A^2}{a}\log(az) + O\left( z^{-2}\right) & z\nearrow\infty.
\end{array}
\right.
\end{align}

In the exterior region, examining  the evolution of $\lambda(\phi,\tau)$ as governed by \eqref{eq:lambda(phi,tau)}, we assume (in the formal argument) that the term $\displaystyle{\frac{\lambda_{\phi\phi}-2\lambda^2_\phi/\lambda}{1+e^{\tau}\lambda^2_\phi/\lambda^4}}$ is negligible for $\tau$ large. Then in the same way as $\tilde y$ in \eqref{eq:truncdsoln} for $y$, we note that any solution of the limiting equation
\begin{align}
\label{eq:ODE-lambdabar}
\left(\frac{n-1}{\phi} - \frac{\phi}{2} \right)\bar\lambda_\phi - a \bar\lambda^2 = 0
\end{align}
is an approximate solution to \eqref{eq:lambda(phi,tau)}. We can solve for $\bar\lambda(\phi)$ explicitly, 
\begin{align}
\label{eq:lambdabar}
\bar\lambda(\phi) & = - \frac{1}{C_1 - a \log(2n-2-\phi^2)}
\end{align}
for any constant $C_1>a\log(2n-2)$.

\section{Supersolutions and subsolutions}\label{super-sub}

For a differential equation of the form $\mathcal{D}[\psi]=0$, a function $\psi_+$ is a \emph{supersolution} if $\mathcal{D}[\psi^+]\geqslant 0$, while $\psi^-$ is a \emph{subsolution} if $\mathcal{D}[\psi^-]\leqslant 0$. If there exist a supersolution $\psi^+$ and a subsolution $\psi^-$ for the differential operator $\mathcal{D}$, and they satisfy the inequality $\psi^+ \geqslant \psi^-$, then they are called \emph{upper and lower barriers}, respectively. If $\mathcal{D}[\psi]=0$ admits solutions, then the existence of barriers $\psi^+\geqslant \psi^-$ implies that there exists a solution $\psi$ with $\psi^+ \geqslant \psi \geqslant \psi^-$. This is the general idea of our argument which we justify during the procedure. 

In this section, we construct subsolutions and supersolutions for the MCF of our models in the interior and the exterior regions. Then in the next section, we combine them to obtain the global barriers for the flow.

\subsection{Interior region}

In the interior region, we work with  $\lambda(z,\tau)$ and the corresponding  MCF equation \eqref{eq:lambda(z,tau)}. The differential operator is the following quasilinear parabolic one 
\begin{align}
\label{Tz}
\mathcal{T}_z[\lambda] : = & \left. \p_\tau \right\vert_z \lambda - \frac{e^{\tau}(\lambda_{zz}-2\lambda^2_z/\lambda)}{1+e^{2\tau}\lambda^2_z/\lambda^4} - e^{\tau}(n-1)\frac{\lambda_z}{z} + z\lambda_z + a \lambda^2,
\end{align}
for which we seek subsolutions and supersolutions. The result is the following. 

\begin{lemma}\label{interior-supersub}
For an integer $n\geqslant 2$, a constant $a>0$ and a pair of positive numbers $A^{\pm}$, we define even functions $F^{\pm}$ to be the solution to equation \eqref{eq:ODE-F} with $A=A^{\pm}$ respectively.

For any fixed constants $R_1>0$, $B^{\pm}$ and $E^{\pm}$, there exist even functions $Q^{\pm}:\RR\to\RR$, constants $D^{\pm}$, and a sufficiently large $\tau_1<\infty$ such that the functions
\begin{align}
\label{eq:int-supersub}
\lambda^{\pm}_{int} (z,\tau) := -A^{\pm} + e^{-\tau}F^{\pm}(z) + e^{-\tau} \left(B^{\pm}\tau + E^{\pm}\right)  + \tau e^{-2\tau}D^{\pm}Q^{\pm}(z)
\end{align}
are a supersolution ($+$) and a subsolution ($-$) respectively of $\mathcal{T}_z[\lambda]=0$ on the interval $|z| \leqslant R_1$ for all $\tau\geq\tau_1$.

The functions $Q^{\pm}$ depend on $A^{\pm}$ and $F^{\pm}(z)$ respectively. The constants $D^{\pm}$ depend on $n$, $R_1$, $A^{\pm}$ and $B^{\pm}$ respectively.
\end{lemma}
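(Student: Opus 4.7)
The plan is to substitute \eqref{eq:int-supersub} into the operator $\mathcal{T}_z$ of \eqref{Tz} and expand in decreasing orders of $e^{-\tau}$, treating $\tau$ as a large parameter while $z$ ranges over the compact interval $[-R_1, R_1]$. The order-$1$ terms, collected from $\partial_\tau(-A^\pm) = 0$, the drift $e^\tau(n-1)\lambda_z/z$, the curvature quotient, and $a(A^\pm)^2$ from $a\lambda^2$, reproduce exactly the left-hand side of the ODE \eqref{eq:ODE-F} evaluated at $F^\pm$, and hence vanish by the very definition of $F^\pm$. This confirms $-A^\pm + e^{-\tau}F^\pm$ as the backbone of the ansatz.

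Next I would carefully bookkeep all contributions at the two finer scales $e^{-\tau}$ and $\tau e^{-\tau}$. The $e^{-\tau}(B^\pm\tau + E^\pm)$ correction generates, through $\partial_\tau$, $z\lambda_z$, and the linear cross term $-2aA^\pm$ inside $a\lambda^2$, residuals of the form $e^{-\tau}\mathcal{R}_0^\pm(z) + \tau e^{-\tau}\mathcal{R}_1^\pm(z)$, where $\mathcal{R}_0^\pm, \mathcal{R}_1^\pm$ are explicit even functions built from $F^\pm, A^\pm, B^\pm, E^\pm, n, a$ and bounded on $[-R_1, R_1]$; importantly $\mathcal{R}_1^\pm$ (the coefficient of $\tau$) depends on $B^\pm$ but not on $E^\pm$, since $E^\pm$ enters without a $\tau$ factor. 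The prefactor $\tau e^{-2\tau}$ of $D^\pm Q^\pm$ is tuned precisely so that, after picking up a factor of $e^\tau$ from the curvature quotient, the $D^\pm Q^\pm$ correction contributes at the matching scale $\tau e^{-\tau}$. Collecting, one finds
\begin{align*}
\mathcal{T}_z[\lambda^\pm_{int}] = e^{-\tau}\mathcal{R}_0^\pm(z) + \tau e^{-\tau}\bigl[\mathcal{R}_1^\pm(z) - D^\pm \mathcal{L}^\pm[Q^\pm](z)\bigr] + O\!\bigl(\tau^2 e^{-2\tau}\bigr),
\end{align*}
where $\mathcal{L}^\pm$ is the formal linearization of the ODE \eqref{eq:ODE-F} at $F^\pm$,
\begin{align*}
\mathcal{L}^\pm[Q] = \frac{Q_{zz}}{1+(F^\pm_z)^2/(A^\pm)^4} - \frac{2 F^\pm_z F^\pm_{zz}Q_z/(A^\pm)^4}{\bigl(1+(F^\pm_z)^2/(A^\pm)^4\bigr)^2} + (n-1)\frac{Q_z}{z}.
\end{align*}

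I would then take $Q^\pm$ to be the unique smooth even solution of $\mathcal{L}^\pm[Q^\pm] = 1$ with $Q^\pm(0) = Q^\pm_z(0) = 0$. Near $z = 0$, $F^\pm_z = O(z)$ makes $\mathcal{L}^\pm$ reduce to leading order to the Euclidean radial Laplacian $Q_{zz} + (n-1)Q_z/z$, so the same Frobenius-type analysis used to produce $P$ in \eqref{eq:tildeP} yields the required smooth even $Q^\pm$, depending only on $n, a, A^\pm, F^\pm$. Finally, choose $D^+$ and $D^-$ with opposite signs and magnitudes exceeding $\|\mathcal{R}_1^\pm\|_{L^\infty([-R_1, R_1])} + 1$, so that the bracket $\mathcal{R}_1^\pm - D^\pm$ has definite sign uniformly on $[-R_1, R_1]$ (positive for $+$, negative for $-$). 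For $\tau \geq \tau_1$ with $\tau_1$ chosen so that the dominant $\tau e^{-\tau}$ term absorbs both $e^{-\tau}\|\mathcal{R}_0^\pm\|_{L^\infty}$ and the $O(\tau^2 e^{-2\tau})$ remainder, we conclude $\mathcal{T}_z[\lambda^+_{int}] \geq 0$ and $\mathcal{T}_z[\lambda^-_{int}] \leq 0$. Evenness of $\lambda^\pm_{int}$ is inherited from evenness of $F^\pm$ and $Q^\pm$, and the announced dependencies of $D^\pm$ on $n, R_1, A^\pm, B^\pm$ are manifest in the construction.

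The main obstacle is the Step 2 bookkeeping: accurately tracking every $e^{-\tau}$- and $\tau e^{-\tau}$-scale contribution from the fully nonlinear curvature quotient $e^\tau(\lambda_{zz} - 2\lambda_z^2/\lambda)/(1 + e^{2\tau}\lambda_z^2/\lambda^4)$, in particular the cross-terms between the $F^\pm$ backbone and the $D^\pm Q^\pm$ correction (both through the numerator via $\lambda_{zz}$ and $2\lambda_z^2/\lambda$, and through the denominator via the expansion of $\lambda^{-4}$), and verifying that these cross-terms organize into exactly the linear operator $\mathcal{L}^\pm$ applied to $Q^\pm$. A secondary technical step is the construction of $Q^\pm$ through the regular singular point at $z = 0$, which parallels but is simpler than the nonlinear analysis for $P$ in \eqref{eq:tildeP}.
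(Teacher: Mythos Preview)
Your proposal is correct and follows essentially the same approach as the paper. Both arguments substitute the ansatz into $\mathcal{T}_z$, use the ODE \eqref{eq:ODE-F} to kill the leading term, and then fix $Q^\pm$ via a linear ODE so that the $\tau e^{-\tau}$-scale contribution reduces to a constant that $D^\pm$ can dominate. Two minor remarks: (i) your normalization $\mathcal{L}^\pm[Q^\pm]=1$ is the negative of the paper's choice (the paper sets $-\mathcal{L}^\pm[Q^\pm]=1$), so your $D^\pm$ have the opposite signs to theirs, which is harmless; (ii) your enumeration of what feeds into $\mathcal{R}_1^\pm$ omits one source, namely the $B^\pm\tau e^{-\tau}$ correction entering through $\lambda^4$ in the denominator of the curvature quotient, which in the paper produces the additional bounded term $\frac{4(A^\pm)^3 B^\pm (F_z^\pm)^2 F_{zz}^\pm}{((A^\pm)^4+(F_z^\pm)^2)^2}$ at scale $\tau e^{-\tau}$. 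This does not affect your conclusion, since you only need $\mathcal{R}_1^\pm$ bounded on $[-R_1,R_1]$, but it is worth tracking so that the claimed dependence of $D^\pm$ on $B^\pm$ is fully accounted for.
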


\begin{proof}

In the proof, we omit the $\pm$ in the notations as the argument is the same for both. The difference only shows up at the end of the argument, as is specified below. The functions involved are all even in $z$, so we focus on $z\geqslant 0$. 

We start our proof with the function $Q$ unspecified; $Q$ is to be determined in \eqref{eq:ODE-Q}. Applying the operator $\mathcal{T}_z$ (defined in \eqref{Tz}) to the function $\lambda ^+_{int}$ from \eqref{eq:int-supersub}, we calculate
\begin{align*}
\mathcal{T}_z[\lambda^+_{int}] & = I + II + III + IV + V,
\end{align*}
where (to simplify the expressions, we replace ``$\lambda^+_{int}$'' by ``$\lambda$'')
\begin{align*}
I & = \left. \p_\tau \right\vert_\phi \lambda\\
& = - B \tau e^{-\tau} -e^{-\tau}F  + B e^{-\tau} - E e^{-\tau} + O\left(\tau e^{-2\tau}\right),\\
III & = - e^{\tau}(n-1) \frac{\lambda_z}{z}\\
& = -(n-1)\frac{F_z}{z} - (n-1) \frac{Q_z}{z} D\tau e^{-\tau},\\
IV & = z\lambda_z \\
& = z F_z e^{-\tau} + \tau e^{-2\tau} D z Q_z,\\
V & = a\lambda^2 \\
& = aA^2 - 2aA\left( B\tau e^{-\tau} + (E+F)e^{-\tau} \right) + O\left(\tau^2e^{-2\tau}\right),
\end{align*}
and the most complicated term
$$II= - \frac{e^{\tau}(\lambda_{zz}-2\lambda^2_z/\lambda)}{1+e^{2\tau}\lambda^2_z/\lambda^4}.$$
Now we calculate the expansion of $II$ with respect to $\tau$. Recall that $F$ and $Q$ are even functions of $z$ and the other capital letters in these expressions are constants. We have 
\begin{align*}
\lambda & =-A+\tau e^{-\tau}B+e^{-\tau}(E+F)+\tau e^{-2\tau}DQ,\\
\lambda_z & =e^{-\tau}F_z+\tau e^{-2\tau}DQ_z,\\
\lambda_{zz} & =e^{-\tau}F_{zz}+\tau e^{-2\tau}DQ_{zz}.
\end{align*}
If we substitute these quantities into the expression for $II$, we obtain
\begin{equation}
II= -\frac{\lambda^4(F_{zz}+\tau e^{-\tau}DQ_{zz})-2\lambda^3e^{-\tau}(F_z^2+2\tau e^{-\tau}DF_zQ_z+\tau^2 e^{-2\tau}D^2Q_z^2)}{\lambda^4+F_z^2+2\tau e^{-\tau}DF_zQ_z+\tau^2 e^{-2\tau}D^2Q_z^2} \nonumber
\end{equation}
For our purposes, we only need to keep track of the first two leading-order terms, i.e., the coefficient of $1$ and $\tau e^{-\tau}$, and so the quantity $II$ takes the form
\begin{equation}
\begin{split}
II
&= -\frac{(A^4-4A^3B\tau e^{-\tau})(F_{zz}+\tau e^{-\tau}DQ_{zz})}{A^4+F_z^2+(2DF_zQ_z-4A^3B)\tau e^{-\tau}}+O(e^{-\tau}) \\
&= -\frac{(A^4-4A^3B\tau e^{-\tau})(F_{zz}+\tau e^{-\tau}DQ_{zz})}{A^4+F_z^2} \left(1-\tau e^{-\tau}\frac{2DF_zQ_z-4A^3B}{A^4+F_z^2}\right)+O(e^{-\tau}) \\
&= -\frac{A^4F_{zz}}{A^4+F_z^2}-\frac{\tau e^{-\tau}}{A^4+F_z^2}\left(-A^4F_{zz}\frac{2DF_zQ_z-4A^3B}{A^4+F_z^2}+A^4DQ_{zz}-4A^3BF_{zz}\right)+O(e^{-\tau}), \nonumber
\end{split}
\end{equation}
where the $O(e^{-\tau})$ terms are uniform with respect to $|z|\leqslant R_1$. 

Thus for $\mathcal{T}_z[\lambda_{int}]= I + II + III + IV + V$, the constant term, i.e., the coefficient of $1$, is
$$-(n-1)\frac{F_z}{z}+aA^2-\frac{A^4F_{zz}}{A^4+F_z^2}=0$$
in light of (\ref{eq:ODE-F}). On the other hand, the coefficient of the term $\tau e^{-\tau}$ in the expression for $\mathcal{T}_z[\lambda^+_{int}]$ is
\begin{equation}
\begin{split}
&~~~~ -B-\frac{DQ_z}{z}(n-1)-2aAB-\frac{1}{A^4+F_z^2}\left(-A^4F_{zz}\frac{2DF_zQ_z-4A^3B}{A^4+F_z^2}+A^4DQ_{zz}-4A^3BF_{zz}\right) \\
&= -B-2aAB-\frac{D}{z}(n-1)Q_z+\frac{A^4F_{zz}(2DF_zQ_z-4A^3B)}{(A^4+F_z^2)^2}-\frac{A^4DQ_{zz}}{A^4+F_z^2}+\frac{4A^3BF_{zz}}{A^4+F_z^2} \\
&= -B-2aAB+\frac{4A^3BF_z^2F_{zz}}{(A^4+F_z^2)^2}+D\left(-\frac{Q_z}{z}(n-1)-\left[\frac{Q_z}{1+F_z^2/A^4}\right]_z\right).
\end{split}
\end{equation}
So we choose $Q$ to be the unique function satisfying 
\begin{equation}
\label{eq:ODE-Q}
-\frac{Q_z}{z}(n-1)-\left[\frac{Q_z}{1+F_z^2/A^4}\right]_z=1, ~~Q(0)=Q_z(0)=0
\end{equation} 
which is a smooth even function. 

It follows from the asymptotic expansion \eqref{eq:asymp-F} of $F$ that there exists a constant $C=C(R_1)>0$ such that for $|z|\leqslant R_1$,
\begin{align*} 
\left\vert F\right\vert,\left\vert zF_z \right\vert & \leqslant C, \\
\displaystyle{ \left\vert \frac{A^{-5}F_z^2F_{zz}}{(1+F^2_z/A^4)^2} \right\vert} & \leqslant C.
\end{align*} 
Then using equations \eqref{eq:ODE-F} and \eqref{eq:ODE-Q} satisfied by $F$ and $Q$ respectively, we have for $|z|\leqslant R_1$ and $\tau\geqslant \tau_1$ with $\tau_1$ sufficiently large,
\begin{align*}
e^{\tau}\mathcal{T}_z[\lambda^+] & = \left(D^+ - (1+2aA^+) B^+ + \frac{4(A^+)^3B^+(F^+_z)^2F^+_{zz}}{((A^+)^4+(F^+_z)^2)^2}\right)\tau +O(1)\\
& \geqslant \left(D^+ - \left(1+2aA^+ + 4C\right)|B^+| \right)\tau + O(1)\\
& > 0
\end{align*}
where the last inequality holds so long as $D^+ > (1+2aA^+ + C)|B^+|$.

Similarly, for $D^{-} < -(1+2aA^{-} +4C)|B^{-}|$, we have $\mathcal{T}_z[\lambda^-]<0$ for $|z|\leqslant R_1$ and $\tau\geq\tau_1$. The lemma is therefore proven.

\end{proof}

\subsection{Exterior region}

In the exterior region, we work with the quantity $\lambda(\phi,\tau)$, and with the corresponding MCF equation \eqref{eq:lambda(phi,tau)}. Hence, we define the quasilinear parabolic operator
\begin{align}\label{eq:Fphi}
\mathcal{F}_\phi[\lambda] &: =  \left. \p_\tau \right\vert_\phi \lambda - \frac{\lambda_{\phi\phi}-2\lambda^2_\phi/\lambda}{1+e^{\tau}\lambda^2_\phi/\lambda^4} - \left(\frac{n-1}{\phi}-\frac{\phi}{2}\right)\lambda_\phi + a \lambda^2.
\end{align}
For the equation $\mathcal{F}_\phi[\lambda]=0$, we seek subsolutions and supersolutions, whose existence is proven in the following lemma.

\begin{lemma}\label{exterior-supersub}
For an integer $n\geqslant 2$ and positive constants $c^{\pm}$ such that $c^{\pm}-a\log(2n-2)>0$, we define\footnote{This definition is consistent with \eqref{eq:lambdabar}; therefore $\bar\lambda$ satisfies equation \eqref{eq:ODE-lambdabar}.}
\begin{align}
\label{eq:barlambda}
\bar\lambda^{\pm} &= \bar\lambda^{\pm}(\phi) := \frac{-1}{c^{\pm} - a \log(2n-2-\phi^2)}.
\end{align}
There exists an even function 
$$\psi:(-\sqrt{2(n-1)},\sqrt{2(n-1)}) \to \RR$$
such that for any fixed $R_2>0$, there exist a pair of constants $b^{\pm}$ and sufficiently large $\tau_2<\infty$, the functions 
\begin{align}
\label{eq:ext-supersub}
\lambda^{\pm}_{ext}(\phi,\tau) := \bar\lambda^{\pm}(\phi) + b^{\pm} e^{-\tau}\psi(\phi)
\end{align}
are a supersolution ($+$) and a subsolution ($-$) respectively, of $\mathcal{F}_\phi[\lambda]=0$ over the region $R_2 e^{-\tau/2} \leqslant \vert\phi\vert < \sqrt{2(n-1)}$ for all $\tau\geq\tau_2$. The constant $b^{\pm}$ depends on $n, R_2$ and $c^{\pm}$ respectively.

\end{lemma}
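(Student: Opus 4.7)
The plan is to substitute $\lambda^\pm_{ext}=\bar\lambda^\pm+b^\pm e^{-\tau}\psi$ into $\mathcal{F}_\phi$ and expand in orders of $b^\pm e^{-\tau}$. Writing $\mu^\pm=c^\pm-a\log(2n-2-\phi^2)$, so that $\bar\lambda^\pm=-1/\mu^\pm$, a short computation yields the algebraic identities $\bar\lambda^\pm_{\phi\phi}-2(\bar\lambda^\pm_\phi)^2/\bar\lambda^\pm=\mu^\pm_{\phi\phi}/(\mu^\pm)^2$ and $(\bar\lambda^\pm_\phi)^2/(\bar\lambda^\pm)^4=(\mu^\pm_\phi)^2$. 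Combining these with the ODE \eqref{eq:ODE-lambdabar} satisfied by $\bar\lambda^\pm$, the drift and reaction terms cancel to leave the residual
\begin{align*}
\mathcal{F}_\phi[\bar\lambda^\pm]=-\frac{\mu^\pm_{\phi\phi}}{(\mu^\pm)^2\bigl(1+e^\tau(\mu^\pm_\phi)^2\bigr)},
\end{align*}
which is strictly negative since $\mu^\pm_{\phi\phi}=2a(2n-2+\phi^2)/(2n-2-\phi^2)^2>0$. Hence $\bar\lambda^\pm$ is already a strict subsolution of $\mathcal{F}_\phi[\lambda]=0$, which makes the subsolution case easy: any sufficiently small perturbation $b^-e^{-\tau}\psi$ preserves the sign.

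The first-order contribution of $\eta=b^\pm e^{-\tau}\psi$, after using \eqref{eq:ODE-lambdabar} once more to eliminate the quadratic-in-$\bar\lambda^\pm$ pieces, takes the form $b^\pm e^{-\tau}\mathcal{L}_0^\pm\psi+\delta N$, where
\begin{align*}
\mathcal{L}_0^\pm\psi:=-\psi-\left(\frac{n-1}{\phi}-\frac{\phi}{2}\right)\psi_\phi+2a\bar\lambda^\pm\psi,
\end{align*}
and $\delta N$ is the linearization at $\bar\lambda^\pm$ of the quasilinear nonlocal term in $\mathcal{F}_\phi$. I would pick a single even smooth function $\psi=\psi(\phi)$ so that $\mathcal{L}_0^\pm\psi$ has a definite sign on the exterior region for both choices of $\pm$ (possible because the $\pm$-dependence enters linearly through the bounded negative factor $2a\bar\lambda^\pm$), and then choose $b^+$ and $b^-$ with opposite signs and magnitudes depending on $n$, $R_2$, $c^\pm$ so that this linear contribution has the correct sign relative to the residual. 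Quadratic remainders of size $O((b^\pm)^2e^{-2\tau})$ are absorbed by taking $\tau_2$ large.

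The hard part will be uniform control throughout $R_2e^{-\tau/2}\leqslant|\phi|<\sqrt{2(n-1)}$. For $\phi$ bounded away from the inner boundary one has $e^\tau(\mu^\pm_\phi)^2\gg 1$, so the residual $\mathcal{F}_\phi[\bar\lambda^\pm]$ is of size $O(e^{-\tau})$ and is readily dominated by the $O(b^\pm e^{-\tau})$ linear correction. In the thin boundary layer $\phi\sim R_2e^{-\tau/2}$, however, $e^\tau(\mu^\pm_\phi)^2$ is only of order $R_2^2$ and the residual is of order $1/(1+R_2^2)$ uniformly in $\tau$. Making the perturbative correction dominate at this scale for the supersolution requires careful examination of the combined sign of $\mathcal{L}_0^+\psi$ and $\delta N$ in the layer, together with choices of $\tau_2$ and $|b^+|$ that are adapted to $R_2$; this boundary-layer analysis is the most delicate step of the proof, whereas the subsolution follows more directly from $\bar\lambda^-$ already being a subsolution and a small compatible perturbation.
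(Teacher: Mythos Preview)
Your residual computation is correct, and so is your identification of the boundary layer at $|\phi|\sim R_2e^{-\tau/2}$ as the crux. But the proposal has a genuine gap there, and it stems from your stipulation that $\psi$ be \emph{smooth}. With a smooth even $\psi$ one has $\psi'(\phi)=O(\phi)$ near $\phi=0$, so $\mathcal{L}_0^\pm\psi$ is bounded, and the linearized second-order contribution $\delta N$ applied to $b^+e^{-\tau}\psi$ is likewise $O(e^{-\tau})$ in that layer (the coefficient $1/(1+e^\tau(\bar\lambda'_\phi)^2/\bar\lambda^4)$ is only $O(1)$ there). Hence every piece of the correction is $O(e^{-\tau})$, while you have correctly computed that $\mathcal{F}_\phi[\bar\lambda^+]$ is of order $-1/(1+R_2^2)$ uniformly in $\tau$ at the inner edge. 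No choice of $b^+$ and $\tau_2$ can make an $O(e^{-\tau})$ term override an $O(1)$ negative residual; the supersolution argument breaks.

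The paper closes this gap by \emph{not} asking for $\psi$ smooth. Instead it prescribes $\psi$ as a solution of the first-order linear ODE
\[
(2a\bar\lambda-1)\psi-\Bigl(\frac{n-1}{\phi}-\frac{\phi}{2}\Bigr)\psi' \;=\; \Lambda(\phi),
\qquad
\Lambda(\phi):=-\frac{\bar\lambda''-2(\bar\lambda')^2/\bar\lambda}{(\bar\lambda')^2/\bar\lambda^4}
=-\frac{2n-2+\phi^2}{2a\phi^2}\,\bar\lambda^2,
\]
which forces a $\log\phi$ singularity in $\psi$ at $\phi=0$ (so that $\tfrac{n-1}{\phi}\psi'\sim\phi^{-2}$ matches the $\phi^{-2}$ blow-up of $\Lambda$). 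After multiplying by $e^\tau$ the full expression then collapses to
\[
e^\tau\mathcal{F}_\phi[\lambda^\pm_{ext}]
=\Lambda(\phi)\bigl(b^\pm+1+O(\phi^{-2}e^{-\tau})\bigr)+O(e^{-\tau}),
\]
and since $\phi^{-2}e^{-\tau}\leqslant R_2^{-2}$ on the region, the parenthesis is controlled uniformly; because $\Lambda<0$ everywhere, choosing $b^+<-1-C(R_2)$ and $b^->-1+C(R_2)$ gives the desired signs. The point is not to dominate the residual perturbatively, but to engineer the correction so that both it and the residual scale with the \emph{same} singular weight $\Lambda$, allowing a direct comparison. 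Your scheme with a bounded $\mathcal{L}_0^\pm\psi$ cannot achieve this.
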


\begin{proof}

In the proof, we omit the $\pm$ in the notations as the calculation is uniform for both. The difference only appears at the end of the argument, as we see below. The functions involved are all even in $\phi$, so we need only consider $\phi\geqslant 0$.

Applying the operator $\mathcal{F}_\phi$ defined in \eqref{eq:Fphi} to the function $\lambda_{ext}$ from \eqref{eq:ext-supersub}, we obtain 
\begin{align*}
e^{\tau}\mathcal{F}_\phi[\lambda_{ext}] & = I + II + III + IV,
\end{align*}
where for simplicity, we use $\lambda$ in place of $\lambda_{ext}$ and have 
\begin{align*}
I & = e^{\tau} \left. \p_\tau \right\vert_\phi \lambda  = - b \psi,\\
II & = - \frac{(\lambda_{\phi\phi}-2\lambda^2_\phi/\lambda)}{e^{-\tau}+\lambda^2_\phi/\lambda^4},\\
III & = - e^{\tau} \left(\frac{n-1}{\phi}-\frac{\phi}{2}\right)\lambda_\phi \\
& = - e^{\tau}\left(\frac{n-1}{\phi}-\frac{\phi}{2}\right) \bar\lambda' - \left(\frac{n-1}{\phi}-\frac{\phi}{2}\right) b\psi',\\
IV & = e^{\tau}a \lambda^2 \\
& = e^{\tau}a \left( \bar\lambda +  b e^{-\tau} \psi \right)^2\\
& = e^{\tau} a \bar\lambda^2 + 2ab\bar\lambda\psi + e^{-\tau} ab^2\psi^2.
\end{align*}
where $\bar\lambda$ solves equation \eqref{eq:ODE-lambdabar}. Using \eqref{eq:ODE-lambdabar} and combining, we have 
\begin{align*}
e^{\tau}\mathcal{F}_\phi[\lambda] & = II + b\left[ (2a\bar\lambda - 1)\psi - \left(\frac{n-1}{\phi}-\frac{\phi}{2}\right) \psi' \right] + e^{-\tau} ab^2\psi^2.
\end{align*}
If we define 
\begin{align*}
\Lambda(\phi) &:= - \frac{\bar\lambda'' - 2(\bar\lambda')^2/\bar\lambda}{(\bar\lambda')^2/\bar\lambda^4},
\end{align*}
then it follows from \eqref{eq:barlambda} that
\begin{align*}
\Lambda(\phi) & = - \frac{2n-2+\phi^2}{2a\phi^2} \frac{1}{(c - a \log(2n-2-2-\phi^2))^2}\\
& = - \frac{2n-2+\phi^2}{2a\phi^2} \bar\lambda^2.
\end{align*}
We note that $\Lambda < 0$ for $0\leqslant\phi<\sqrt{2(n-1)}$. 

We now take the function $\psi(\phi)$ to be any solution of the ODE
\begin{align}
\label{eq:ODE-psi}
(2a\bar\lambda - 1) \psi - \left( \frac{n-1}{\phi} - \frac{\phi}{2} \right)\psi' & = \Lambda.
\end{align}
This ODE can be solved explicitly and the general solution $\psi$ is 
\begin{align}
\label{eq:soln-psi}
\psi & = \bar\lambda^2 \left(\frac{1}{a} + C_1 (2n-2-\phi^2) +  \frac{2n-2-\phi^2}{4a(n-1)}\left(\log(\phi^2) - \log(2n-2-\phi^2)\right) \right)
\end{align}
for an arbitrary positive constant $C_1$. It follows that
\begin{align*}
e^{\tau}\mathcal{F}_\phi[\lambda_{ext}] & = II + b \Lambda(\phi) + e^{-\tau} ab^2\psi^2.
\end{align*}
We now estimate the term $II$. It follows from \eqref{eq:ext-supersub}) that 
\begin{align*}
\lambda & = \bar\lambda \left(1 + e^{-\tau} b\psi/\bar\lambda\right),\\
\lambda_\phi & = \bar\lambda' \left( 1 + e^{-\tau} b\psi'/\bar\lambda'\right),\\
\lambda_{\phi\phi} & = \bar\lambda'' \left( 1 + e^{-\tau} b\psi''/\bar\lambda''\right),
\end{align*}
and so we need to estimate the terms $\psi/\bar\lambda$, $\psi'/\bar\lambda'$, and $\psi''/\bar\lambda''$. We  do this  by considering the asymptotics near $\phi=0$ and near $\phi=\sqrt{2(n-1)}$, respectively.

We first consider the asymptotics as $\phi\nearrow \sqrt{2(n-1)}$. From \eqref{eq:barlambda} and \eqref{eq:soln-psi}, we have as 
$\phi\nearrow \sqrt{2(n-1)}$ the following asymptotics  
\begin{align*}
\psi/\bar \lambda & = \bar\lambda \left(\frac{1}{a}+o(1)\right),\\
\psi'/\bar \lambda' & = \bar\lambda \left(\frac{2}{a}+o(1)\right),\\
\psi''/\bar \lambda'' & = -\frac{1}{a^2} + \bar\lambda\left(\frac{2}{a} + o(1)\right),
\end{align*}
where $\bar\lambda\to 0^-$ as $\phi\nearrow\sqrt{2(n-1)}$. These asymptotics imply that for some fixed $\delta>0$ (e.g., $\delta=1/4$), if $ \delta\leqslant\phi < \sqrt{2(n-1)}$, then there exists a constant $M_1$ independent of $\tau$ such that 
\begin{align}\label{eq:bound-M1}
\left\vert \psi \right\vert, \left\vert \frac{\psi}{\bar\lambda}\right\vert, \left\vert \frac{\psi'}{\bar\lambda'}\right\vert, \left\vert \frac{\psi''}{\bar\lambda''}\right\vert \leqslant M_1.
\end{align}
We also have for $\phi^2<2n-2$, 
$$\left\vert\frac{\psi^2}{\Lambda}\right\vert= \left\vert\frac{2a\phi^2}{2n-2+\phi^2}\right\vert\cdot\left\vert\frac{\psi^2}{\bar\lambda^2}\right\vert\leqslant a\left\vert\frac{\psi^2}{\bar\lambda^2}\right\vert.$$

By direct calculation, we have  
\begin{align*}
II & = - \frac{\lambda_{\phi\phi}-2\lambda^2_\phi/\lambda}{e^{-\tau} + \lambda^2_\phi/\lambda^4} \\
& = \Lambda(\phi) \left( 1 + O\left(e^{-\tau}b\psi/ \bar\lambda, e^{-\tau}b\psi'/\bar\lambda', e^{-\tau}b\psi''/ \bar\lambda'' \right) \right).
\end{align*}
Now fix some $\delta>0$, and we see that by the above asymptotics, for $ \delta \leqslant \phi < \sqrt{2(n-1)}$,
\begin{align}\label{eq:estimate-pi3}
\left\vert O\left(e^{-\tau}b \psi/ \bar\lambda, e^{-\tau}b \psi'/\bar\lambda', e^{-\tau}b \psi''/ \bar\lambda'' \right) \right\vert \leqslant b M_2 e^{-\tau}
\end{align}
for some constant $M_2$. \\

For the supersolution, recalling $\Lambda<0$, we have
\begin{align*}
e^{\tau}\mathcal{F}_\phi[\lambda^+_{ext}] & = II + b^+ \Lambda(\phi) + e^{-\tau} a(b^+)^2\psi^2\\
& \geqslant \Lambda \left(b^+ + 1 +  \left(M_2 b^+  + a^2(b^+)^2M_1^2\right)e^{-\tau} \right).
\end{align*}
If we choose $\tau_2$ sufficiently large so that $\left(M_2 b^++a^2(b^+)^2M_1^2\right)e^{-\tau}<1$ for $\tau\geq\tau_2$, then for $ \delta \leqslant \phi < \sqrt{2(n-1)}$, we have for $b^+ < - 2$, 
\begin{align*}
e^{\tau}\mathcal{F}_\phi[\lambda^+_{ext}] & > \Lambda \left( b^+ + 2  \right) > 0.
\end{align*} 

Next, we consider the asymptotics as $\phi\searrow 0$. Using \eqref{eq:barlambda} and \eqref{eq:soln-psi}, we have as $\phi\searrow 0$ the following asymptotics
\begin{align*}
\psi/\bar \lambda & = \bar\lambda\left(\frac{1}{a}\log\phi + \frac{1}{a}+(2n-2)C_1-\frac{1}{2a}\log(2n-2) + O\left(\phi^2 \log(\phi^2)\right) \right),\\
\psi'/\bar \lambda' & = \phi^{-2}\left(\frac{(n-1)}{a^2} + O\left(\phi^2 \log(\phi^2)\right)\right),\\
\psi''/\bar \lambda'' & = \phi^{-2}\left(-\frac{n-1}{a^2}+O\left(\phi^2 \log(\phi^2)\right)\right)
\end{align*}
These asymptotics imply that for $0<\phi \leqslant \delta$, there exists a constant $M_3$ (independent of $\tau$) such that
\begin{align}
\label{eq:bound-M2}
\left\vert \psi\right\vert, \left\vert \frac{\psi}{\bar \lambda} \right\vert\leqslant M_3(-\log\phi),\quad\quad \left\vert \frac{\psi'}{\bar \lambda'} \right\vert, \left\vert \frac{\psi''}{\bar \lambda''} \right\vert \leqslant M_3 \phi^{-2},
\end{align}
and hence we have
\begin{align*}
II & = - \frac{\lambda_{\phi\phi}-2\lambda^2_\phi/\lambda}{e^{-\tau} + \lambda^2_\phi/\lambda^4} \\
& = \Lambda(\phi) \left( 1 +  O(e^{-\tau}b \psi/ \bar\lambda, e^{-\tau}b \psi'/\bar\lambda', e^{-\tau}b \psi''/ \bar\lambda'' )  \right).
\end{align*}

From the known estimates, we have
\begin{align}\label{eq:estimate-pi4}
\left\vert O(e^{-\tau}b \psi/ \bar\lambda, e^{-\tau}b \psi'/\bar\lambda', e^{-\tau}b \psi''/ \bar\lambda'' ) \right\vert \leqslant M_4 \phi^{-2} e^{-\tau},\\
\left\vert \frac{e^{-\tau} ab^2\psi^2}{\Lambda}\right\vert  \leqslant e^{-\tau}b^2 M_4\left( \phi^2(\log\phi)^2 + O(\phi^3(\log\phi)^2) \right).
\end{align}
for some constant $M_4$. It then follows that for $0<R_2\leqslant\phi e^{\tau/2}$, 
\begin{align}
\label{eq:R2}
M_4\phi^{-2}e^{-\tau} & \leqslant M_4 R_2^{-2} .
\end{align}
Now for the supersolution, if we choose $\tau_2$ to be even larger so that for $\tau\geq\tau_2$ and for $R_2e^{-\tau/2}\leqslant \phi<\delta$, we have for such $\phi$ that
\begin{align*}
\left\vert M_4\left( \phi^2(\log\phi)^2 + O(\phi^3(\log\phi)^2) \right)\right\vert \leqslant M_5, 
\end{align*}
for a constant $M_5$ (independent of $\tau$), and therefore
\begin{align*}
e^{\tau}\mathcal{F}_\phi[\lambda^+_{ext}] & = II + b^+ \Lambda(\phi) + e^{-\tau}a(b^+)^2\psi^2\\
& > \Lambda(\phi) \left( b^+ +  1 + e^{-\tau_2}M_5 (b^+)^2 \right) \\
& > 0
\end{align*}
as long as $b^+ < - \left( 1 + e^{-\tau_2}M_5 (b^+)^2\right)$. Indeed, if we choose $\tau_2$ sufficiently large so that $e^{-\tau_2}M_5\ll 1$, then $e^{-\tau_2}M_5 (b^+)^2 + b^+ + 1 =0$ has two real solutions $b_1^+$ and $b_2^+$. (In fact, for the supersolution, we can simply drop the term $e^{-\tau}a(b^+)^2\psi^2$.)

Therefore, if we take $b^{+} < \min \{-2, b_1^+, b_2^+\} < 0$, then $\lambda^{+}_{ext}$ is a supersolution of $\mathcal{F}_\phi[\lambda]=0$ over $R_2 e^{-\tau/2} \leqslant \phi < \sqrt{2(n-1)}$ for all $\tau\geq\tau_2$. \\

By a similar argument, let $b_1^-$, $b_2^-$ be solutions to the quadratic equation $e^{-\tau_2}M_5 (b^-)^2 - b^- -1 =0$ (which always has real solutions) so that $b^- > - \left( 1 - e^{-\tau_2}M_5 (b^+)^2\right)$, and take $b^{-} \geqslant \max \{-1/2, b_1^-, b_2^-\}$. Then we have that $\lambda^{-}_{ext}$ is a subsolution of $\mathcal{F}_\phi[\lambda]=0$ over $R_2 e^{-\gamma\tau}\leqslant \phi < \sqrt{2(n-1)}$ for all $\tau\geqslant\tau_2$.

The lemma is proven.

\end{proof}

\begin{remark}
It follows from the proof of Lemma \ref{exterior-supersub} that we can pick $b^- > 0$. This is convenient for considerations below.
\end{remark}

\section{Upper and lower barriers}\label{barriers} 

According to Lemmata \ref{interior-supersub} and \ref{exterior-supersub}, if we choose $R_2<R_1$, then there is an overlap of the interior and exterior regions where both $\lambda^{\pm}_{int}$ and $\lambda^{\pm}_{ext}$ are defined. In order to show that the regional supersolutions $\lambda^+_{ext}$ and $\lambda^+_{int}$ together with the regional subsolutions $\lambda^-_{ext}$ and $\lambda^-_{int}$ collectively provide upper and lower barriers by the standard $\sup$ and $\inf$ constructions for our mean curvature flow problem, we need to show the following:
\begin{itemize}

\item[(i)] in each region, $\lambda ^-_{int}\leqslant \lambda ^+_{int}$ and $\lambda ^-_{ext}\leqslant \lambda ^+_{ext}$;

\item[(ii)] $\lambda ^+_{int}$ and $\lambda ^+_{ext}$ patch together; i.e., $\sup\{\lambda ^+_{int}, \lambda ^+_{ext}\}$ takes the values of $\lambda ^+_{int}$ and then $\lambda ^+_{ext}$ in moving from the interior to the exterior region. Similarly for $\lambda ^-_{int}$ and $\lambda ^-_{ext}$;

\item[(iii)] the patched supersolutions and subsolutions have the desirable comparison relation throughout; i.e., $\lambda ^-_{ext}\leqslant \lambda ^+_{int}$ and $\lambda ^-_{int}\leqslant \lambda ^+_{ext}$ in the overlapping region where all of them are defined. 

\end{itemize}

We first prove (i), via the following two lemmata. 

\begin{lemma}
\label{int-order}
For $A^{-}>A^{+}$, there exists $\tau_3\geq\tau_1$, where $\tau_1$ is defined in Lemma \ref{interior-supersub}, such that
\begin{align*}
\lambda^{\pm}_{int} = -A^{\pm} + e^{-\tau}F^{\pm}(z) + \left(B^{\pm}\tau+E^{\pm}\right) e^{-\tau} +  \tau e^{-2\tau}D^{\pm}Q^{\pm}(z)
\end{align*}
satisfy $\lambda^{-}_{int} < \lambda^{+}_{int}$ for $|z|\leqslant R_1$ and for $\tau\geq\tau_3$.
\end{lemma}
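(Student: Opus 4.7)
The plan is to isolate the leading-order constant difference and show that every other term is negligible on the compact interval $|z|\leqslant R_1$ once $\tau$ is large enough.

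First, I would compute the difference directly:
\begin{align*}
\lambda^{+}_{int}-\lambda^{-}_{int} &= (A^{-}-A^{+}) + e^{-\tau}\bigl(F^{+}(z)-F^{-}(z)\bigr) \\
&\quad + e^{-\tau}\bigl((B^{+}-B^{-})\tau + (E^{+}-E^{-})\bigr) \\
&\quad + \tau e^{-2\tau}\bigl(D^{+}Q^{+}(z)-D^{-}Q^{-}(z)\bigr).
\end{align*}
By hypothesis $A^{-}-A^{+}>0$ is a strictly positive constant, independent of $z$ and $\tau$.

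Next, I would observe that $F^{\pm}$ and $Q^{\pm}$ are smooth even functions on $\mathbb{R}$ (the former by construction of solutions to \eqref{eq:ODE-F} with initial data encoded by $A^{\pm}$, the latter by \eqref{eq:ODE-Q}), so each of $|F^{\pm}(z)|$ and $|Q^{\pm}(z)|$ is bounded by a constant $M=M(R_1,A^{\pm})$ on the compact set $|z|\leqslant R_1$. Since the constants $B^{\pm}$, $E^{\pm}$, $D^{\pm}$ are all fixed, there is a constant $C=C(R_1,A^{\pm},B^{\pm},E^{\pm},D^{\pm})$ such that, for $|z|\leqslant R_1$ and $\tau\geqslant \tau_1$,
\begin{align*}
\bigl|e^{-\tau}(F^{+}-F^{-}) + e^{-\tau}((B^{+}-B^{-})\tau+(E^{+}-E^{-})) + \tau e^{-2\tau}(D^{+}Q^{+}-D^{-}Q^{-})\bigr| \leqslant C\tau e^{-\tau}.
\end{align*}

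Since $\tau e^{-\tau}\to 0$ as $\tau\to\infty$, I can pick $\tau_3\geqslant \tau_1$ so large that $C\tau e^{-\tau} < (A^{-}-A^{+})/2$ for all $\tau\geqslant \tau_3$. Combining this with the previous identity yields
\begin{align*}
\lambda^{+}_{int}-\lambda^{-}_{int} \geqslant (A^{-}-A^{+}) - C\tau e^{-\tau} > \tfrac{1}{2}(A^{-}-A^{+}) > 0,
\end{align*}
uniformly for $|z|\leqslant R_1$ and $\tau\geqslant \tau_3$, which proves the lemma.

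There is no real obstacle here: the whole point of the construction is that the constants $-A^{\pm}$ are the dominant terms in $\lambda^{\pm}_{int}$, while all corrections are $O(\tau e^{-\tau})$ and hence arbitrarily small at late times on the compact interval in $z$.
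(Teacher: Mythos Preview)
Your proof is correct and follows essentially the same approach as the paper: both isolate the leading constant $A^{-}-A^{+}>0$, use the boundedness of $F^{\pm}$ and $Q^{\pm}$ on the compact interval $|z|\leqslant R_1$ to absorb all remaining terms into an $O(\tau e^{-\tau})$ error, and then take $\tau$ large. Your version is slightly more explicit in tracking the constants, but the argument is the same.
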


\begin{proof}
$F^{\pm}$ and $Q^{\pm}$ are bounded on $|z| \leqslant R_1$. Since $A^{-}>A^{+}$, we have	
\begin{align*}
\lambda^{+}_{int} - \lambda^{-}_{int} & = A^{-} - A^{+} +  (B^{+}-B^{-})\tau e^{-\tau} + (E^{+} - E^{-})e^{-\tau} + O(e^{-\tau}) \\
& = A^{-} - A^{+} + O(\tau e^{-\tau})\\
& > 0 
\end{align*}
for $\tau\geqslant \tau_3$ sufficiently large (larger than $\tau_1$ if necessary).
\end{proof}

\begin{lemma}
\label{ext-order}
For $c^{+}>c^{-}$, there exists $\tau_4\geq\tau_1$ such that
\begin{align*}
\lambda^{\pm}_{ext} & = \bar\lambda^\pm(\phi) + b^{\pm} e^{-\tau}\psi(\phi)
\end{align*}
(as in Lemma \ref{exterior-supersub}) satisfy $\lambda^{-}_{ext}< \lambda^{+}_{ext}$ for $R_2e^{-\tau/2}\leqslant |\phi| <\sqrt{2(n-1)}$ and $\tau\geq\tau_4$.
\end{lemma}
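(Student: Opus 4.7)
The plan is to mimic the argument of Lemma \ref{int-order} but with additional care near $\phi=\sqrt{2(n-1)}$, where the principal difference $\bar\lambda^+ - \bar\lambda^-$ itself degenerates to zero and so must be compared in rate to the perturbation term $(b^+-b^-)e^{-\tau}\psi$.

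First I would establish the algebraic identity
\[
\bar\lambda^+(\phi)-\bar\lambda^-(\phi) = \frac{c^+-c^-}{\bigl(c^+-a\log(2n-2-\phi^2)\bigr)\bigl(c^--a\log(2n-2-\phi^2)\bigr)} = (c^+-c^-)\,\bar\lambda^+(\phi)\bar\lambda^-(\phi),
\]
which is strictly positive on $0\leqslant\phi<\sqrt{2(n-1)}$ since $c^+-c^->0$ and $\bar\lambda^\pm<0$. Decomposing
\[
\lambda^+_{ext}-\lambda^-_{ext} = (c^+-c^-)\,\bar\lambda^+\bar\lambda^- + (b^+-b^-)e^{-\tau}\psi,
\]
the problem reduces to showing that the second (perturbation) term is dominated by the first uniformly on $R_2 e^{-\tau/2}\leqslant|\phi|<\sqrt{2(n-1)}$ for $\tau$ sufficiently large.

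Next I would fix a small $\epsilon\in(0,\sqrt{2(n-1)})$ and split the region into (a) the sub-interval $R_2 e^{-\tau/2}\leqslant|\phi|\leqslant\sqrt{2(n-1)}-\epsilon$, and (b) the boundary layer $\sqrt{2(n-1)}-\epsilon<|\phi|<\sqrt{2(n-1)}$. On (a) the principal term $\bar\lambda^+\bar\lambda^-$ has a strictly positive lower bound depending only on $\epsilon$, while $\psi$ is continuous and hence uniformly bounded; so for $\tau$ large enough the $e^{-\tau}$ factor makes the perturbation negligible.

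The main obstacle lies in region (b), where both the principal and perturbation terms vanish as $\phi\nearrow\sqrt{2(n-1)}$. Here I would invoke the asymptotics already used in the proof of Lemma \ref{exterior-supersub}: $\bar\lambda^\pm \sim -1/|a\log(2n-2-\phi^2)|$, and $\psi \sim \bar\lambda^2/a$ to leading order, as can be read off \eqref{eq:soln-psi}. Therefore $\psi$ and $\bar\lambda^+\bar\lambda^-$ vanish at precisely the same rate, and their ratio stays bounded on (b). Consequently the perturbation $(b^+-b^-)e^{-\tau}\psi$ is uniformly of order $e^{-\tau}$ times the principal difference $(c^+-c^-)\bar\lambda^+\bar\lambda^-$, and is dominated once $\tau\geqslant\tau_4$ is chosen large enough (larger than $\tau_1$ if necessary). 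Combining regions (a) and (b) gives $\lambda^-_{ext}<\lambda^+_{ext}$ on the full region, finishing the proof.
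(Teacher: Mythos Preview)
Your approach matches the paper's: both split into an interior sub-region and a boundary layer near $\sqrt{2(n-1)}$, exploit the identity $\bar\lambda^+-\bar\lambda^-=(c^+-c^-)\bar\lambda^+\bar\lambda^-$, and in the layer compare $\psi$ to $\bar\lambda^2$ via \eqref{eq:soln-psi}. There is, however, one genuine oversight in region (a). You assert that $\psi$ is ``continuous and hence uniformly bounded,'' but the explicit formula \eqref{eq:soln-psi} contains a factor $\bar\lambda^2\log(\phi^2)$, so $\psi$ diverges as $\phi\to 0$. Since the inner boundary of region (a) sits at $|\phi|=R_2 e^{-\tau/2}$, the supremum of $|\psi|$ over (a) grows like $\tau$, not like a constant independent of $\tau$. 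The conclusion survives because $\tau e^{-\tau}\to 0$ still beats the strictly positive lower bound on $(c^+-c^-)\bar\lambda^+\bar\lambda^-$, but you must record this $O(\tau)$ growth explicitly; the paper does so by writing $\psi^{\pm}=(\bar\lambda^{\pm})^2\cdot O(\tau)$ on that sub-region.

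A secondary point: although the statement of Lemma~\ref{exterior-supersub} is phrased with a single $\psi$, the defining ODE \eqref{eq:ODE-psi} involves $\bar\lambda$ and hence depends on $c^\pm$, so the paper's own proof of the present lemma works with two functions $\psi^+,\psi^-$ and the perturbation is $b^+e^{-\tau}\psi^+-b^-e^{-\tau}\psi^-$ rather than $(b^+-b^-)e^{-\tau}\psi$. Your estimates adapt without difficulty, since $\psi^\pm\sim(\bar\lambda^\pm)^2/a$ near $\sqrt{2(n-1)}$ and $(\bar\lambda^\pm)^2$ is comparable to $\bar\lambda^+\bar\lambda^-$ there; but you should be aware of the distinction.
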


\begin{proof}
%Recall the definitions of $\bar\lambda$ and $\psi$ (cf. Lemma \ref{exterior-supersub}). If $c^{+}<c^{-}$, then
Based on the formulas \eqref{eq:ext-supersub} for $\lambda ^+_{ext}$ and $\lambda^-_{ext}$, we have 
\begin{align*}
\lambda^{+}_{ext} - \lambda^{-}_{ext} & = \bar\lambda^+(\phi)-\bar\lambda^-(\phi) + b^+ e^{-\tau} \psi^+-b^-e^{-\tau}\psi^-.
\end{align*}
By the expressions \eqref{eq:barlambda} for $\bar\lambda$ and \eqref{eq:soln-psi} for $\psi$, for a small $\delta>0$, we have
\begin{enumerate}

\item[(a)] For $R_2 e^{-\tau/2}\leqslant |\phi|\leqslant \sqrt{2(n-1)}-\delta$, there is $\epsilon>0$, such that 
$$\bar\lambda^+ -\bar\lambda^->\epsilon,$$
$$\psi^{\pm}=(\bar\lambda^{\pm})^2\cdot O(\tau),$$ 
and so $\lambda_{ext}^+-\lambda_{ext}^->0$ for large $\tau$. 

\item[(b)] For $\sqrt{2(n-1)}-\delta<\phi<\sqrt{2(n-1)}$, by the expansion of $\psi/\bar\lambda$ near $\sqrt{2(n-1)}$ in the proof of Lemma \ref{exterior-supersub}, we have 
$$\psi^{\pm}=(\bar\lambda^\pm)^2\left(\frac{1}{a}+o(1)\right)$$
and by simple calculation, for $\bar\lambda^+>\bar\lambda^-$ both negative but close to $0$, we have 
$$\bar\lambda^+-\bar\lambda^-\geqslant C|\bar\lambda^{\pm}|^2.$$
So $\lambda_{ext}^+-\lambda_{ext}^->0$ for large $\tau$. 

\end{enumerate}
The lemma then follows by taking $\tau_4$ sufficient large.

\end{proof}

Now, we move on to justify (ii), i.e., the gluing by taking supremum and infimum. Recall that Lemma \ref{interior-supersub} holds for any $R_1>0$ and Lemma \ref{exterior-supersub} holds for any $R_2>0$. Below, we  choose $1\ll R_2<R_1$ and patch together $\lambda ^+_{int}$ and $\lambda ^+_{ext}$, and $\lambda ^-_{int}$ and $\lambda ^-_{ext}$ in the region defined by $\{ R_2 < z < R_1 \}$. To this end, we need the following lemma.

\begin{lemma}\label{patch} 
For a fixed integer $n\geqslant 2$, let $\lambda^{+}_{int}$ and $\lambda^{-}_{int}$ be as discussed in Lemmata \ref{interior-supersub} and \ref{int-order}, and $\lambda^{+}_{ext}$ and $\lambda^{-}_{ext}$ as discussed in Lemmata \ref{exterior-supersub} and \ref{ext-order}. There are properly chosen constants $A^{\pm}>0$, $c^{\pm}>0$, $B^{+},b^->0$, and $B^-, b^+ <0$ satisfying
\begin{align}
A^{\pm} & = \frac{1}{c^{\pm}-a\log(2n-2)}, \\
B^{\pm} & = -\frac{b^\pm}{2a}(A^\pm)^2,
\end{align}
such that for sufficiently large $R_1$ and $R_2$ for Lemmata \ref{interior-supersub} and \ref{exterior-supersub}, we have for $\tau\geqslant \tau_5$ with some sufficiently large $\tau_5$ that the pair of functions
$$\lambda^{+}_{int} - \lambda^{+}_{ext},\quad\text{and} \quad \lambda^{-}_{ext} - \lambda^{-}_{int}$$ 
both strictly increase from negative to positive in the $z$-interval $(R_2, R_1)$. 

\end{lemma}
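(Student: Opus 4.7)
The plan is to express $\lambda^{\pm}_{int}(z,\tau)$ and $\lambda^{\pm}_{ext}(\phi,\tau)$ in the common variable pair $(z,\tau)$ over the overlap annulus $R_2 \leqslant z \leqslant R_1$ via $\phi = z e^{-\tau/2}$, and then compare the resulting expansions term by term. Using the large-$z$ asymptotic of $F^{\pm}$ in \eqref{eq:asymp-F}, the interior form becomes
\begin{align*}
\lambda^{\pm}_{int}(z,\tau) = -A^{\pm} + \frac{a(A^{\pm})^2}{2(n-1)} z^2 e^{-\tau} - \frac{(A^{\pm})^2}{a}(\log z)\, e^{-\tau} + B^{\pm}\tau e^{-\tau} + O(e^{-\tau}) + O(\tau e^{-2\tau})
\end{align*}
uniformly on the overlap. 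On the exterior side, Taylor-expanding $\bar\lambda^{\pm}$ at $\phi = 0$ reproduces the same $-A^{\pm}$ constant and the same $\tfrac{a(A^{\pm})^2}{2(n-1)} z^2 e^{-\tau}$ quadratic, while the small-$\phi$ behaviour of $\psi$ in \eqref{eq:soln-psi} (whose leading contribution is $(A^{\pm})^2 a^{-1} \log\phi$) produces, after substituting $\phi = z e^{-\tau/2}$,
\begin{align*}
\lambda^{\pm}_{ext}(z e^{-\tau/2},\tau) = -A^{\pm} + \frac{a(A^{\pm})^2}{2(n-1)} z^2 e^{-\tau} + \frac{b^{\pm}(A^{\pm})^2}{a}(\log z)\, e^{-\tau} - \frac{b^{\pm}(A^{\pm})^2}{2a}\tau e^{-\tau} + O(e^{-\tau}) + O\bigl((z^4 + \tau z^2) e^{-2\tau}\bigr).
\end{align*}

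The identification $A^{\pm} = (c^{\pm} - a\log(2n-2))^{-1}$ cancels the $-A^{\pm}$ constants; the quadratic $z^2 e^{-\tau}$ terms cancel automatically; and the prescribed relation $B^{\pm} = -b^{\pm}(A^{\pm})^2/(2a)$ is engineered precisely to cancel the $\tau e^{-\tau}$ terms. What survives is
\begin{align*}
\lambda^{+}_{int} - \lambda^{+}_{ext} &= -\frac{(A^{+})^2(1+b^{+})}{a}(\log z)\, e^{-\tau} + K^{+} e^{-\tau} + (\text{higher order}),\\
\lambda^{-}_{ext} - \lambda^{-}_{int} &= \frac{(A^{-})^2(1+b^{-})}{a}(\log z)\, e^{-\tau} + K^{-} e^{-\tau} + (\text{higher order}),
\end{align*}
where the $z$-independent constants $K^{\pm}$ collect $E^{\pm}$ together with the bounded contributions from $\bar\lambda^{\pm}$, $F^{\pm}$ (including $-(A^{\pm})^2 a^{-1}\log a$), and the constant part of $\psi$. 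Because Lemma \ref{exterior-supersub} and the remark following its proof permit $b^{+} < -2$ and $b^{-} > 0$, both leading log coefficients $-(1+b^{+})(A^{+})^2/a$ and $(1+b^{-})(A^{-})^2/a$ are strictly positive.

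Consequently the $z$-derivative of each residual is, to leading order, the positive quantity $\sim (\text{const})/z \cdot e^{-\tau}$, so strict monotonic increase on $(R_2, R_1)$ follows once the error terms are uniformly absorbed. For the sign change, first fix $R_1$ large enough that the logarithmic growth on $(R_2, R_1)$ exceeds any fixed $|K^{\pm}|$; then choose $E^{\pm}$ so that the constants $K^{\pm}$ place each residual strictly negative at $z = R_2$ and strictly positive at $z = R_1$. Monotonic increase then guarantees exactly one zero-crossing inside the interval.

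The principal technical obstacle is uniform error control over the full overlap. The $O(z^4 e^{-2\tau})$ Taylor remainder from $\bar\lambda^{\pm}$ and the analogous $O(\tau z^2 e^{-2\tau})$ term from $\psi$ would defeat monotonicity if $R_1$ were allowed to grow with $\tau$, so $R_1$ must be fixed independent of $\tau$; then $R_2 \in (1, R_1)$ is chosen, and $\tau_5$ is taken large enough that every higher-order correction is pointwise dominated by the positive log leading term throughout $(R_2, R_1)$. This completes the verification.
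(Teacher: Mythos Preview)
Your proposal is correct and follows essentially the same route as the paper's proof: expand both $\lambda^{\pm}_{int}$ (via the large-$z$ asymptotics of $F^{\pm}$) and $\lambda^{\pm}_{ext}$ (via the small-$\phi$ expansions of $\bar\lambda^{\pm}$ and $\psi$ at $\phi = z e^{-\tau/2}$), cancel the constant and $\tau e^{-\tau}$ terms using exactly the relations $A^{\pm} = (c^{\pm}-a\log(2n-2))^{-1}$ and $B^{\pm} = -b^{\pm}(A^{\pm})^2/(2a)$, identify the surviving $\log z$ leading term with positive coefficient (since $b^{+}<-1$ and $b^{-}>-1$), and then adjust the free constants $E^{\pm}$ to place a single zero in a fixed $z$-interval. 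One small expository wrinkle: your phrasing ``fix $R_1$ large enough that the logarithmic growth exceeds any fixed $|K^{\pm}|$'' is slightly circular, since $K^{\pm}$ contains the yet-to-be-chosen $E^{\pm}$; the paper orders things more cleanly by first fixing any large interval (it uses $(R,10R)$) on which monotonicity holds, and only then choosing $E^{\pm}$ to place the zero inside --- but this is a matter of presentation, not a gap.
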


\begin{proof}

As in the proofs of Lemmata \ref{interior-supersub} and \ref{exterior-supersub}, we prove this lemma for $\phi \in [0, \sqrt{2(n-1)})$; the proof for negative values of $\phi$ follows from evenness. Furthermore, we only consider a bounded $z$ interval, on which $\phi$ is very close to $0$ for large $\tau$.   

In the interior region, using the asymptotic expansion of $F(z)$ in \eqref{eq:asymp-F}, we have that as $z\to \infty$,
\begin{align*}
\lambda^{+}_{int} & = - A^{+} + B^{+}\tau e^{-\tau} \\
& \quad + e^{-\tau}\left(\frac{a (A^{+})^2}{2(n-1)}z^2 - \frac{(A^+)^2}{a}\log(az) + E^{+} + O(z^{-2})\right)\\
& \quad +  D^{+}\tau e^{-2\tau}Q^{+}(z).
\end{align*}

In the exterior region, using the asymptotic expansion that readily follows from the explicit expression for $\psi(\phi)=\psi\left(z e^{-\tau/2}\right)$ in \eqref{eq:soln-psi} and denoting $\alpha^{+}:= [c^{+}-a\log(2n-2)]^{-1}$, we have that for $\phi$ near $0$, 
\begin{align*}
\lambda^{+}_{{ext}} & = \bar\lambda^{+}\left(z e^{-\tau/2}\right) + b^{+}e^{-\tau}\psi\left(z e^{-\tau/2}\right)\\
& = -\alpha^{+} + \frac{a(\alpha^+)^2}{(2n-2)} z^2 e^{-\tau} + O\left(z^4e^{-2\tau}\right)\\
& \quad + e^{-\tau}b^+\left((\alpha^+)^2 - \frac{a(\alpha^+)^3}{(n-1)}z^2 e^{-\tau} \right)\cdot\left( d + \frac{1}{a}\log|z| - \frac{1}{2a}\tau \right)\\
&\quad + O\left( z^2 e^{-2\tau}\left(1+\log|z|+\tau\right)\right)\\
& = -\alpha^+ - \frac{b^+ (\alpha^+)^2}{2a}\tau e^{-\tau}\\
& \quad + e^{-\tau}\left(\frac{a(\alpha^+)^2}{2(n-1)}z^2 + \frac{(\alpha^+)^2b^+}{a}\log|z| + (\alpha^+)^2 b^+ d \right)\\
& \quad + O\left( z^2 e^{-2\tau}\left(1+\log|z|+\tau\right)\right),
\end{align*}
where $d=\frac{1}{a}+C_1(2n-2)+\frac{1}{2a}\log(2n-2)>0$. 

It then follows that
\begin{align*}
\lambda^{+}_{int} - \lambda^{+}_{ext} & = \left( \alpha^{+} - A^{+} \right) + \left(B^{+} +\frac{b^+ (\alpha^+)^2}{2a} \right)\tau e^{-\tau}\\
& \quad + e^{-\tau}\left(\frac{a (A^+)^2}{2(n-1)}z^2 - \frac{a (\alpha^+)^2}{2(n-1)}z^2 - \frac{(A^+)^2}{a}\log(az) - \frac{(\alpha^+)^2b^+}{a}\log|z| \right)\\
& \quad + e^{-\tau}\left( E^+ - (\alpha^+)^2b^+d + O\left(z^{-2}\right) \right)\\
& \quad + O\left(\tau e^{-2\tau}Q^{+}(z)\right) +  O\left( z^2 e^{-2\tau}\left(1+\log|z|+\tau\right)\right).
\end{align*}

Now choose $A^+$, $B^+$, $c^+$ and $b^+$ satisfying 
\begin{align}
A^{+} & = \alpha^+ =  \frac{1}{c^+ - a\log(2n-2)}>0, \label{eq:Acplus}\\
B^+ & = -\frac{b^+}{2a} \left(A^+\right)^2>0,
\end{align}
noticing that $b^+<0$, then the constant terms and the $\tau e^{-\tau}$ terms are eliminated. So it follows that
\begin{align*}
e^{\tau}(\lambda^{+}_{int} - \lambda^{+}_{ext}) \\
& = \frac{(A^+)^2}{a}(-b^+-1)\log|z|\\
&\quad + E^+ - (A^+)^2\left(\frac{\log a}{a} + b^+d\right)\\
& \quad + O\left(z^{-2}\right)\\
& \quad + O\left(\tau e^{-\tau}Q^{+}(z)\right) +  O\left( z^2 e^{-\tau}\left(1+\log|z|+\tau\right)\right).
\end{align*}
The derivative of $e^{\tau}(\lambda^{+}_{int} - \lambda^{+}_{ext})$ with respect to $z$ is given by 
\begin{align*}
e^{\tau}(\lambda^{+}_{int} - \lambda^{+}_{ext})_z
& = \frac{(A^+)^2}{a}(-b^+-1)z^{-1} + O\left(z^{-3}\right)\\
& \quad + O\left(\tau e^{-\tau}(Q^{+})'(z)\right) +  O\left( z e^{-\tau}\left(1+\log|z|+\tau\right)\right).
\end{align*}

So far, we have chosen $a>0$ and $d>0$. In light of the above expressions, for any choice of $A^{_+}$ according to \eqref{eq:Acplus}, Lemma \ref{interior-supersub} holds for $\lambda^{+}_{{int}}$, and we can choose $b^+$ such that $-b^+-1>0$ while Lemma \ref{exterior-supersub} holds for $\lambda^+_{{ext}}$. Consequently, we have the following observations regarding $\lambda^{+}_{int}-\lambda^{+}_{ext}$ for sufficiently large $\tau$:
\begin{enumerate}
\item $e^{\tau}(\lambda^{+}_{int} - \lambda^{+}_{ext})$ is smooth and strictly increasing with respect to $z$ on any interval $(R, 10R)$ where $R\gg 1$. 

\item By adjusting the value of $E^{+}$, which is a constant independent of $\tau$, we can make sure $e^{\tau}(\lambda^{+}_{int} - \lambda^{+}_{ext})$ has only one zero at some $z\in(R, 10R)$ as long as (1) holds.	
\end{enumerate}

Letting $R_2 = R\gg 1$ and $R_1 = 10R$, we have that $\lambda^+_{{int}}-\lambda^+_{{ext}}$ strictly increases from negative to positive in the $z$-interval $(R_2, R_1)$

In the same way, we can deal with $\lambda^-_{int}$ and $\lambda^-_{ext}$. In particular, we can choose the same interval $(R_2, R_1)$ by adjusting the previously chosen one if necessary.

Therefore, the lemma is proved.
\end{proof}
\begin{remark}
In Lemma \ref{patch}, the choices of $R_1$ and $R_2$ are independent of the constants $A^\pm$.
\end{remark}

\begin{remark}
The choices of $A^{\pm}$ and of $c^{\pm}$ in Lemma \ref{patch} are compatible with those in Lemmata \ref{int-order} and \ref{ext-order}.
\end{remark}

We can now patch the regional supersolutions and subsolutions, thereby producing the global supersolutions and subsolutions, which are consequently upper and lower barriers. More precisely, for $|\phi|\in[0,\sqrt{2(n-1)})$ and for $\tau\geq\tau_5$, we define $\lambda^{+}=\lambda^{+}(\phi,\tau)$ by
\begin{align}
\label{eq:upperbarrier}
\lambda^{+} := 
\left\{
\begin{array}{cc}
\lambda^{+}_{int} & |\phi|\leqslant R_2 e^{-\tau/2} \vspace{6pt} \\ 
\inf\left\{\lambda^{+}_{int}, \lambda^{+}_{ext} \right\} & R_2 e^{-\tau/2}< |\phi| < R_1 e^{-\tau/2} \vspace{6pt} \\
\lambda^{+}_{ext} & R_1 e^{-\tau/2} \leqslant |\phi| < \sqrt{2(n-1)}
\end{array}
\right. ,
\end{align}
and similarly we define $\lambda^{-}=\lambda^{-}(\phi,\tau)$ by
\begin{align}
\label{eq:lowerbarrier}
\lambda^{-} := 
\left\{
\begin{array}{cc}
\lambda^{-}_{int} & |\phi|\leqslant R_2 e^{-\tau/2} \vspace{6pt} \\
\sup\left\{\lambda^{-}_{int}, \lambda^{-}_{ext} \right\} & R_2 e^{-\tau/2}< |\phi| < R_1 e^{-\tau/2} \vspace{6pt} \\
\lambda^{-}_{ext} & R_1 e^{-\tau/2} \leqslant |\phi| < \sqrt{2(n-1)}
\end{array}
\right. ,
\end{align}
where the above Lemma \ref{patch} is crucial in justifying the legitimate transition from the interior construction to the exterior construction. Some properties of $\lambda^{\pm}$ are summarised in the following proposition.

\begin{prop}
\label{prop-patch}
For a fixed integer $n\geqslant 2$, let $\lambda^{+}$ and $\lambda^{-}$ be defined as in \eqref{eq:upperbarrier} and \eqref{eq:lowerbarrier} respectively. There exists a sufficiently large $\tau_0$ such that the following hold true for $-\sqrt{2(n-1)}<\phi<\sqrt{2(n-1)}$ and $\tau\geq\tau_0$:
\begin{enumerate}

\item[(B1)] $\lambda^{+}$ and $\lambda^{-}$ are supersolution ($+$) and subsolutions ($-$) for equation \eqref{eq:lambda(phi,tau)} respectively; \\

\item[(B2)] $\lambda^{-} < \lambda^{+}$; \\

\item[(B3)] near $\phi = 0$, $\lambda^{\pm} = \lambda^{\pm}_{int}$, and near $\phi=\sqrt{2(n-1)}$, $\lambda^{\pm} = \lambda^{\pm}_{ext}$; \\

\item[(B4)] for any $\tau\in[\tau_0,\infty)$, $\lim\limits_{|\phi|\nearrow \sqrt{2(n-1)}} \lambda^{\pm} = 0$.

\end{enumerate}

\end{prop}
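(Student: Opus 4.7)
The plan is to verify (B1)--(B4) in an order driven by their logical dependencies. I would first address (B3), which is an immediate consequence of Lemma \ref{patch}: on the transition annulus $R_2 e^{-\tau/2} < |\phi| < R_1 e^{-\tau/2}$, the difference $\lambda^{+}_{int} - \lambda^{+}_{ext}$ strictly increases from a negative to a positive value in $z = \phi e^{\tau/2}$, so the infimum defining $\lambda^{+}$ coincides with $\lambda^{+}_{int}$ on the inner portion of the annulus and with $\lambda^{+}_{ext}$ on the outer portion. Hence near $\phi = 0$ the barrier $\lambda^{+}$ equals $\lambda^{+}_{int}$, and near $|\phi| = \sqrt{2(n-1)}$ it equals $\lambda^{+}_{ext}$; the symmetric argument with $\lambda^{-}_{ext} - \lambda^{-}_{int}$ yields the claim for $\lambda^{-}$.

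With the patching picture from (B3) in hand, (B1) reduces to the classical fact that the pointwise infimum of two classical supersolutions of a quasilinear parabolic equation is a viscosity supersolution, and dually for subsolutions. On the two pure regions this is supplied by Lemmata \ref{interior-supersub} and \ref{exterior-supersub}. On the overlap, $\lambda^{+}$ agrees with a single smooth branch except at the unique crossing point $z_{*}$ guaranteed by Lemma \ref{patch}; at $z_{*}$ the strict positivity of $(\lambda^{+}_{int} - \lambda^{+}_{ext})_z$ gives a downward ``concave kink'' of $\lambda^{+}$ that precludes any $C^2$ test function from touching $\lambda^{+}$ from below without first touching one of the two smooth branches, so the viscosity supersolution inequality for $\mathcal{F}_\phi$ is inherited from whichever branch is active. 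The analogous consideration for $\sup\{\lambda^{-}_{int},\lambda^{-}_{ext}\}$ produces a convex kink and gives the subsolution property for $\lambda^{-}$.

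For (B2) I would split the domain into the three regions in \eqref{eq:upperbarrier}--\eqref{eq:lowerbarrier}. In the pure interior and pure exterior regions, $\lambda^{-} < \lambda^{+}$ is exactly Lemma \ref{int-order} and Lemma \ref{ext-order}, respectively. On the overlap, $\sup\{\lambda^{-}_{int},\lambda^{-}_{ext}\} < \inf\{\lambda^{+}_{int},\lambda^{+}_{ext}\}$ requires all four pairwise comparisons; two of them are the cited lemmata, and for the mixed pair I would exploit the matching relations $A^{\pm} = 1/(c^{\pm} - a\log(2n-2))$ from Lemma \ref{patch}. These force both $\lambda^{+}_{int}$ and $\lambda^{+}_{ext}$ to share the leading constant $-A^{+}$ and both $\lambda^{-}_{int}, \lambda^{-}_{ext}$ to share $-A^{-}$ on the overlap; since $A^{-} > A^{+}$ the gap $-A^{-} < -A^{+}$ is $O(1)$ in $\tau$, which dominates the $O(\tau e^{-\tau})$ corrections once $\tau \geqslant \tau_0$ is sufficiently large. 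Finally, (B4) follows by direct calculation from \eqref{eq:barlambda} and \eqref{eq:soln-psi}: as $|\phi| \nearrow \sqrt{2(n-1)}$, $\log(2n-2-\phi^2) \to -\infty$ forces $\bar\lambda^{\pm} \to 0^{-}$, and the identity $x \log x \to 0$ as $x \to 0^{+}$ applied to \eqref{eq:soln-psi} gives $\psi(\phi) = \bar\lambda^2(\phi)\bigl[\tfrac{1}{a} + o(1)\bigr] \to 0$, so that $\lambda^{\pm}_{ext} = \bar\lambda^{\pm} + b^{\pm} e^{-\tau}\psi \to 0$ for each fixed $\tau$.

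The main obstacle I anticipate is the rigorous handling of the kink in (B1). The inf/sup preservation of classical supersolutions is folklore, but here $\mathcal{F}_\phi$ is quasilinear with singular first-order coefficients and the two candidate branches are originally parametrised by different spatial variables ($z$ for the interior construction, $\phi$ for the exterior). The cleanest route is to first re-express $\lambda^{\pm}_{int}(z,\tau)$ in the $\phi$-variable on the overlap via $z = \phi e^{\tau/2}$, verify that the chain-rule pushforward indeed converts the $\mathcal{T}_z$-inequalities of Lemma \ref{interior-supersub} into $\mathcal{F}_\phi$-inequalities, and only then invoke the standard kink argument in a single common coordinate chart.
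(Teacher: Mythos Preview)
Your proposal is correct and follows essentially the same route as the paper: (B3) from Lemma~\ref{patch}, (B4) from the explicit formulae for $\bar\lambda^{\pm}$ and $\psi$, (B2) on the overlap via the leading-constant gap $-A^{-}<-A^{+}$ dominating the $O(\tau e^{-\tau})$ corrections, and (B1) by the inf/sup-of-super/subsolutions principle. The kink issue you flag as the main obstacle is exactly what the paper isolates as well; it handles it not via viscosity test functions but by a direct first-contact argument (Remark~\ref{comp_rmk} and Appendix~\ref{comp_rmk_appx}) showing that the one-sided derivative jumps at the corner have the favourable sign, so a first touching point between a smooth solution and $\lambda^{\pm}$ can never occur at the kink.
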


\begin{proof} 

Take $\tau_0\geqslant \tau_5$, where $\tau_5$ is defined in Lemma \ref{patch}. 

Condition (B1) follows from the standard min-max property of supersolutions and subsolutions and from Lemma \ref{patch}. 

Condition (B3) follows from the definition of $\lambda^{\pm}$. 

Condition (B4) follows from Condition (B3) and $\lim\limits_{|\phi|\nearrow \sqrt{2(n-1)}} \lambda^{\pm}_{ext} = 0$ by definition. 

For Condition (B2), in light of Lemmata \ref{int-order} and \ref{ext-order}, we only need to show that for $R_2 e^{-\tau/2}< |\phi|< R_1 e^{-\tau/2}$ (or equivalently, for $R_2<z<R_1$), 
$$\sup\{\lambda^-_{int}, \lambda^-_{ext}\}<\inf\{\lambda^+_{int}, \lambda^+_{ext}\}.$$
Since $\lambda^-_{int}<\lambda^+_{int}$ and $\lambda^-_{ext}<\lambda^+_{ext}$, it suffices to show that
$$\lambda^-_{ext}<\lambda^+_{int}\quad\text{and}\quad\lambda^-_{int}<\lambda^+_{ext}.$$ 

In order to prove these inequalities, we recycle the calculations appearing in the proof of Lemma \ref{patch} as follows. 
\begin{align*}
\lambda^{+}_{int} - \lambda^{-}_{ext} & = \left( - A^{+} + \frac{1}{c^- - a\log(2n-2)} \right) + \left(B^{+} +\frac{b^-}{2a[c^--a\log(2n-2)]^2} \right)\tau e^{-\tau}\\
& \quad + e^{-\tau}\left(\frac{a (A^+)^2}{2n}z^2 - \frac{b^-}{a[c^--a\log(2n-2)]^2}\log|z| + o(z^2) \right)\\
& \quad \quad + e^{-\tau}\left(E^{+} - \frac{b^-d}{[c^--a\log(2n-2)]^2}  \right) +O(z^2e^{-\tau}) \\ 
& \quad \quad \quad + b^- O(\tau z^2e^{-2\tau}) + b^- O(e^{-2\tau}z^2\log z).
\end{align*}
\begin{align*}
\lambda^{-}_{int} - \lambda^{+}_{ext} & = \left( - A^{-} + \frac{1}{c^+ - a\log(2n-2)} \right) + \left(B^{-} +\frac{b^+}{2a[c^+-a\log(2n-2)]^2} \right)\tau e^{-\tau}\\
& \quad + e^{-\tau}\left(\frac{a (A^-)^2}{2n}z^2 - \frac{b^+}{a[c^+-a\log(2n-2)]^2}\log|z| + o(z^2) \right)\\
& \quad \quad + e^{-\tau}\left(E^{-} - \frac{b^+d}{[c^+-a\log(2n-2)]^2}  \right) +O(z^2e^{-\tau}) \\ 
& \quad \quad \quad + b^+ O(\tau z^2e^{-2\tau}) + b^+ O(e^{-2\tau}z^2\log z).
\end{align*}
The constants satisfy 
$$c^+ >c^- >a\log (2n-2),$$
$$0<A^+=\frac{1}{c^+-a\log(2n-2)}<\frac{1}{c^--a\log(2n-2)}=A^-;$$
hence, we see that for $z\in (R_2, R_1)$ (so $|z|$ and $|\log z|$ are bounded), the leading (constant) terms have favourable signs and so for $\tau$ sufficiently large, 
$$\lambda^-_{ext}<\lambda^+_{int}\quad\text{and}\quad\lambda^-_{int}<\lambda^+_{ext},$$ 
which concludes the proof of Condition (B2). 

\end{proof}

We now prove a comparison principle for any pair of smooth functions such that one of them is a smooth subsolution 
of equation $\mathcal{F}_\phi[\lambda] = 0$ (cf. \eqref{eq:lambda(phi,tau)}) and the other is a smooth supersolution of the same equation. These functions need not be the subsolution $\lambda^{-}$ or supersolution $\lambda^{+}$ constructed above, but of course, the purpose of this result is to show that they serve as barriers for a solution. 

\begin{prop}
\label{comparison}{(Comparison principle for $\mathcal{F}_{\phi}[\lambda]=0$)}
For a fixed integer $n\geqslant 2$ and any $\bar\tau\in(\tau_0, \infty)$, where $\tau_0$ is arbitrary, suppose that $\zeta^{+}$ and $\zeta^{-}$ are any smooth non-positive supersolutions and subsolutions of the equation $\mathcal{F}_{\phi}[\lambda]=0$ respectively. Assume that
\begin{itemize}

\item[(C1)] $\zeta^{-}(\phi,\tau_0) \leqslant \zeta^{+}(\phi,\tau_0)$ for $\phi\in(-\sqrt{2(n-1)},\sqrt{2(n-1)})$,\\

\item[(C2)] $\zeta^{-}(-\sqrt{2(n-1)},\tau) \leqslant \zeta^{+}(-\sqrt{2(n-1)},\tau)$ for $\tau\in[\tau_0,\bar\tau]$,\\

\item[(C3)] $\zeta^{-}(\sqrt{2(n-1)},\tau) \leqslant \zeta^{+}(\sqrt{2(n-1)},\tau)$ for $\tau\in[\tau_0,\bar\tau]$, 

\end{itemize}

\noindent Then $\zeta^{-}(\phi,\tau) \leqslant \zeta^{+}(\phi,\tau)$ over $ [-\sqrt{2(n-1)},\sqrt{2(n-1)}] \times [\tau_0,\bar\tau]$.

\end{prop}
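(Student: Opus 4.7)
The plan is to apply a parabolic maximum principle to the difference $w := \zeta^- - \zeta^+$ on the closed cylinder $[-\sqrt{2(n-1)}, \sqrt{2(n-1)}] \times [\tau_0, \bar\tau]$. Hypotheses (C1)-(C3) already give $w \leqslant 0$ on the parabolic boundary, so it remains to rule out positive values in the interior. To that end, fix $\epsilon > 0$ and a large constant $K > 0$, and consider the auxiliary function $\tilde w := e^{-K\tau}(w - \epsilon(\tau - \tau_0))$, which is still nonpositive on the parabolic boundary. If $\sup \tilde w > 0$, the supremum is attained at some interior point $(\phi_*, \tau_*)$ with $\phi_* \in (-\sqrt{2(n-1)}, \sqrt{2(n-1)})$ and $\tau_* \in (\tau_0, \bar\tau]$, where the usual first- and second-order maximum principle relations hold. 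In terms of $w$ these translate to $w_\phi = 0$ (hence $\zeta^-_\phi = \zeta^+_\phi =: p$), $w_{\phi\phi} \leqslant 0$, and $w_\tau \geqslant K w + \epsilon > 0$, with $w > 0$ at $(\phi_*, \tau_*)$.

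Next I would subtract the inequality $\mathcal{F}_\phi[\zeta^+] \geqslant 0$ from $\mathcal{F}_\phi[\zeta^-] \leqslant 0$ and evaluate at $(\phi_*, \tau_*)$. The drift term $(\tfrac{n-1}{\phi} - \tfrac{\phi}{2}) w_\phi$ vanishes, and the zeroth-order term contributes $a((\zeta^-)^2 - (\zeta^+)^2) = a(\zeta^- + \zeta^+) w$, whose coefficient is $\leqslant 0$ since both $\zeta^\pm$ are nonpositive. For the quasilinear second-order term, I would write $\zeta^-_{\phi\phi} = \zeta^+_{\phi\phi} + w_{\phi\phi}$ and apply the mean value theorem to the scalar map $\lambda \mapsto (\zeta^+_{\phi\phi} - 2p^2/\lambda)(1 + e^{\tau_*} p^2/\lambda^4)^{-1}$ between $\zeta^-$ and $\zeta^+$; this produces a linearization of the form $\alpha w_{\phi\phi} + \beta_1 w$, where $\alpha = (1 + e^{\tau_*} p^2/(\zeta^-)^4)^{-1} > 0$ is the diffusion coefficient and $\beta_1$ is bounded in terms of the smooth data $\zeta^\pm$ on the compact cylinder. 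Collecting, at $(\phi_*, \tau_*)$ one obtains an inequality of the form $w_\tau - \alpha w_{\phi\phi} - \beta w \leqslant 0$ for a uniformly bounded coefficient $\beta$.

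Combining this with $\alpha w_{\phi\phi} \leqslant 0$ and $w_\tau \geqslant K w + \epsilon$ yields $(K - \beta) w + \epsilon \leqslant 0$ at the max. Choosing $K > \sup |\beta|$ makes the left-hand side strictly positive, a contradiction. Therefore $\sup \tilde w \leqslant 0$ for every $\epsilon > 0$, and letting $\epsilon \searrow 0$ gives $w \leqslant 0$ throughout the cylinder, which is the desired comparison. The main technical obstacle lies in controlling the linearization of the quasilinear diffusion term: the singular quantities $\lambda_\phi^2/\lambda$ and $\lambda_\phi^2/\lambda^4$ appearing in $\mathcal{F}_\phi$ degenerate where $\lambda = 0$, i.e., on the lateral boundary $|\phi| = \sqrt{2(n-1)}$, so one must verify that $\alpha$ and $\beta$ remain under control up to that boundary. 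This is guaranteed by the hypothesis that $\zeta^\pm$ are smooth supersolutions and subsolutions on the full closed cylinder, which forces $\mathcal{F}_\phi[\zeta^\pm]$ and hence the linearization coefficients to be well-defined and bounded there.
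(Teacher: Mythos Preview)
Your approach coincides with the paper's: both introduce an exponentially weighted difference, locate a hypothetical first interior extremum, subtract the super/subsolution inequalities there (using $\zeta^+_\phi=\zeta^-_\phi$ and $\zeta^+_{\phi\phi}\geqslant\zeta^-_{\phi\phi}$), and obtain a contradiction by taking the weight larger than the bounded zeroth-order linearization coefficient. The paper works with $v = e^{-\mu\tau}(\zeta^+-\zeta^-)+\epsilon$ and handles the quasilinear diffusion term by explicit algebraic manipulation rather than the mean value theorem, but this is cosmetic.

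One computational slip: with your auxiliary function $\tilde w = e^{-K\tau}\bigl(w-\epsilon(\tau-\tau_0)\bigr)$, the condition $\tilde w_\tau\geqslant 0$ at the maximum only yields
\[
w_\tau \;\geqslant\; K\bigl(w-\epsilon(\tau_*-\tau_0)\bigr)+\epsilon,
\]
not $w_\tau\geqslant Kw+\epsilon$ as you claim. With the correct inequality the contradiction does not close, because the extra term $-K\epsilon(\tau_*-\tau_0)$ grows with $K$ and can absorb the gain from choosing $K$ large. The fix is immediate: take instead $\tilde w = e^{-K\tau}w-\epsilon$ (this is precisely $-v$ in the paper's notation). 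Then $\tilde w_\tau\geqslant 0$ gives $w_\tau\geqslant Kw$ directly, which combined with $w_\tau\leqslant\beta w$ and $w>0$ forces $K\leqslant\beta$, contradicting $K>\sup|\beta|$. Your discussion of the boundedness of the linearization coefficients near the lateral boundary is at the same level of detail as the paper's, which simply asserts that the relevant terms are ``bounded terms independent of $\mu$'' by smoothness on the compact cylinder.
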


\begin{proof}

Take any $\epsilon>0$ and define $v:= e^{-\mu \tau}(\zeta^+ - \zeta^{-}) + \epsilon$ for some $\mu>0$ to be chosen. We claim that $v > 0$ on $[-\sqrt{2(n-1)},\sqrt{2(n-1)}] \times [\tau_0,\bar\tau]$. 

To prove this, suppose the contrary. Then it follows from the assumptions (C1)--(C3) and from the continuity of functions over the compact space-time region that there must be a first time $\tau_*\in(\tau_0,\bar\tau)$ and an interior point $\phi_*\in(-\sqrt{2(n-1)},\sqrt{2(n-1)})$ such that
\begin{align*}
v(\phi_*, \tau_*) = 0
\end{align*}
which is the spatial minimum and minimum for time up to $\tau_*$. So at $(\phi_*,\tau_*)$, we have
\begin{align*}
\p_\tau\vert_{\phi} v  \leqslant 0, &  \quad\quad\quad \zeta^{+}_{\phi\phi}  \geqslant \zeta^{-}_{\phi\phi}, \\
\zeta^{+}_{\phi}  = \zeta^{-}_{\phi}, & \quad\quad\quad 0\geqslant \zeta^{-} > \zeta^{+} = \zeta^{-} -\epsilon e^{\mu \tau_*}.
\end{align*}
Consequently at $(\phi_*,\tau_*)$, we have

\begin{align*}
0 
&\geqslant e^{\mu \tau_*} \p_\tau\vert_{\phi} v \\
&= \p_\tau\vert_{\phi} (\zeta^{+} - \zeta^{-})  - \mu(\zeta^{+}-\zeta^{-}) \\
&\geqslant \frac{(\zeta^+_{\phi\phi}-2(\zeta^{+}_\phi)^2/\zeta^+)}{1+e^{\tau_*}(\zeta^{+}_\phi)^2/(\zeta^{+})^4} - \frac{(\zeta^-_{\phi\phi}-2(\zeta^{-}_\phi)^2/\zeta^-)}{1+e^{\tau_*}(\zeta^{-}_\phi)^2/(\zeta^{-})^4} \\
&\quad + \left(\frac{n-1}{\phi_*}-\frac{\phi_*}{2}\right)\left(\zeta^+_\phi - \zeta^-_\phi\right) + a [(\zeta^-)^2 - (\zeta^+)^2] \\
&\quad\quad - \mu(\zeta^{+}-\zeta^{-}) \\
&\geqslant \frac{(\zeta^-_{\phi\phi}-2(\zeta^{+}_\phi)^2/\zeta^+)}{1+e^{\tau_*}(\zeta^{+}_\phi)^2/(\zeta^{+})^4} - \frac{(\zeta^-_{\phi\phi}-2(\zeta^{-}_\phi)^2/\zeta^-)}{1+e^{\tau_*}(\zeta^{-}_\phi)^2/(\zeta^{-})^4} \\
&\quad + a [(\zeta^-)^2 - (\zeta^+)^2] - \mu(\zeta^{+}-\zeta^{-}) \\
&\geqslant \zeta^-_{\phi\phi}\left(\frac{(\zeta^+)^4}{(\zeta^+)^4+e^{\tau_*}(\zeta^{+}_\phi)^2} - \frac{(\zeta^-)^4}{(\zeta^-)^4+e^{\tau_*}(\zeta^{-}_\phi)^2}\right) \\
&\quad -\frac{2(\zeta^{+}_\phi)^2}{\zeta^+\left(1+e^{\tau_*}(\zeta^{+}_\phi)^2/(\zeta^{+})^4\right)} + \frac{2(\zeta^{-}_\phi)^2}{\zeta^-\left(1+e^{\tau_*}(\zeta^{+}_\phi)^2/(\zeta^+)^4\right)} \\
&\quad\quad + a [(\zeta^-)^2 - (\zeta^+)^2] - \mu(\zeta^{+}-\zeta^{-}) \\
& \geqslant (\zeta^+ - \zeta^-) \left(\frac{e^{\tau_*}(\zeta^+_\phi)^2\zeta^-_{\phi\phi}(\zeta^+ + \zeta^-)\left((\zeta^+)^2+(\zeta^-)^2\right)}{\left(e^{\tau_*}(\zeta^+_\phi)^2 + (\zeta^+)^4\right)\left(e^{\tau_*}(\zeta^+_\phi)^2 + (\zeta^-)^4\right)}\right. \\
&\quad + 	\left. \frac{2(\zeta^+_\phi)^2}{\zeta^+\zeta^-(1+e^{\tau_*}(\zeta^{+}_\phi)^2/(\zeta^{+})^4)} - a\left(\zeta^+ + \zeta^-\right) - \mu \right)\\
& = -\epsilon e^{\mu \tau_*} [ (\text{bounded terms independent of $\mu$}) - \mu],
\end{align*}  
where the ``(bounded terms independent of $\mu$)'' arising in the second to the last step come from the smooth non-positive assumption of $\zeta^{\pm}$ for $\tau\in[\tau_0, \bar\tau]$. For fixed $\epsilon>0$, if we choose $\mu$ sufficiently large, then at $(\phi_*,\tau_*)$, 
\begin{align*}
0 & \geqslant \p_\tau\vert_{\phi} v > 0,
\end{align*}
which is a contradiction. Hence, the claim is true. Since $\epsilon>0$ is arbitrary, the proposition follows.
\end{proof}

\begin{remark}\label{comp_rmk}
We point out that our supersolution and subsolution in Proposition \ref{prop-patch} are defined for $\phi\in (-\sqrt{2(n-1)}, \sqrt{2(n-1)})$ and are continuous and piecewise smooth on their domains of definition. In fact, Proposition \ref{comparison} applies in the piecewise smooth setting. See the discussion in Appendix \ref{comp_rmk_appx}.
\end{remark}

We end this section by discussing the relation between the barriers, $\lambda^\pm$ and a formal solution $\widetilde\lambda$. Given a constant $c>0$ suppose $c^\pm$ are chosen such that $c\in(c^-, c^+)$ and that $A=1/\left(c-a\log(2n-2)\right)\in (A^+, A^-)$ where $A^\pm := 1/\left(c^\pm-a\log(2n-2)\right)$. Now consider the following formal solutions defined in the interior and exterior regions respectively for all $\tau\geq\tau_5$, where $\tau_5$ is defined in Lemma \ref{patch}:
\begin{align*}
\widetilde\lambda_{int}(z,\tau) & = -A + e^{-\tau}F(z),   \quad\quad\quad |z|\in[0, R_1];\\
\widetilde\lambda_{ext}(\phi,\tau) & = -\frac{1}{c-a\log(2n-2-\phi^2)}, \quad |\phi|\in[R_2e^{-\tau/2},\sqrt{2(n-1)}).
\end{align*}
We see that (for example, as a consequence of the proofs of Lemmata \ref{int-order} and \ref{ext-order}) for all $\tau\geq\tau_5$,
\begin{align*}
\lambda^{-}_{\text{int}} & < \widetilde\lambda_{int} < \lambda^{+}_{\text{int}}, \quad |z|\in[0, R_1]; \\
\lambda^{-}_{\text{ext}} & < \widetilde\lambda_{ext} < \lambda^{+}_\text{{ext}}, \quad |\phi|\in[R_2e^{-\tau/2},\sqrt{2(n-1)}).
\end{align*}

%%%%%%%%%%%%%%%%%%%%%%%%%%%%%%
%%%%%%%%%%%%%%%%%%%%%%%%%%%%%%

\section{Proof of theorem \ref{thmmain}}\label{existence}

We have thus far shown that we have barriers for the mean curvature flow equation. In this section, we first prove a lemma which allows us to show that at $t\nearrow T$, the highest curvature of our convex rotationally symmetric MCF solution occurs at the tip.

The (smooth) hypersurface in $\mathbb{R}^{n+1}$ with the rotation profile $r=u(x)>0$ for $x\geqslant x_0$ has the principal curvatures  
$$\kappa_1=\cdots=\kappa_{n-1}=\frac{1}{u(1+u^2_x)^{1/2}}, \quad \kappa_n=-\frac{u_{xx}}{(1+u^2_x)^{3/2}},$$
where the first $n-1$ indices correspond to the rotation and $n$ to the graph direction. One defines $\mathfrak R:=\kappa_n/\kappa_1$.

\begin{lemma}\label{tip}
For a fixed integer $n\geqslant 2$ and any $\bar t\in[t_0,T)$, $T<\infty$, 
for the above complete noncompact convex rotationally symmetric graphical solution $\Gamma_t$ to the MCF, with uniformly bounded curvature for $t_0\leqslant t\leqslant \bar t$ , assuming $\mathfrak{R}\leqslant C$, where $C\geq 1$, for the initial hypersurface $\Gamma_0$, then $\mathfrak{R}\leqslant C$ for $t\in[0,\bar t]$.
\end{lemma}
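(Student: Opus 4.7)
\emph{Proof plan.} The plan is to apply a maximum principle to $\mfr{R} = \kappa_n/\kappa_1$ on the evolving hypersurface $\Gamma_t$, using the fact that the values of $\mfr{R}$ at the two ``ends'' of the noncompact hypersurface are pinned down by geometric constraints. At the tip $x=a$, smoothness of the rotationally symmetric graph forces umbilicity, so $\kappa_1 = \kappa_n$ and $\mfr{R}\equiv 1$ there for all $t$, which is $\leqslant C$ by the hypothesis $C\geqslant 1$. As $x\nearrow\infty$, convexity together with the asymptotic cylindricality and the uniform curvature bound on $[t_0,\bar t]$ yields $\kappa_n\to 0$ while $\kappa_1$ stays bounded away from zero; hence $\mfr{R}\to 0$ uniformly. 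Consequently any spatial supremum of $\mfr{R}(\cdot,t)$ is attained in a bounded portion of the hypersurface and is strictly less than $C$ at both ends.

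Next I would derive the evolution equation for $\mfr{R}$. With $W:=\sqrt{1+u_x^2}$, the principal curvatures read $\kappa_1=\cdots=\kappa_{n-1}=(uW)^{-1}$ and $\kappa_n=-u_{xx}/W^3$, and the rotational symmetry preserves the splitting of the Weingarten operator into rotational and graph eigenspaces along the flow. The classical Simons-type MCF identities then give evolution equations of the form
\begin{equation*}
\p_t \kappa_j \;=\; \mathcal{L}\kappa_j + |A|^2\kappa_j,\qquad j=1,n,
\end{equation*}
where $\mathcal{L}$ is a linear second-order operator (uniformly parabolic on compact subsets, thanks to the assumed curvature bound) and $|A|^2=(n-1)\kappa_1^2+\kappa_n^2$. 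A direct computation using $\p_t(\kappa_n/\kappa_1)=\p_t\kappa_n/\kappa_1-\mfr{R}\,\p_t\kappa_1/\kappa_1$ together with the product rule for $\mathcal{L}$ shows that the $|A|^2$ reaction terms cancel identically, giving
\begin{equation*}
\p_t \mfr{R} \;=\; \mathcal{L}\mfr{R} + 2\langle\nabla\mfr{R},\nabla\log\kappa_1\rangle_{\mathcal{L}},
\end{equation*}
where the first-order drift is \emph{linear} in $\nabla\mfr{R}$ and hence vanishes at any spatial critical point of $\mfr{R}$. In particular there is no zeroth-order reaction term.

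With these two ingredients the argument concludes as follows. Suppose for contradiction that the lemma fails, and let $t_*\in(0,\bar t]$ be the first time at which $\sup_{\Gamma_{t_*}}\mfr{R}=C$. By the boundary analysis above, this supremum is realized at an interior point $p_*\in\Gamma_{t_*}$ strictly away from the tip and from spatial infinity, so $(p_*,t_*)$ is a genuine interior parabolic maximum. At $(p_*,t_*)$ we have $\p_t\mfr{R}\geqslant 0$, $\nabla\mfr{R}=0$, and $\mathcal{L}\mfr{R}\leqslant 0$; the evolution equation then forces $\p_t\mfr{R}=\mathcal{L}\mfr{R}=0$ there, and the strong maximum principle forces $\mfr{R}\equiv C$ on a parabolic neighborhood of $(p_*,t_*)$, contradicting $\mfr{R}=1<C$ at the tip.

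The main obstacle will be making this argument fully rigorous on the noncompact hypersurface: one has to ensure that the spatial supremum is in fact attained and that the strong maximum principle applies. This is handled by combining the quantitative decay $\mfr{R}\to 0$ at spatial infinity with the assumed uniform curvature bound on $[t_0,\bar t]$, either via a compact exhaustion of $\Gamma_t$ whose boundary values on large annuli are controlled by the decay, or via an Omori--Yau type argument on complete manifolds with bounded geometry. The uniform curvature bound is precisely what guarantees uniform parabolicity of $\mathcal{L}$ on the compact regions involved.
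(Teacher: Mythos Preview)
Your overall strategy---apply the parabolic maximum principle to $\mfr{R}$, using $\mfr{R}=1$ at the tip and $\mfr{R}\to 0$ at the cylindrical end as boundary control---is exactly the paper's approach. The gap is in your derivation of the evolution equation for $\mfr{R}$: the claim that $\kappa_1$ and $\kappa_n$ each satisfy $\p_t\kappa_j=\mathcal{L}\kappa_j+|A|^2\kappa_j$ with the \emph{same} scalar operator $\mathcal{L}$, and hence that the reaction terms cancel in the quotient, is not correct. The Simons identity $\p_t h^i_j=\Delta h^i_j+|A|^2 h^i_j$ holds for the Weingarten $(1,1)$-tensor, but $(\Delta h)^n_n\neq\Delta\kappa_n$ as scalar functions: the eigendirections $e_1,\dots,e_{n-1},e_n$ are not parallel on the rotationally symmetric hypersurface, and the covariant derivatives of the frame produce cross terms between $\kappa_1$ and $\kappa_n$. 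The actual evolution (computed directly from the profile equation, and cited in the paper from \cite{ADS19}) is
\[
\mfr{R}_t=\frac{\mfr{R}_{xx}}{1+u_x^2}-\frac{2u_x}{u(1+u_x^2)}(1-\mfr{R})\mfr{R}_x+\frac{2u_x^2}{u^2(1+u_x^2)}\bigl[(1-\mfr{R}^2)+(n-2)(1-\mfr{R})\bigr],
\]
which \emph{does} have a zeroth-order reaction term. Fortunately this term factors as a positive multiple of $(1-\mfr{R})(\mfr{R}+n-1)$, which is $\leqslant 0$ whenever $\mfr{R}\geqslant 1$; this is precisely what makes the paper's argument work: at a spatial maximum with $\mfr{R}_{\max}\geqslant 1$ one obtains the ODE inequality $(\mfr{R}_{\max})_t\leqslant 0$, and hence $\mfr{R}_{\max}(t)\leqslant C$ since $\mfr{R}_{\max}(0)\leqslant C$.

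A secondary issue: your first-contact argument (``let $t_*$ be the first time $\sup\mfr{R}=C$, then strong maximum principle forces $\mfr{R}\equiv C$, contradicting $\mfr{R}=1<C$ at the tip'') is more delicate than necessary and breaks down when $C=1$, since then $\sup\mfr{R}=1$ is already attained at the tip for all $t$ and there is no ``first time''. The paper avoids this by simply reading off the differential inequality for $\mfr{R}_{\max}$, which covers $C\geqslant 1$ uniformly.
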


\begin{proof}
For the hypersurface evolving by the MCF (\ref{eq:u(x,t)}), we have  
$$\mathfrak{R}=-\frac{u\cdot u_{xx}}{1+u^2_x}=(1-n)-u\cdot u_t.$$

Since $\Gamma_0$ is convex and MCF preserves convexity, we have $u_{xx}<0$ and $\mathfrak{R}>0$ for all $t\in [0, \bar t]$. This and the noncompactness of the hypersurface $\Gamma_t$ for all $t$ imply that $u_x>0$.

The following evolution of $\mathfrak{R}$ is derived in \cite{ADS19}: 
$$\mathfrak{R}_t=\frac{\mathfrak{R}_{xx}}{1+u^2_x}-\frac{2u_x}{u(1+u^2_x)}(1-\mathfrak{R})\mathfrak{R}_x+\frac{2u^2_x}{u^2(1+u^2_x)}[(1-\mathfrak{R}^2)+(n-2)(1-\mathfrak{R})].$$
By the boundedness of curvature and uniqueness, the MCF solution preserves rotational symmetry; in particular, $\mathfrak{R}=1$ at the tip, which is an umbilical point, along the flow. It then follows from the maximum principle that
$$(\mathfrak{R}_{\max})_t\leqslant \frac{2u^2_x}{u^2(1+u^2_x)}(1-\mathfrak{R}_{\max})(\mathfrak{R}_{\max}+n-1),$$ 
from which we obtain $\mathfrak{R}_{\max}\leqslant C$ for $t\in [0, \bar t]$; here we use the fact that $\mathfrak{R}_{\max}(0)\leqslant C$. So the lemma is proven. 
\end{proof}

\begin{remark}\label{ratio_R_rmk}
	We discuss the condition $\mathfrak{R}\leqslant C$ on $\Gamma_0$ in Appendix \ref{ratio_R_appx}.
\end{remark}

We note that $\mathfrak{R}\leqslant C$ implies $C+C u^2_x+u\cdot u_{xx}\geqslant 0$. Also, if $i=1, \cdots, n-1$, then for $\kappa^{-1}_i=u(1+u^2_x)^{1/2}$ we have  
\begin{align*}
(\kappa^{-1}_i)_x=u_x(1+u^2_x)^{-1/2}(1+u^2_x+u\cdot u_{xx})\geqslant 0,
\end{align*}
which means that $\kappa_i$, where $1\leqslant i\leqslant n-1$, achieves the maximum at the tip $u=0$. Then Lemma \ref{tip} can be strengthened to the following version, which says that the highest curvature for our convex rotationally symmetric solution of the MCF is always achieved at the tip, as discussed in \cite{IW19}.

\begin{lemma}\label{tip1}
Under the hypotheses of Lemma \ref{tip}, assuming $\mathfrak{R}\leqslant 1$ for $\Gamma_0$, then for any $t\in [0, \bar t]$, $\mathfrak{R}\leqslant 1$ and the maximum curvature $\sup_{\Gamma_t}|h|$ occurs at the tip of the hypersurface. 
\end{lemma}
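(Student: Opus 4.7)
The plan is to combine Lemma \ref{tip} applied at the borderline value $C=1$ with the monotonicity computation flagged in the paragraph immediately preceding the statement, and then reduce all components of $h$ to a single principal curvature which is controlled at the tip.

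First I would invoke Lemma \ref{tip} with the constant $C=1$: since by hypothesis $\mathfrak{R}\leqslant 1$ on $\Gamma_0$, the lemma gives $\mathfrak{R}\leqslant 1$ on $\Gamma_t$ for every $t\in[0,\bar t]$. Recalling $\mathfrak{R}=-u\cdot u_{xx}/(1+u_x^2)$, this inequality is equivalent to
\begin{equation*}
1+u_x^2+u\cdot u_{xx}\geqslant 0
\end{equation*}
throughout $[0,\bar t]$.

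Next I would exploit the calculation $(\kappa_i^{-1})_x=u_x(1+u_x^2)^{-1/2}(1+u_x^2+u\cdot u_{xx})$ for $i=1,\dots,n-1$ given just before the statement. Strict convexity together with the noncompact graphical assumption forces $u_x>0$ (as observed in the proof of Lemma \ref{tip}), so combining with the displayed inequality yields $(\kappa_i^{-1})_x\geqslant 0$. Hence $\kappa_i^{-1}$ is nondecreasing in $x$, which means $\kappa_1=\cdots=\kappa_{n-1}$ is maximised at the tip $x=a$ (where $u=0$). At the tip the hypersurface is umbilical, so $\kappa_n=\kappa_1$ there, and the common value equals $\sup_{\Gamma_t}\kappa_1$.

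Finally, I would bound $|h|$ by controlling each principal curvature. Since $\mathfrak{R}\leqslant 1$ and convexity gives $\kappa_n>0$, we have $0<\kappa_n=\mathfrak{R}\,\kappa_1\leqslant\kappa_1$ pointwise; therefore
\begin{equation*}
|h|^2=(n-1)\kappa_1^2+\kappa_n^2\leqslant n\,\kappa_1^2,
\end{equation*}
and the right-hand side attains its maximum at the tip by the monotonicity just established. At the tip itself $\kappa_n=\kappa_1$, so $|h|^2=n\,\kappa_1^2$ there, showing that equality holds and $\sup_{\Gamma_t}|h|$ is in fact attained at the tip. I do not anticipate a serious obstacle here; the only subtlety is making sure $u_x>0$ and that the umbilical relation at the tip is consistent with the cylindrical-coordinate irregularity flagged in the introduction, both of which are already handled in the setup preceding Lemma \ref{tip}.
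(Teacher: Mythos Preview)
Your proposal is correct and follows precisely the approach the paper outlines: the paper does not give a separate proof of Lemma~\ref{tip1} but rather presents it as an immediate strengthening of Lemma~\ref{tip} via the monotonicity computation $(\kappa_i^{-1})_x\geqslant 0$ displayed in the paragraph just before the statement. Your write-up supplies exactly those details---specializing Lemma~\ref{tip} to $C=1$, invoking the monotonicity of $\kappa_1^{-1}$, and then using $\kappa_n\leqslant\kappa_1$ together with umbilicity at the tip to conclude that $|h|$ is maximised there---so there is nothing to add.
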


We now prove the main theorem of the paper. 

\begin{proof}[Proof of Theorem \ref{thmmain}] 

We fix the dimension $n\geqslant 2$. Let $\tau_0\geqslant \tau_5$, where $\tau_5$ is given in Lemma \ref{patch}.

We begin by constructing the initial data for the MCF flow by patching formal solutions in the interior and exterior regions at $\tau=\tau_0$. Given $a>0$, we fix some constant $c>a\log(2n-2)$ (any such $c$ works) and define
\begin{align}\label{constantA}
A:=\frac{1}{c-a\log(2n-2)}.
\end{align} 
We then find constants $c^+$ and $c^-$, e.g., $c^{\pm}:=c\pm\tilde\epsilon$ for any fixed $\tilde\epsilon>0$, such that $c\in(c^{-}, c^{+})$. We now define constants $A^\pm :=1/\left(c^\pm-a\log(2n-2)\right)$) as in Lemma \ref{patch}, then $A\in(A^{+}, A^{-})$ because $c\in(c^{-}, c^{+})$. Recalling $z=\phi e^{\tau/2}$ and denoting
\begin{align*}
C_0:= A-\frac{1}{c-a\log(2n-2-R_1^2e^{-\tau_0})},
\end{align*}
we define
\begin{align}\label{eq:lambda0}
\widehat\lambda_0(\phi) := \left\{
\begin{array}{lr}
-A + e^{-\tau_0}F(z)- e^{-\tau_0}F(R_1) + C_0, \quad 0\leqslant |z|\leqslant R_1, \\
-1/\left(c-a\log(2n-2-\phi^2)\right), \quad R_1e^{-\tau_0/2}\leqslant |\phi|<\sqrt{2(n-1)},
\end{array}
\right.
\end{align}
where $R_1$ is defined in Lemma \ref{patch}. For $|z|\in [0,R_1]$, $\widehat\lambda$ is given by the profile of a bowl soliton, for which $\mathfrak{R}\leqslant 1$ \cite[Lemma 3.5]{ADS19}. For $|z|\geqslant R_1$, it is straightforward (cf. the proof of Lemma \ref{init_smooth} in Appendix \ref{init_smooth_appx}) to verify that there exists constant $C=C(a,c)$ such that $\mathfrak{R}\leqslant CR_1^{-3}$. We choose $R_1$ such that $100CR_1^{-4} < a$; in particular, Lemma \ref{patch} holds for this $R_1$.

For any $\epsilon>0$ sufficiently small, by taking $\tau_0$ large enough, we have $|- e^{-\tau_0}F(R_1) + C_0|< \epsilon/10$ and for $R_2\leqslant |z|\leqslant R_1$, we have $|e^{-\tau_0}F(z)- e^{-\tau_0}F(R_1)| < \epsilon/10$. Then at $\tau=\tau_0$ (taking even larger $\tau_0$ if needed), we have  
\begin{itemize}
\item For $0\leqslant |z| \leqslant R_1$, i.e., $0\leqslant |\phi| \leqslant R_1 e^{-\tau_0/2}$: 
$$\widehat\lambda_0-\epsilon<\lambda_{int}^-< \widehat\lambda_0 <\lambda_{int}^+ < \widehat\lambda_0+\epsilon.$$

\item For $R_2\leqslant |z| \leqslant R_1$, i.e., $R_2 e^{-\tau_0/2} \leqslant |\phi| \leqslant R_1e^{-\tau_0/2}$:
$$\widehat\lambda_0-\epsilon<\lambda_{int}^-,\;\lambda_{ext}^-<\widehat\lambda_0<\lambda_{int}^+,\;\lambda_{ext}^+ < \widehat\lambda_0+\epsilon.$$

\item For $R_1\leqslant |z|$, i.e., $R_1 e^{-\tau_0/2}\leqslant |\phi|<\sqrt{2(n-1)}$:
$$\widehat\lambda_0-\epsilon<\lambda_{ext}^-<\widehat\lambda_0< \lambda_{ext}^+<\widehat\lambda_0+\epsilon$$

\end{itemize}
So in light of (\ref{eq:upperbarrier}) and (\ref{eq:lowerbarrier}), we conclude that for all $|\phi|\in[0,\sqrt{2(n-1)})$, 
\begin{align*}
\lambda^{-}(\phi,\tau_0) < \widehat\lambda_0(\phi) < \lambda^{+}(\phi,\tau_0), \quad 
\left\vert\lambda^+ - \lambda^- \right\vert < 2\epsilon. 
\end{align*}
It follows from the construction that $\widehat\lambda_0$ is continuous and piecewise smooth, and that 
\begin{align*} 
\lim\limits_{|\phi|\nearrow \sqrt{2(n-1)}} \widehat\lambda_0 = 0
\end{align*}

To apply Lemma \ref{tip}, we need the following Lemma, the proof of which is contained in Appendix \ref{init_smooth_appx}.
\begin{lemma}\label{init_smooth}
We can smooth $\widehat\lambda_0$ to obtain a smooth function $\lambda_0$ such that $\lambda^{-}(\cdot,\tau_0) < \lambda_0 < \lambda^{+}(\cdot,\tau_0)$ for $|\phi|\in[0,\sqrt{2(n-1)})$ and $\lim\limits_{|\phi|\nearrow \sqrt{2(n-1)}} \lambda_0 = 0$. Moreover, after rescaling back to the $(x,u)$-coordinates, the function $u(x)$ corresponding to $\lambda_0$ has the following properties:
$$u(x_0)=0,\quad u_x>0,\quad u_{xx}<0\quad \text{and}\quad \mathfrak{R}\leqslant CR_1^{-3}, $$
for a constant $C=C(a,c)$. In particular, we choose $R_1$ such that $100 C R_1^{-4} < a$.
\end{lemma}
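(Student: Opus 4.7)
The plan is to construct $\lambda_0$ from $\widehat\lambda_0$ by applying a local mollification near the single transition point $|\phi|=R_1 e^{-\tau_0/2}$ (equivalently $|z|=R_1$), the only place where $\widehat\lambda_0$ fails to be smooth, and then verify that the smoothed function inherits all the required properties. By construction $\widehat\lambda_0$ is continuous at the transition; using the large-$z$ asymptotics \eqref{eq:asymp-F} of $F$ on the interior side and the explicit exterior formula in \eqref{eq:lambda0}, one computes the one-sided derivatives of $\widehat\lambda_0$ at $|z|=R_1$. This is essentially the matching calculation already carried out in the proof of Lemma \ref{patch} (now with $A^\pm=A$ and $c^\pm=c$), and it shows that the one-sided first $\phi$-derivatives of $\widehat\lambda_0$ agree to leading order, with a jump controlled by a negative power of $R_1$, and the jump in the second $\phi$-derivative is uniformly bounded.

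Next, I would perform a standard mollification. Fix a small $\delta>0$ and a nonnegative bump $\eta_\delta$ supported on $[-\delta,\delta]$ with total mass one; replace $\widehat\lambda_0$ inside a small neighbourhood of $|\phi|=R_1 e^{-\tau_0/2}$ by its convolution $\widehat\lambda_0\ast\eta_\delta$, patching smoothly to the original $\widehat\lambda_0$ outside via a cut-off. The resulting $\lambda_0$ is then $C^\infty$ on $(-\sqrt{2(n-1)},\sqrt{2(n-1)})$ and, by symmetry of the construction, even in $\phi$.

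The strict barrier inequalities follow from Proposition \ref{prop-patch}: since $\lambda^{-}(\cdot,\tau_0)<\widehat\lambda_0<\lambda^{+}(\cdot,\tau_0)$ and $\|\lambda_0-\widehat\lambda_0\|_{L^\infty}\to 0$ as $\delta\to 0$, taking $\delta$ small gives $\lambda^{-}<\lambda_0<\lambda^{+}$. The limit $\lim_{|\phi|\nearrow\sqrt{2(n-1)}}\lambda_0=0$ is preserved because the mollification is supported away from the endpoints. Converting back via $y=-1/\lambda_0$, $x=y+a\tau_0$, $u=\phi e^{-\tau_0/2}$, the tip condition $u(x_0)=0$ is immediate from $F(0)=F_z(0)=0$. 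On each piece of $\widehat\lambda_0$ one has $y_\phi>0$ and $y_{\phi\phi}>0$---explicit on the exterior since $y_\phi=2a\phi/(2n-2-\phi^2)$ and $y_{\phi\phi}=2a(2n-2+\phi^2)/(2n-2-\phi^2)^2$, and from the bowl-soliton structure on the interior---so $u_x>0$ and $u_{xx}<0$ hold there. On the exterior, a direct calculation yields
\begin{equation*}
\mathfrak{R}=\frac{\phi e^{-\tau_0} y_{\phi\phi}}{y_\phi(y_\phi^2+e^{-\tau_0})}=\frac{(2n-2+\phi^2)(2n-2-\phi^2)}{(2n-2-\phi^2)^2+4a^2 z^2},
\end{equation*}
which, together with $\phi^2<2(n-1)$ and $z\geqslant R_1$, yields $\mathfrak{R}\leqslant C(a,c)R_1^{-3}$ for $R_1$ sufficiently large; on the interior, $\mathfrak{R}\leqslant 1$ by \cite[Lemma 3.5]{ADS19}.

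The main obstacle is to preserve strict concavity $u_{xx}<0$ (equivalently $y_{\phi\phi}>0$) and the curvature ratio bound across the mollified band. Because $y_{\phi\phi}$ has a uniformly bounded jump at $|z|=R_1$ while each one-sided limit of $y_{\phi\phi}$ is bounded below by an explicit positive constant independent of $R_1$, one can choose $\delta$ small enough in relation to this lower bound (and compatibly with the barrier room of Step 3) so that the convolved profile remains strictly convex, and $\mathfrak{R}$ in the band is controlled by interpolation between the interior bound $\mathfrak{R}\leqslant 1$ and the exterior bound $\mathfrak{R}\leqslant C R_1^{-3}$. The smallness condition $100CR_1^{-4}<a$ is then achieved by taking $R_1$ large, which is compatible with the previous choice of $R_1$ from Lemma \ref{patch}.
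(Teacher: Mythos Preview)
Your outline is in the right spirit, but the smoothing step as written has a genuine gap, and the paper's execution differs in a way worth noting.

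\textbf{Where the paper differs.} The paper does \emph{not} mollify $\widehat\lambda_0$. Instead it works directly with the profile $u(y)$. The key computation (which you allude to but never carry out) is the comparison of the two one-sided values of $u_y$ at the corner $z=R_1$: using $u_y=(T-t_0)/(y^2\widehat\lambda_z)$ and the large-$z$ expansion of $F$ versus the explicit exterior formula, one finds that $u_y$ from the interior (bowl) side is strictly \emph{larger} than from the exterior side. Thus $u(y)$ is piecewise concave with a \emph{concave} corner, and can be replaced on a fixed $z$-interval $I_\zeta=[(1-\zeta)R_1,(1+\zeta)R_1]$ by any smooth concave interpolant matching the endpoint values and derivatives. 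The bound on $\mathfrak{R}$ in the band then comes from the mean-value estimate
\[
|u_{yy}|\;\leqslant\;\frac{u_y\big|_{z=(1-\zeta)R_1}-u_y\big|_{z=(1+\zeta)R_1}}{|I_\zeta|_y},
\]
together with explicit asymptotics for the numerator and denominator in $R_1$; multiplying by $u\approx R_1(T-t_0)$ gives the stated decay of $\mathfrak{R}$ in $R_1$.

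\textbf{The gap in your argument.} Mollifying $\widehat\lambda_0$ does not automatically yield $y_{\phi\phi}>0$ (equivalently $u_{xx}<0$), because the passage $\lambda\mapsto y=-1/\lambda\mapsto u$ is nonlinear. Your sentence ``each one-sided limit of $y_{\phi\phi}$ is bounded below by a positive constant, so one can choose $\delta$ small'' conflates two different second derivatives: convolving $\widehat\lambda_0$ controls $(\lambda_0)_{\phi\phi}$, not $y_{\phi\phi}$, and in fact the jump in $(\widehat\lambda_0)_\phi$ produces a Dirac-type contribution of size $J/\delta$ to $(\lambda_0)_{\phi\phi}$ that you never analyse. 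For this to even have the right sign you must first verify $J>0$ (equivalent to the paper's $u_y$ ordering above), which you do not. More seriously, your claim that ``$\mathfrak{R}$ in the band is controlled by interpolation between $\mathfrak{R}\leqslant 1$ and $\mathfrak{R}\leqslant CR_1^{-3}$'' is not an argument: $\mathfrak{R}$ is a ratio of second-order data and does not interpolate. You need a quantitative upper bound on $|u_{yy}|$ (or on $y_{\phi\phi}$) in the band, and this requires fixing the width of the smoothing window in the $z$-scale (not letting $\delta\to 0$) and estimating as the paper does. With the window $\sim\zeta R_1$ in $z$, the difference-quotient bound on $u_{yy}$ is what produces the polynomial decay of $\mathfrak{R}$ in $R_1$; a pure mollification with $\delta\to 0$ would make $\mathfrak{R}$ blow up in the band.

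In short: replace the mollification of $\widehat\lambda_0$ by a direct smooth concave interpolation of $u$ on $I_\zeta$, after first checking the sign of the $u_y$ jump; then bound $\mathfrak{R}$ via the difference quotient of $u_y$ across $I_\zeta$ rather than by ``interpolation''.
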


Lemma \ref{init_smooth} allows us to apply Lemma \ref{tip}. The construction and smoothing process actually yields an open\footnote{The open condition only applies near where we smooth the corner. The prescribed geometries near the tip and the spatial infinity are unaffected.} set of such smooth functions $\lambda_0$. Moreover, if we vary the parameters $c$ and $R_1$, then we get a family $\mathscr{G}_0$ of distinct smooth complete noncompact convex rotationally symmetric hypersurfaces $\Gamma_0$ that are asymptotic to a cylinder of radius $\sqrt{2(T-t_0)(n-1)}$. Thus by the main result of \cite{SS14}, the MCF starting from a hypersurface $\Gamma\in\mathscr{G}_0$ must have a smooth solution up to the time $T$ which is exactly the vanishing time of the boundary sphere of the defining domain ball under its own MCF. Let $\lambda(\phi,\tau)$ correspond to such a MCF solution. Since $\lambda^{-} \leqslant \lambda \leqslant \lambda^{+}$ on $(-\sqrt{2(n-1)},\sqrt{2(n-1)})$ at $\tau=\tau_0$ and also for $|\phi|=\sqrt{2(n-1)}$, the comparison principle (Proposition \ref{comparison}) implies that the solution is always trapped between the barriers; i.e.,
$$\lambda^{-} \leqslant \lambda \leqslant \lambda^{+}\quad \text{over}\quad(-\sqrt{2(n-1)},\sqrt{2(n-1)})\times [\tau_0, \infty).$$ 
In particular, the asymptotics of $\lambda^{-}$ and $\lambda^{+}$ as $\phi\nearrow\sqrt{2(n-1)}$ imply that 
\begin{align*} 
\lambda(\phi,\tau) \sim -\frac{1}{c-\log(2n-2-\phi^2)}
\end{align*}
as $|\phi|\nearrow\sqrt{2(n-1)}$ for all $\tau\geq\tau_0$. This implies Item (3) of Theorem \ref{thmmain}.

Now we proceed to justify the accurate curvature blow-up rate and the singularity model as stated in Items (1) and (2) of Theorem \ref{thmmain}. To study the behaviour of such a MCF solution near the tip as $\tau\nearrow\infty$, we work with $y(z,\tau)$ instead of $\lambda(z,\tau)$. Recall that $y(z,\tau)$ evolves by equation \eqref{eq:y(z,tau)}. Let $\tilde A = -1/A$. Define $\tilde p(z,\tau)$ by the relation
\begin{align}
y(\phi,\tau) = \tilde A + e^{-\tau} \tilde p(z,\tau).
\end{align}
Then $\tilde p(z,\tau)$ satisfies the PDE, $\mathcal{B}[\tilde p] = 0$ where
\begin{align*}
\mathcal{B}[\tilde p] & = a - \left(\frac{\tilde{p}_{zz}}{1+\tilde{p}^2_z} + \frac{n-1}{z}\tilde p_{z}\right) + e^{-\tau}\left( \left.\p_\tau\right\vert_z \tilde p + z\tilde p_z - \tilde p \right).
\end{align*}

Recall that $\phi(y, \tau)$ and $y(\phi, \tau)$ denote the functions along the flow which are inverse to each other. Define 
\begin{align*}
y^{(0)}(\phi) & :=c-a\log(2n-2-\phi^2),\\
\phi^{(0)}(y) & :=\sqrt{2n-2-e^{\frac{1}{a}(c-y)}}.
\end{align*}
Let $\lambda^{(0)}(\phi) :=  -1/ y^{(0)}(\phi)$. By the uniformity in the construction of the initial hypersurface and the barriers in terms of $\lambda_0$ and $\lambda^{\pm}$ , we have as $\tau\to \infty$, 
$$\lambda(\phi, \tau)\to \lambda^{(0)}(\phi)$$
locally uniformly for $\phi\in [0, \sqrt{2n-2})$ where $\tilde A=c-a\log(2n-2)$ as in Lemma 7.1 of \cite{AV97}. In particular, we obtain uniform closeness to the barriers on the initial hypersurface by direct construction, whereas in \cite{AV97} the estimates use an Exit Lemma (cf. \cite[Lemma 3.1]{AV97}) and the geometric information of the neck region (i.e. perturbing the neck with Hermite polynomials) in their construction, which is not available in our case. Therefore, 
$$y(\phi, \tau)\to y^{(0)}(\phi)$$
locally uniformly for $\phi\in [0, \sqrt{2n-2})$. 

We then prove the following result corresponding to Lemma 7.2 in \cite{AV97}.  

\begin{lemma}[Type-II blow-up] \label{lem:type2}
Recall the function $\tilde P$ defined in \eqref{eq:tildeP} which forms part of a formal solution to MCF. We have the following asymptotic behaviour of $\tilde p$:
\begin{align}
\lim\limits_{\tau\nearrow\infty} \left( \tilde p(z,\tau) - \tilde p(0,\tau) \right) = \frac{1}{a} \tilde P\left( a z \right)
\end{align}
uniformly on compact $z$ intervals.
\end{lemma}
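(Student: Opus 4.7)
The plan is to prove Lemma \ref{lem:type2} by a compactness-uniqueness argument in the spirit of [AV97, Lemma 7.2]. The equation $\mathcal{B}[\tilde p]=0$ rearranges to
\begin{align*}
\frac{\tilde p_{zz}}{1+\tilde p_z^2}+(n-1)\frac{\tilde p_z}{z}-a=-e^{-\tau}\!\left(\left.\p_\tau\right\vert_z \tilde p+z\tilde p_z-\tilde p\right),
\end{align*}
so, at least formally, as $\tau\nearrow\infty$ the right-hand side vanishes and any limit satisfies the ODE solved by $\frac{1}{a}\tilde P(az)$ subject to $q(0)=q_z(0)=0$. I would proceed in three steps: (i) establish a priori $\tau$-uniform bounds on $q(z,\tau):=\tilde p(z,\tau)-\tilde p(0,\tau)$ and its derivatives on compact $z$-intervals; (ii) extract a $C^{2}_{loc}$ subsequential limit along $\tau_k\nearrow\infty$; (iii) identify the limit via uniqueness of the limit ODE.

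For the a priori bounds, convexity of $\Gamma_t$ (preserved by MCF) gives $y_{\phi\phi}\geqslant 0$, hence $q_{zz}\geqslant 0$, and smoothness plus rotational symmetry at the umbilical tip forces $q(0,\tau)=q_z(0,\tau)=0$, so $q\geqslant 0$. For the upper bound I would exploit the freedom in tuning the barrier parameters $c^{\pm}\to c$: the structural part of the interior barriers satisfies $\tilde p^{\pm}_{int}(z,\tau)-\tilde p^{\pm}_{int}(0,\tau)=F^{\pm}(z)/(A^{\pm})^2+O(\tau e^{-\tau})=\frac{1}{a}\tilde P(A^{\pm}z)+O(\tau e^{-\tau})$, which is bounded on any compact $z$-interval uniformly in $\tau$. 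Combined with the local uniform convergence $\lambda(\phi,\tau)\to\lambda^{(0)}(\phi)$ established just before the lemma, this squeezes $q(z,\tau)$ into a bounded window around $\frac{1}{a}\tilde P(az)$ for $\tau$ large. Standard interior parabolic regularity (Krylov--Safonov and Schauder estimates) applied to the quasilinear uniformly parabolic PDE $\mathcal{B}[\tilde p]=0$ on cylinders $\{|z|\leqslant R\}\times[\tau-1,\tau]$ then upgrades the $C^0$ bound to $C^{2,\alpha}_{loc}$ bounds on $q$ and a bound on $\p_\tau q$, all uniform in $\tau$.

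Arzel\`a--Ascoli then yields, along any sequence $\tau_k\nearrow\infty$, a further subsequence with $q(\cdot,\tau_k)\to q_\infty$ in $C^{2}_{loc}$. Passing to the limit in the displayed equation, every factor in the parentheses on the right is uniformly bounded from Step (i), and the ``shift'' terms $e^{-\tau}\tilde p(0,\tau)=y(0,\tau)-\tilde A$ and $e^{-\tau}\p_\tau \tilde p(0,\tau)=\p_\tau(y(0,\tau)-\tilde A)+(y(0,\tau)-\tilde A)$ both tend to $0$ from the local uniform convergence $y(\phi,\tau)\to y^{(0)}(\phi)$ and the parabolic bounds at the tip. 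Thus $q_\infty$ satisfies $\frac{(q_\infty)_{zz}}{1+(q_\infty)_z^2}+(n-1)\frac{(q_\infty)_z}{z}=a$ with $q_\infty(0)=(q_\infty)_z(0)=0$; uniqueness of this ODE initial value problem (compare \eqref{eq:tildeP}) forces $q_\infty(z)=\frac{1}{a}\tilde P(az)$. Since every subsequential limit coincides, the full limit exists and is uniform on compact $z$-intervals.

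The main obstacle is the $C^0$ bound on $q$ in Step (i): the barriers control $\lambda$ only up to $O(e^{-\tau})$, and the rescaling $\tilde p=e^{\tau}(y-\tilde A)$ threatens to amplify this gap into a diverging bound. The key point is that the barrier gap $\tilde p^{+}_{int}-\tilde p^{-}_{int}$ can be tightened by shrinking $c^{\pm}\to c$, while the structural parts $F^{\pm}(z)/(A^{\pm})^2$ already agree with the candidate limit $\frac{1}{a}\tilde P(az)$ up to an error of size $O(|A^{\pm}-A|)+O(\tau e^{-\tau})$. Together with the pointwise uniform convergence of $y$ to $y^{(0)}$ this yields the required $\tau$-uniform bound on $q$ on compact sets, after which the remaining regularity, compactness, and identification steps proceed by standard arguments; particular care is needed only with $e^{-\tau}\p_\tau q$, which is handled by Schauder estimates on unit $\tau$-intervals.
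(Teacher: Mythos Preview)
Your compactness-plus-identification strategy is natural, but there are two genuine gaps that the paper's argument is specifically designed to avoid.

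\textbf{The $C^0$ bound.} The $\lambda$-barriers do not give a uniform bound on $q(z,\tau)=\tilde p(z,\tau)-\tilde p(0,\tau)$. From $\lambda^-\leqslant\lambda\leqslant\lambda^+$ you only get $y(z,\tau)-y(0,\tau)$ trapped between cross-differences of the form $y^+(z,\tau)-y^-(0,\tau)$ and $y^-(z,\tau)-y^+(0,\tau)$, whose leading parts are $(1/A^+-1/A^-)$, a nonzero constant. After multiplying by $e^{\tau}$ this diverges. Your fix of ``shrinking $c^\pm\to c$'' does not apply to a \emph{fixed} solution: once the initial data and the pair $c^\pm$ are chosen, the comparison principle only traps the solution between \emph{those} barriers. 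The local uniform convergence $y\to y^{(0)}$ in $\phi$ is likewise too weak here, since for bounded $z$ one has $\phi=ze^{-\tau/2}\to 0$ and the relevant difference is $e^{\tau}\big(y(ze^{-\tau/2},\tau)-y(0,\tau)\big)$, an $e^{\tau}\cdot o(1)$ indeterminate. The paper instead passes to $q:=\tilde p_z$, changes time to $s=e^\tau$, and runs a \emph{new} barrier argument for the $q$-equation with barriers $Q_\lambda(z)=\tilde P'(\lambda z)$ on the parabolic region $\{0\leqslant z\leqslant\sqrt{\eta s}\}$, using the convergence $y\to y^{(0)}$ only to supply the boundary value $q(\sqrt{\eta s},s)=\sqrt{s}\,[y^{(0)}_\phi(\sqrt{\eta})+o(1)]$. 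This yields honest linear bounds $c_-z\leqslant \tilde p_z\leqslant c_+z$ uniformly in $\tau$.

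\textbf{The identification step is circular.} Evaluating $\mathcal{B}[\tilde p]=0$ at $z=0$ gives $e^{-\tau}\partial_\tau\tilde p(0,\tau)=n\,\tilde p_{zz}(0,\tau)-a+e^{-\tau}\tilde p(0,\tau)$. Hence along any sequence $\tau_k\to\infty$ with $q(\cdot,\tau_k)\to q_\infty$ in $C^2_{\mathrm{loc}}$, one has $e^{-\tau_k}\partial_\tau\tilde p(0,\tau_k)\to n\,(q_\infty)_{zz}(0)-a$, not $0$. Carrying this through, the limiting ODE is
\[
\frac{(q_\infty)_{zz}}{1+(q_\infty)_z^2}+(n-1)\frac{(q_\infty)_z}{z}=n\,(q_\infty)_{zz}(0),
\]
so $q_\infty$ is a bowl soliton of \emph{some} speed $c=n\,(q_\infty)_{zz}(0)$, and the conditions $q_\infty(0)=(q_\infty)_z(0)=0$ give $q_\infty=\frac{1}{c}\tilde P(cz)$, which is consistent for every $c>0$. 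Equivalently, your claim $\partial_\tau y(0,\tau)\to 0$ is the statement $q_{zz}(0,\tau)\to a/n$, i.e.\ the tip curvature is exactly $a(T-t)^{-1}$; that is precisely what the lemma is meant to establish. The paper resolves this non-uniqueness by first proving $\tilde p_z(z,s)/z\to a/(n-1)$ as $z\to\infty$ (from the overlap with the exterior region), and then showing that any subsequential limit $q_\infty(z,s)$ of $\tilde p_z(\cdot,s+s_l)$ solves the limiting parabolic equation and is pinned down as the equilibrium $Q_a=\tilde P'(a\,\cdot)$ by this asymptotic at infinity. Integrating in $z$ then gives the lemma.
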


\begin{proof}[Proof of Lemma \ref{lem:type2}]

By the Fundamental Theorem of Calculus and $\tilde P(0)=0$, it is enough to show that $\tilde p_z(z,\tau)$ converges uniformly to $\tilde{P}'\left(a z \right)$ as $\tau\to\infty$ for bounded $z\geqslant 0$. To do this, it is useful to set a new time parameter 
$$s=e^{\tau}.$$ 
In terms of $s$, $\tilde p$ satisfies the PDE
\begin{align}\label{eq:p_s}
\left. \p_s \right\vert_z \tilde p & = \frac{\tilde{p}_{zz}}{1+\tilde{p}^2_z} + \frac{n-1}{z}\tilde p_{z} - a + \frac{1}{s} (\tilde p - z\tilde p_z).
\end{align}
For simplicity of notations, we further define 
\begin{align}\label{eq:def-q}
q(z,s)&:=\tilde{p}_z(z,\tau)
\end{align}
satisfying $\mathcal{P}[q]=0$, where
\begin{align}
\mathcal{P}[q] & = \frac{\p q}{\p s} + \frac{1}{s} z q_z - \frac{\p}{\p z}\left( \frac{q_z}{1+q^2} + \frac{n-1}{z} q \right).
\end{align} 
We need to prove the convergence of $q(z, s)$. Note that equations \eqref{eq:p_s} and \eqref{eq:P[q]} are of the same type as equations (7.13) and (7.14) in \cite{AV97} and equations (5.1) and (5.2) in \cite{IW19}. In particular, the coefficient $1$ of the term $\frac{1}{s}zq_z$ in equation \eqref{eq:P[q]} can be related to the corresponding coefficients $\frac{m-1}{m-2}$ in \cite{AV97} and $\frac{2\gamma+1}{4\gamma}$ in \cite{IW19}, respectively. Indeed, we have
\begin{align*}
\lim\limits_{m\to\infty}\frac{m-1}{m-2}& = 1 = \lim\limits_{\gamma\to 1/2}\frac{2\gamma+1}{4\gamma}. 
\end{align*} Therefore, the rest of the proof in \cite[pp.51--58]{AV97} applies to our case \emph{mutatis mutandis}. For the convenience of readers and independent interest, the argument for the convergence of $q$ is summarised in Appendix \ref{q_conv_appx}. 
\end{proof}

Lemma \ref{lem:type2} implies that a smooth convex MCF solution expressed in terms of $y(z,\tau)$ satisfies the following asymptotics: on a compact $z$ interval (in the interior region), as $\tau\nearrow\infty$, 
\begin{align*}
y (z,\tau) & = \tilde A + e^{-\tau} \tilde{p}(0,\tau) + e^{-\tau} \frac{1}{a} \tilde P\left( a z \right)\\
& = y(0,\tau) + e^{-\tau} \frac{1}{a} \tilde P\left( a z \right).
\end{align*}
So Item (2) of Theorem \ref{thmmain} is proved. This expansion is indeed valid for higher order derivatives in light of higher order estimates involved in the proof of Lemma \ref{lem:type2}.

Item (2) implies that at $t\nearrow T$, our MCF solution necessarily blows up at the rate predicted by the formal solution $e^{-\tau} \tilde P\left( (a z \right)/a$ (cf. Section \ref{formal}), for which $\mathfrak{R}\leqslant 1$ (cf. \cite[Lemma 3.5]{ADS19}) if $z\in[0, R_1]$. In particular, at the tip, $k_1=k_n=a(T-t)^{-1}$. If $z=u(T-t)^{-1}\geqslant R_1$, then
\begin{align*}
\kappa_1=u^{-1}(1+u_x^2)^{-1/2} \leqslant R_1^{-1}(T-t)^{-1}.
\end{align*}
It then follows from Lemmata \ref{init_smooth} and \ref{tip} that $\mathfrak{R}=k_n/k_1\leqslant CR_1^{-3}$, and hence
\begin{align*}
\kappa_n \leqslant CR_1^{-3} k_1 \leqslant  C R_1^{-4}(T-t)^{-1}< a(T-t)^{-1}.
\end{align*}
So the highest curvature of this MCF solutions occurs at the tip and blows up at the rate $(T-t)^{-1}$, thus proving Item (1) of Theorem \ref{thmmain}.

Therefore, Theorem \ref{thmmain} is proven.
\end{proof}

\begin{remark}
For the class of MCF solutions under consideration, the asymptotic cylindrical condition is given by a precise rate which is preserved under MCF. This provides a more accurate asymptotic behaviour towards spatial infinity than that from the main result in \cite{SS14} for this particular class of solutions. 
\end{remark}

%%%%%%%%%%%%%%%%%%%%%%%%%%%%%%

\appendix
\section{Proof of Lemma \ref{init_smooth}}\label{init_smooth_appx}
We need to check the following: 
\begin{align*}
u(x_0)=0, ~~u_x>0, ~~u_{xx}<0 \quad \text{and}\quad \mathfrak{R}\leqslant CR_1^{-3}
\end{align*}
We verify these conditions first on the unsmoothed initial data and then on the smoothed initial data. By symmetry, we only consider $z\geqslant 0$.

\begin{itemize}
\item 
In the region $|z|\in[0,R_1]$, the initial condition is defined to be a scaled and translated copy of the bowl soliton, which is a convex non-collapsed ancient solution to MCF. It follows from the geometry of the bowl soliton that for $z\in(0,R_1)$, we have $u_x>0$, $u_{xx}>0$ and $\mathfrak{R}\leqslant 1$. In particular, the bound on the ratio $\mathfrak{R}$ follows from \cite[Lemma 3.5]{ADS19}.

\item In the region for which $|\phi|=(T-t_0)^{1/2}|z|\in [R_1 e^{-\tau_0/2}, \sqrt{2(n-1)})$, we can use $y$ instead of $x$ since it is merely a translation for our consideration. Recall that $\widehat\lambda=-1/y$ and $z=\phi(T-t_0)^{-1/2}=u(T-t_0)^{-1}$, so we have 
\begin{align*}
\widehat\lambda(z)=\widehat\lambda\bigl((T-t_0)^{-1/2}\phi\bigr)\widehat\lambda\bigl((T-t_0)^{-1}u\bigr)=-\frac{1}{y};
\end{align*}
whence it follows that
\begin{align*}
u_x &= u_y = \frac{(T-t_0)^{1/2}}{y^2\widehat\lambda_\phi} = \frac{T-t_0}{y^2\widehat\lambda_z},\\
u_{xx}& = u_{yy} = -\frac{2(T-t_0)^{1/2}}{y^3\widehat\lambda_\phi}-\frac{(T-t_0)^{1/2}\widehat\lambda_{\phi\phi}}{y^4\widehat\lambda^3_\phi} = -\frac{2(T-t_0)}{y^3\widehat\lambda_z}-\frac{(T-t_0)\widehat\lambda_{zz}}{y^4\widehat\lambda^3_z},
\end{align*}
and
\begin{align}
C+Cu^2_x+u\cdot  u_{xx} & = C+Cu^2_y+u\cdot u_{yy} \notag\\
&= C+(T-t_0)\left(\frac{C\widehat\lambda^4}{\widehat\lambda^2_\phi}+\frac{2\phi\widehat\lambda^3}{\widehat\lambda_\phi}
-\frac{\phi\widehat\lambda^4 \widehat\lambda_{\phi}}{\widehat\lambda^3_\phi}\right) \label{eq:R-quantity1}\\
&= C+(T-t_0)^2\left(\frac{C\widehat\lambda^4}{\widehat\lambda^2_z}+\frac{2z\widehat\lambda^3}{\widehat\lambda_z}
-\frac{z\widehat\lambda^4 \widehat\lambda_{zz}}{\widehat\lambda^3_z}\right).  \label{eq:R-quantity2}
\end{align}

In this region, we have 
$$\widehat\lambda=-\frac{1}{c-a\log\bigl(2n-2-(T-t_0)z^2\bigr)}<0$$ 
since $c>a\log(2n-2)$, and so 
$$\widehat\lambda_z=\frac{2az(T-t_0)\widehat\lambda^2}{2n-2-(T-t_0)z^2}>0,$$
and 
\begin{equation}
\begin{split} 
\widehat\lambda_{zz}
&= \frac{2a(T-t_0)\widehat\lambda^2\bigl(2n-2-(T-t_0)z^2\bigr)+8a^2z^2(T-t_0)^2\widehat\lambda^3+4az^2(T-t_0)^2\widehat\lambda^2}{\bigl(2n-2-(T-t_0)z^2\bigr)^2} \\
&= \frac{4(n-1)a(T-t_0)\widehat\lambda^2+8a^2z^2(T-t_0)^2\widehat\lambda^3+2az^2(T-t_0)^2\widehat\lambda^2}{\bigl(2n-2-(T-t_0)z^2\bigr)^2} \\
&= \frac{2a(T-t_0)\widehat\lambda^2 [2(n-1)+(T-t_0)z^2(4a\widehat\lambda+1)]}{\bigl(2n-2-(T-t_0)z^2\bigr)^2}.  
\end{split} \nonumber
\end{equation} 

So we have
\begin{align*}
\mathfrak{R} & = -\frac{u\cdot u_{yy}}{1+u_y^2}\leqslant -u\cdot u_{yy} \leqslant C(a,c)R_1^{-3},\\
u_y&=\frac{T-t_0}{y^2\widehat\lambda_z}>0,\\
u_{yy}
&= -\frac{2(T-t_0)}{y^3\widehat\lambda_z}-\frac{(T-t_0)\widehat\lambda_{zz}}{y^4\widehat\lambda^3_z} \\
&= \frac{\widehat\lambda\bigl(2n-2-(T-t_0)z^2\bigr)}{az}\\
&\quad - \frac{\bigl(2n-2-(T-t_0)z^2\bigr)[2(n-1)+(T-t_0)z^2(4a\widehat\lambda+1)]}{4a^2z^3(T-t_0)} \\
&= \frac{\widehat\lambda\bigl(2n-2-(T-t_0)z^2\bigr)}{az} - \frac{\widehat\lambda\bigl(2n-2-(T-t_0)z^2\bigr)}{az}\\
&\quad  -\frac{\bigl(2n-2-(T-t_0)z^2\bigr)[2(n-1)+(T-t_0)z^2]}{4a^2z^3(T-t_0)} \\
&= -\frac{\bigl(2n-2-(T-t_0)z^2\bigr)[2(n-1)+(T-t_0)z^2]}{4a^2z^3(T-t_0)} \\
&< 0,
\end{align*}
where the key is that the terms involving $\widehat\lambda$ cancel each other. Furthermore, the calculation of (\ref{eq:R-quantity2}) can be continued and we obtain
\begin{equation} 
\begin{split}
C+Cu^2_x+u\cdot u_{xx} 
&\geqslant C-\frac{(T-t_0)\bigl(2n-2-(T-t_0)z^2\bigr)}{2a^2} \\
&\quad\quad\quad +\frac{\bigl(2n-2-(T-t_0)z^2\bigr)\bigl(2n-2-(T-t_0)z^2-2az(T-t_0)\bigr)}{2a^2z\bigl(c-a\log\bigl(2n-2-(T-t_0)z^2\bigr)\bigr)}.
\end{split}
\end{equation}
For any small $\epsilon>0$, if we take $t_0$ sufficiently close to $T$, then we have
$$\vline\frac{(T-t_0)\bigl(2n-2-(T-t_0)z^2\bigr)}{2a^2}\vline<\epsilon,$$
$$\frac{\bigl(2n-2-(T-t_0)z^2\bigr)\bigl(2n-2-(T-t_0)z^2-2az(T-t_0)\bigr)}{2a^2z\bigl(c-a\log\bigl(2n-2-(T-t_0)z^2\bigr)\bigr)}>-\epsilon$$
uniformly in the region. So $C+Cu^2_x+u\cdot u_{xx} >0$. 

\item To smooth the corner at $z=R_1$, we proceed as follows.

At the connecting point $z=R_1$, we have  
$$\widehat\lambda(R_1)=C+e^{-\tau_0}F(R_1)=-\frac{1}{c-a\log\bigl(2n-2-(T-t_0)R_1^2\bigr)}.$$
Consider the first derivative $u_y=\frac{T-t_0}{y^2\widehat\lambda_z}$. For $z\to R_1^-$,  
$$\widehat\lambda_z=(T-t_0)z\left(\frac{aA^2}{n-1} - \frac{A^2}{az^2}+O\left(z^{-3}\right)\right);$$
then for $z\to R_1^+$, we obtain
\begin{align*}
\widehat\lambda_z & =\frac{2a(T-t_0)\widehat\lambda^2}{2n-2-(T-t_0)z^2}\cdot z\\
& = (T-t_0)z a \widehat\lambda^2 \left( \frac{1}{n-1} + \frac{(T-t_0)z^2}{(2n-2)^2} + O\left( (T-t_0)^2z^4\right)\right).
\end{align*} 
For $t_0 \to T$, since $\widehat\lambda(R_1) =  -A + O(T-t_0)$, 
\begin{align*}
\frac{2a\widehat\lambda^2}{2n-2-(T-t_0)z^2}\to \frac{aA^2}{n-1}>\frac{aA^2}{n-1} - \frac{A^2}{az^2}+O\left(z^{-3}\right).
\end{align*}
Thus by taking any $R_1$ sufficiently large, at the connecting point, $\widehat\lambda$ from the interior is smaller than that from the exterior, and so $u_y$ from the tip side is larger than that from the other side. In fact, the scale of $\widehat\lambda_z$ is $(T-t_0)R_1$, and so the scale of $u_y$ is $\frac{1}{R_1}$. Hence, we can ensure that the graph of $u(y)$ is smooth, increasing and concave; i.e., $u_y>0$ and $u_{yy}<0$. More precisely, fix $\zeta\in(0,1)$ and consider $z\in I_\zeta : = [(1-\zeta) R_1, (1+\zeta)R_1]$. Since
\begin{align*}
u_y|_{z=(1+\zeta)R_1} \leqslant \lim\limits_{z\to R_1^-} u_y\leqslant u_y|_{z=(1-\zeta)R_1},
\end{align*}
we have for $t_0$ sufficiently close to $T$,
\begin{align*}
u_y|_{z=(1-\zeta)R_1} - u_y|_{z=(1+\zeta)R_1} & \geqslant u_y|_{z=(1-\zeta)R_1} - \lim\limits_{z\to R_1^-} u_y, \\
& = \left[a(1+\zeta)R_1 \left( \frac{1}{n-1} + \frac{(T-t_0)(1+\zeta)^2 R_1^2}{(2n-2)^2} + O\left( (T-t_0)^2 R_1^4\right)\right)\right]^{-1} \\
&\quad - \frac{\widehat\lambda^2(R_1)}{\left(\frac{aA^2}{n-1}R_1 - \frac{A^2}{a R_1}+O\left(R_1^{-3}\right) \right)}\\
& \geqslant \left[a(1+\zeta)R_1\left(\frac{1}{n-1} + O(T-t_0)\right)\right]^{-1}\\
& - \quad \frac{\left(A^2 + O(T-t_0)\right)}{\frac{aA^2}{n-1}R_1}\left(1+\frac{1}{a^2R_1^2}+O(R_1^{-3})\right)\\
& = -\frac{\zeta}{1+\zeta}\frac{n-1}{aR_1} + O\left(R_1^{-3}\right) + O\left((T-t_0)R_1^{-1}\right).
\end{align*}

Therefore, on the interval $z=y\sqrt{T-t_0}\in I_\zeta$, we have
\begin{align*}
0> u_{yy}& \geqslant -\frac{1}{2a(1+\zeta)R_1^2} + O\left(R_1^{-4}\right) + O\left((T-t_0)R_1^{-2}\right),
\end{align*}
and hence at $t=t_0$,
\begin{align*}
C+Cu_y^2 + u\cdot u_{yy} & \geqslant C+Cu_y^2|_{z=(1+\zeta)R_1} + u \cdot u_{yy}\\
& \geqslant C - C(\zeta, a, n)R_1^{-2}.
\end{align*}
As $\epsilon$, $a$ and $n$ are fixed, by choosing $R_1$ large enough (noting that in particular both $100 C R_1^{-4}<a$ and Lemma \ref{patch} still hold true), we have
\begin{align*}
C+Cu_y^2 + u\cdot u_{yy} > 0
\end{align*}
on the interval $z=y\sqrt{T-t_0}\in I_\zeta = [(1-\zeta) R_1, (1+\zeta)R_1]$ where we smooth out the corner at $z=R_1$.

If we smooth $\widehat\lambda$ to $\lambda$, then we have $|\lambda_z|\leqslant |\widehat\lambda_z |$ on $I_\zeta$. In particular, we interpret $\widehat\lambda_z(R_1)$ to be $\max\left\{\lim\limits_{z\to R_1^-}\widehat\lambda_z(z), \lim\limits_{z\to R_1^+}\widehat\lambda_z(z)\right\}$. The estimates on $\lambda_z$ and $\widehat\lambda_z$ imply that on $I_\zeta$ we have $$|\lambda-\widehat\lambda|\leqslant(T-t_0)\zeta C(n,a,R_1).$$ By construction, if we define $\delta:=\inf_{I_\zeta} \{\lambda^+ - \widehat\lambda_0, \widehat\lambda_0 -  \lambda^- \}> 0$, then by choosing $t_0$ sufficiently close to $T$, we have $|\lambda-\widehat\lambda|\leqslant \delta/10$ and hence $\lambda^+ < \lambda < \lambda^- $.
\end{itemize}

\section{Convergence of $q$ in the proof of Lemma \ref{lem:type2}}\label{q_conv_appx}
Recall the definition of $q$ in \eqref{eq:def-q}.
\begin{itemize}
\item {\bf Step 1}: The linear bounds for $q$. 

\noindent {\it Claim:} for any fixed small $\eta>0$, we have 
$$q(\sqrt{\eta s}, s)=\sqrt{s}[y^{(0)}_\phi(\sqrt{\eta})+o(1)]$$ 
where $o(1)$ is as small as needed for sufficiently large $s$. 

\noindent {\it Proof of Claim:} by the definitions, 
$$q(z, s)=p_z(z, \tau)=e^\tau y_\phi\frac{\p \phi}{\p z}=e^{\tau/2}y_\phi=\sqrt{s}y_\phi(\phi, \tau),$$
and $\phi=z/\sqrt{s}$. So we conclude  
$$q(\sqrt{\eta s}, s)=\sqrt{s}y_\phi(\sqrt{\eta}, \tau)=\sqrt{s}[y^{(0)}_\phi(\sqrt{\eta})+o(1)]$$
where $o(1)$ is as small as needed for sufficiently large $\tau$ (or $s$) and the second equality makes use of the convexity of $y$ and its convergence to $y^{(0)}$ mentioned above. The claim is justified. 

\vspace{0.1in}

The evolution equation for $q(z, s)$ is 
\begin{equation}
\label{eq:P[q]}
\frac{\p q}{\p s}=-\frac{1}{s} z q_z+\frac{\p}{\p z}\left(\frac{q_z}{1+q^2} + \frac{n-1}{z} q \right).
\end{equation}
Let $Q_\lambda(z)=\tilde P'(\lambda z)$ and by \eqref{eq:tildeP}, we know that $Q_\lambda$ satisfies
$$\frac{(Q_\lambda)_z}{1+(Q_\lambda)^2} + \frac{n-1}{z}Q_\lambda=\lambda$$
and so we can eliminate the ``$\frac{\partial}{\partial_z}\left(\cdots\right)$'' term on the right hand side of the above evolution equation. Then in the same way as in \cite{AV97}, we can use $Q_\lambda$ to construct supersolutions and subsolutions for the above evolution equation in the region 
$$\Sigma_\eta=\{(z, s)~|~0\leqslant z\leqslant \sqrt{\eta s}, ~s\geqslant s_0\},$$ 
which is justified first on the boundary. From that, we conclude 
$$c_-z\leqslant q(z, s) \leqslant c_+z$$ 
which are the linear bounds for $q$. 

\item {\bf Step 2}: The convergence of $q(z, s)/z$.

For convenience of notations, we set 
$$G(z, s):=\frac{q(z, s)}{z}.$$
By the previous claim, we have
\begin{equation}
\begin{split}
G(\sqrt{\eta s}, s)
&= \frac{q(\sqrt{\eta s}, s)}{\sqrt{\eta s}} \\
&= \frac{1}{\sqrt{\eta}}[y^{(0)}_\phi(\sqrt{\eta})+o(1)] \\
&= \frac{1}{\sqrt{\eta}}\left(\frac{2a\sqrt{\eta}}{2n-2-\eta}+o(1)\right).
\end{split} \nonumber
\end{equation}
Thus as $s\to \infty$, 
$$G(\sqrt{\eta s}, s)\to \frac{2a}{2n-2-\eta},$$
and we conclude the special convergence 
$$\lim_{\eta\to 0}\lim_{s\to 0}G(\sqrt{\eta s}, s)=\frac{a}{n-1}$$
Now as for Lemma 7.3 in \cite{AV97}, we can justify the convergence more generally using the barrier argument. More precisely, we have that for any $\delta>0$, there exist $N$, $s_\delta$ and $\eta_\delta\in [0, \eta]$ such that 
$$\vline G(z, s)-\frac{a}{n-1}\vline\leqslant \delta$$ 
for $z\geqslant N$, $s\geqslant s_\delta$ and $z^2\leqslant \eta_\delta s$.

\item {\bf Step 3}: The convergence of $q$. 

We obtain the higher order estimates as in Lemma 7.4 of \cite{AV97}. Then using the standard sequence picking method, we find that as $l\to \infty$, $s_l\to \infty$ such that locally uniformly, we have 
$$q(z, s+s_l)\to q_\infty(z, s),$$
where the limit $q_\infty$ is a solution of
\begin{align}\label{eq:eq-q} 
\frac{\p q}{\p s} & =\frac{\p}{\p z}\left(\frac{q_z}{1+q^2} + \frac{n-1}{z} q \right).
\end{align}
in light of \eqref{eq:P[q]}. By the linear bounds and convergence from the previous steps, we have
$$c_-z\leqslant q_\infty(z, s)\leqslant c_+z, \quad\lim_{z\to\infty}\frac{q_\infty(z, s)}{z}=\frac{a}{n-1}.$$ 

Pick $\lambda_\infty$ such that 
$$\lim_{z\to\infty}\frac{Q_{\lambda_\infty}(z)}{z}=\frac{a}{n-1},$$
where $Q_{\lambda_\infty}(z)=\tilde P'(\lambda_\infty z)$ is an equilibrium solution of equation \eqref{eq:eq-q}. By \eqref{eq:asymptotics-tildeP}, we know $\lambda_\infty=a$. In fact, $q_\infty(z, s)=Q_{\lambda_\infty}(z)$ by the same argument in \cite[p.57--58]{AV97}. From this (uniqueness of sequential limit), we see that 
$$q(z, s)\to Q_{\lambda_\infty}(z), ~\text{as}~s\to\infty.$$
In other words, locally uniformly in $z$, 
$$p_z(z, \tau)\to \tilde P'(az), ~\text{as}~\tau\to\infty,$$
which concludes the proof of Lemma \ref{lem:type2}. 
\end{itemize}

\section{Discussion of Remark \ref{comp_rmk}}\label{comp_rmk_appx}
Despite the fact that the piecewise smooth upper barrier $\lambda^{+}$ and the piecewise smooth lower barrier $\lambda^{-}$ defined by \eqref{eq:upperbarrier} and \eqref{eq:lowerbarrier} respectively, are not smooth, the comparison principle (Proposition \ref{comparison}) applies to them. This is because, by Lemma \ref{patch}, the non-smooth points (i.e., ``corners) of $\lambda^{+}$ and $\lambda^{-}$ are unique for each $\tau$ and have the jumps of the first derivatives in favourable directions. It follows that the point of first contact between the subsolution $\lambda^-$ or supersolution $\lambda^+$ and the MCF solution (with appropriate boundary conditions as discussed in Proposition \ref{comparison}) is necessarily away from the corners, and thus these functions are smooth at this point of first contact. For completeness, we now provide more details.

Consider the case of $\lambda^+$ and $\lambda^-$, allowing for all scenarios. At the corners, it follows from Lemma \ref{patch} that for either $z$ or $\phi$, the spatial derivatives satisfy  
$$(\lambda^+_{int})'>(\lambda^+_{ext})', \quad (\lambda^-_{int})'<(\lambda^-_{ext})'.$$
Now let us consider all the possibilities of the first contact point where it is the corner for at least one of the functions involved. In the following discussion, we only make use of the spatial extremal property and so it is just the spatial minimum of $\lambda^+ - \lambda^-$ under consideration. We have the following possibilities.

\begin{itemize}
\item[(i)] It is the corner $p$ for $\lambda^+$ and before (in the sense that its $\phi$-coordinate or $z$-coordinate is closer to zero) the corner $q$ of $\lambda^-$. So it is the minimum of $\lambda^+_{int}-\lambda^-_{int}$ before $p$ and the minimum of $\lambda^+_{ext}-\lambda^-_{int}$ after $p$. Then at $p$, $(\lambda^+_{int})'\leqslant (\lambda^-_{int})'$ and $(\lambda^+_{ext})'\geqslant (\lambda^-_{int})'$, and so we have 
$$(\lambda^+_{ext})'\geqslant (\lambda^-_{int})'\geqslant (\lambda^+_{int})'>(\lambda^+_{ext})'$$
which is a contradiction. 

\item[(ii)] It is the corner $p$ for $\lambda^+$ and after (in the sense that its $\phi$-coordinate or $z$-coordinate is farther away from zero) the corner $q$ of $\lambda^-$. So it is the minimum of $\lambda^+_{int}-\lambda^-_{ext}$ before $p$ and the minimum of $\lambda^+_{ext}-\lambda^-_{ext}$ after $p$. Then at $p$, $(\lambda^+_{int})'\leqslant (\lambda^-_{ext})'$ and $(\lambda^+_{ext})'\geqslant (\lambda^-_{ext})'$. We have 
$$(\lambda^-_{ext})'\geqslant (\lambda^+_{int})'> (\lambda^+_{ext})'\geq(\lambda^-_{ext})'$$
which is a contradiction. 

\item[(iii)] It is the corner $q$ for $\lambda^-$ and before the corner $p$ of $\lambda^+$. So it is the minimum of $\lambda^+_{int}-\lambda^-_{int}$ before $q$ and the minimum of $\lambda^+_{int}-\lambda^-_{ext}$ after $q$. Then at $q$, $(\lambda^+_{int})'\leqslant (\lambda^-_{int})'$ and $(\lambda^+_{int})'\geqslant (\lambda^-_{ext})'$, and so we have 
$$(\lambda^-_{int})'\geqslant (\lambda^+_{int})'\geqslant (\lambda^-_{ext})'>(\lambda^-_{int})'$$
which is a contradiction. 

\item[(iv)] It is the corner $q$ for $\lambda^-$ and after the corner $p$ of $\lambda^+$. So it is the minimum of $\lambda^+_{ext}-\lambda^-_{int}$ before $q$ and the minimum of $\lambda^+_{ext}-\lambda^-_{ext}$ after $q$. Then at $q$, $(\lambda^+_{ext})'\leqslant (\lambda^-_{int})'$ and $(\lambda^+_{ext})'\geqslant (\lambda^-_{ext})'$, and so we have 
$$(\lambda^-_{int})'\geqslant (\lambda^+_{ext})'\geqslant (\lambda^-_{ext})'>(\lambda^-_{int})'$$
which is a contradiction. 

\item[(v)] It is the corner $p$ for $\lambda^+$ and also the corner of $\lambda^-$. So it is the minimum of $\lambda^+_{int}-\lambda^-_{int}$ before $p$ and the minimum of $\lambda^+_{ext}-\lambda^-_{ext}$ after $p$. Then at $p$, $(\lambda^+_{int})'\leqslant (\lambda^-_{int})'$ and $(\lambda^+_{ext})'\geqslant (\lambda^-_{ext})'$, and so we have 
$$(\lambda^-_{int})'\geqslant (\lambda^+_{int})'> (\lambda^+_{ext})'\geq(\lambda^-_{ext})'>(\lambda^-_{int})'$$
which is a contradiction. 	
\end{itemize}

So the first contact point has to be a smooth point. Furthermore, the spatial derivatives involved in the proof of Proposition \ref{comparison} are uniformly bounded in light of the explicit forms of $\lambda^\pm_{int}$ and $\lambda^\pm_{ext}$. Hence we know that Proposition \ref{comparison} is applicable for the piecewise smooth functions in our consideration for barriers. 

\section{Discussion of Remark \ref{ratio_R_rmk}}\label{ratio_R_appx}
We discuss the condition $\mathfrak{R}\leqslant C$, or equivalently, $C+Cu^2_x+u\cdot u_{xx}\geqslant 0$, on the initial hypersurface. Recall $u(x_0)=0$, $u_x>0$ and $u_{xx}<0$. We note that at the tip, although $\mathfrak{R}=1$, it does not follow that $1+u^2_x+u\cdot u_{xx}=0$ since the denominator of $\mathfrak{R}$ is $1+u^2_x$, which is infinity at the tip. 

The inverse of the function $r=u(x)$, i.e., the function $x=x(u)$, is a smooth even function. It is easy to see the leading term of $u(x)$ is $(x-x_0)^\eta$ for $\eta\in (0, \frac{1}{2}]$, and so the functions $\kappa_1. \cdots, \kappa_n$ have the same leading term $(x-x_0)^{1-2\eta}$, and so $\mathfrak{R}$ is continuous up to $x_0$. In fact, $\eta=1/2$, for otherwise the principal curvatures at the tip $x=x_0$ are all zero, contradicting that the hypersurface is strictly convex.

Near the tip, this can be reduced to a condition on the expansion of $u(x)$ or $x(u)$. Namely, after translation in the $x$-direction so that the tip occurs at $x=0$, we can let
\begin{align*}
u &= \alpha x^{\frac{1}{2}}+\beta x^{\frac{3}{2}}+O(x^{\frac{5}{2}}).
\end{align*}
Then we have 
$$u_x=\frac{\alpha}{2}x^{-\frac{1}{2}}+\frac{3\beta}{2}x^{\frac{1}{2}}+O(x^{\frac{3}{2}}), \quad u_{xx}=-\frac{\alpha}{4}x^{-\frac{3}{2}}+\frac{3\alpha}{4}x^{-\frac{1}{2}}+O(x^{\frac{1}{2}}),$$
and so
$$1+u^2_x+u\cdot u_{xx}=1+2\alpha \beta + O(x).$$
Hence, we impose the relation $1+2\alpha\beta=0$ so that $1+u^2_x+u\cdot u_{xx}=0$, or equivalently, $\mathfrak{R}=1$, at the tip where $x=0$. For example, if we let $\alpha=1$, then $1+2\alpha\beta=0$ implies that $\beta=-1/2$, and so we write out the next term in the expansion of $u(x)$:
$$u=x^{\frac{1}{2}}-\frac{1}{2}x^{\frac{3}{2}}+\gamma x^{\frac{5}{2}}+O(x^{\frac{7}{2}}),$$
whence it follows that
\begin{align*}
u_x&=\frac{1}{2}x^{-\frac{1}{2}}-\frac{3}{4}x^{\frac{1}{2}}+\frac{5\gamma}{2}x^{\frac{3}{2}}+O(x^{\frac{5}{2}}),\\ u_{xx}&=-\frac{1}{4}x^{-\frac{3}{2}}-\frac{3}{8}x^{-\frac{1}{2}}+\frac{15\gamma}{4}x^{\frac{1}{2}}+O(x^{\frac{3}{2}}).
\end{align*}
Then we obtain
\begin{align*}
C+Cu^2_x+u\cdot u_{xx}=\frac{C-1}{4x} + \frac{C-1}{4} + \frac{3+56\gamma+C(9+40\gamma)}{16}x + O\left(x^2\right).
\end{align*}
Hence we just need to require 
\begin{align}\label{eq:requirement}
C\geqslant 1 \quad\text{and}\quad \frac{3+56\gamma+C(9+40\gamma)}{16}\geqslant 6\gamma+\frac{3}{4}>0
\end{align}
to guarantee that $\mathfrak{R}\leqslant C$ near the tip, which is an open condition.  

Towards the cylindrical end, both $u_x$ and $u_{xx}$ tend to $0$ while $u$ stays bounded, so $C+Cu^2_x+u\cdot u_{xx}$ tends to $C$ and $R$ tends to $0$. In any case, the requirement \eqref{eq:requirement} is not restrictive on the asymptotic cylindrical ends.

%%%%%%%%%%%%%%%%%%%%%%%%%%%%%%

%\vspace{12pt}
\bibliography{mcf-type2_bib}
\bibliographystyle{alpha}

%\begin{thebibliography}{9}
%\end{thebibliography}
\end{document}